\documentclass[aop,preprint]{imsart}

\RequirePackage[OT1]{fontenc}
\RequirePackage{amsthm,amsmath,natbib}
\RequirePackage[colorlinks]{hyperref}
\RequirePackage{hypernat}

\RequirePackage[OT1]{fontenc}
\RequirePackage{amsthm,amsmath,natbib}
\RequirePackage[colorlinks]{hyperref}
\RequirePackage{hypernat}

\numberwithin{equation}{section}

\usepackage{amssymb}
\usepackage{comment}
\usepackage{appendix}
\usepackage{graphicx}
\usepackage{subfigure}
\usepackage{enumerate}
\usepackage{comment}

\input xy
\xyoption{all}

\startlocaldefs

\newtheorem{theorem}{Theorem}[section]
\newtheorem{lemma}[theorem]{Lemma}
\newtheorem{proposition}[theorem]{Proposition}

\newtheorem{assumption}[theorem]{Assumption}
\newtheorem{remark}[theorem]{Remark}

\newcommand{\E}{\mathbf{E}}

\newcommand{\N}{\mathbf{N}}
\newcommand{\Z}{\mathbf{Z}}
\newcommand{\p}{\mathbf{P}}

\newcommand{\CC}{\mathcal {C}}

\newcommand{\CF}{\mathcal {F}}
\newcommand{\CG}{\mathcal {G}}

\newcommand{\CL}{\mathcal {L}}

\newcommand{\CU}{\mathcal {U}}

\newcommand{\CX}{\mathcal {X}}

\newcommand{\one}{\mathbf{1}}

\newcommand{\ol}{\overline}

\newcommand{\hit}{{\rm hit}}


\newcommand*{\wt}{\widetilde}

\newcommand*{\be}{\begin{equation}}
\newcommand*{\ee}{\end{equation}}
\newcommand*{\ba}{\begin{aligned}}
\newcommand*{\ea}{\end{aligned}}
\newcommand*{\barr}{\begin{array}{c}}
\newcommand*{\earr}{\end{array}}

\newcommand*{\ind}{\mathbf{1}}
\newcommand*{\trel}{t_{{\rm rel}}}
\newcommand*{\tu}{t_{{\rm u}}}
\newcommand*{\tcov}{t_{{\rm cov}}}
\newcommand*{\thit}{t_{{\rm hit}}}
\newcommand*{\tmix}{t_{{\rm mix}}}
\newcommand*{\TV}{{\rm TV}}

\endlocaldefs

\begin{document}
\begin{frontmatter}
\title{Uniform mixing time for Random Walk on Lamplighter Graphs}
\runtitle{Uniform mixing time for lamplighter walks}

\begin{aug}

\runauthor{J\'ulia Komj\'athy, Jason Miller, and Yuval Peres}

\author{J\'ulia Komj\'athy \thanks{J. Komj\'athy was supported by the grant
 KTIA-OTKA  $\#$ CNK 77778, funded by the Hungarian National Development Agency (NF\"U)  from a
source provided by KTIA and also supported by the grant T\'AMOP -
$4.2.2.B-10/1--2010-0009$."}}
\address{Technical University of Budapest,
\tt{komyju@math.bme.hu}}

\author{Jason Miller}
\address{Microsoft Research
\tt{jmiller@microsoft.com}}

\author{Yuval Peres}
\address{Microsoft Research
\tt{peres@microsoft.com}}

\affiliation{Technical University of Budapest and Microsoft Research}

\end{aug}
\date{\today}

\begin{abstract}
Suppose that $\CG$ is a finite, connected graph and $X$ is a lazy random walk on $\CG$.  The lamplighter chain $X^\diamond$ associated with $X$ is the random walk on the wreath product $\CG^\diamond = \Z_2 \wr \CG$, the graph whose vertices consist of pairs $(f,x)$ where $f$ is a labeling of the vertices of $\CG$ by elements of $\Z_2$ and $x$ is a vertex in $\CG$.  There is an edge between $(f,x)$ and $(g,y)$ in $\CG^\diamond$ if and only if $x$ is adjacent to $y$ in $\CG$ and $f(z) = g(z)$ for all $z \neq x,y$.  In each step, $X^\diamond$ moves from a configuration $(f,x)$ by updating $x$ to $y$ using the transition rule of $X$ and then sampling both $f(x)$ and $f(y)$ according to the uniform distribution on $\Z_2$; $f(z)$ for $z \neq x,y$ remains unchanged.  We give matching upper and lower bounds on the uniform mixing time of $X^\diamond$ provided $\CG$ satisfies mild hypotheses.  In particular, when $\CG$ is the hypercube $\Z_2^d$, we show that the uniform mixing time of $X^\diamond$ is $\Theta(d 2^d)$.  More generally, we show that when $\CG$ is a torus $\Z_n^d$ for $d \geq 3$, the uniform mixing time of $X^\diamond$ is $\Theta(d n^d)$ uniformly in $n$ and $d$.  A critical ingredient for our proof is a concentration estimate for the local time of random walk in a subset of vertices.
\end{abstract}

\begin{keyword}[class=AMS]
\kwd[Primary ]{60J10} 
\kwd{60D05}
\kwd{37A25}
\end{keyword}

\begin{keyword}
\kwd{Random walk, uncovered set, lamplighter walk, mixing time.}
\end{keyword}

\end{frontmatter}

\maketitle

\section{Introduction}

Suppose that $\CG$ is a finite graph with vertices $V(\CG)$ and edges $E(\CG)$, respectively.  Let $\CX(\CG) = \{f \colon V(\CG) \to \Z_2\}$ be the set of markings of $V(\CG)$ by elements of $\Z_2$.  The wreath product $\CG^\diamond = \Z_2 \wr \CG$ is the graph whose vertices are pairs $(f,x)$ where $f \in \CX(\CG)$ and $x \in V(\CG)$.  There is an edge between $(f,x)$ and $(g,y)$ if and only if $\{x,y\} \in E(\CG)$ and $f(z) = g(z)$ for all $z \notin \{x,y\}$.  Suppose that $P$ is a transition matrix for a Markov chain on $\CG$.  The lamplighter walk $X^\diamond$ (with respect to the transition matrix $P$) is the Markov chain on $\CG^\diamond$ which moves from a configuration $(f,x)$ by
\begin{enumerate}
\item picking $y$ adjacent to $x$ in $\CG$ according to $P$, then
\item updating each of the values of $f(x)$ and $f(y)$ independently according to the uniform measure on $\Z_2$.
\end{enumerate}
The lamp states at all other vertices in $\CG$ remain fixed.  It is easy to see that if $P$ is ergodic and reversible with stationary distribution $\pi_P$ then the unique stationary distribution of $X^\diamond$ is the product measure
\[ \pi\big((f,x)\big)= \pi_P(x) 2^{-|\CG|},\]
and $X^\diamond$ is itself reversible.  In this article, we will be concerned with the special case that $P$ is the transition matrix for the \emph{lazy random walk} on $\CG$ in order to avoid issues of periodicity.  That is, $P$ is given by
\begin{equation}
\label{eqn::lazy_rw_definition}
P(x,y) = \begin{cases} \frac{1}{2} \text{ if } x = y,\\ \frac{1}{2d(x)} \text{ if } \{x,y\} \in E(\CG), \end{cases}
\end{equation}
for $x,y \in V(\CG)$ and where $d(x)$ is the degree of $x$.

\begin{figure}
\includegraphics[width=0.35\textwidth]{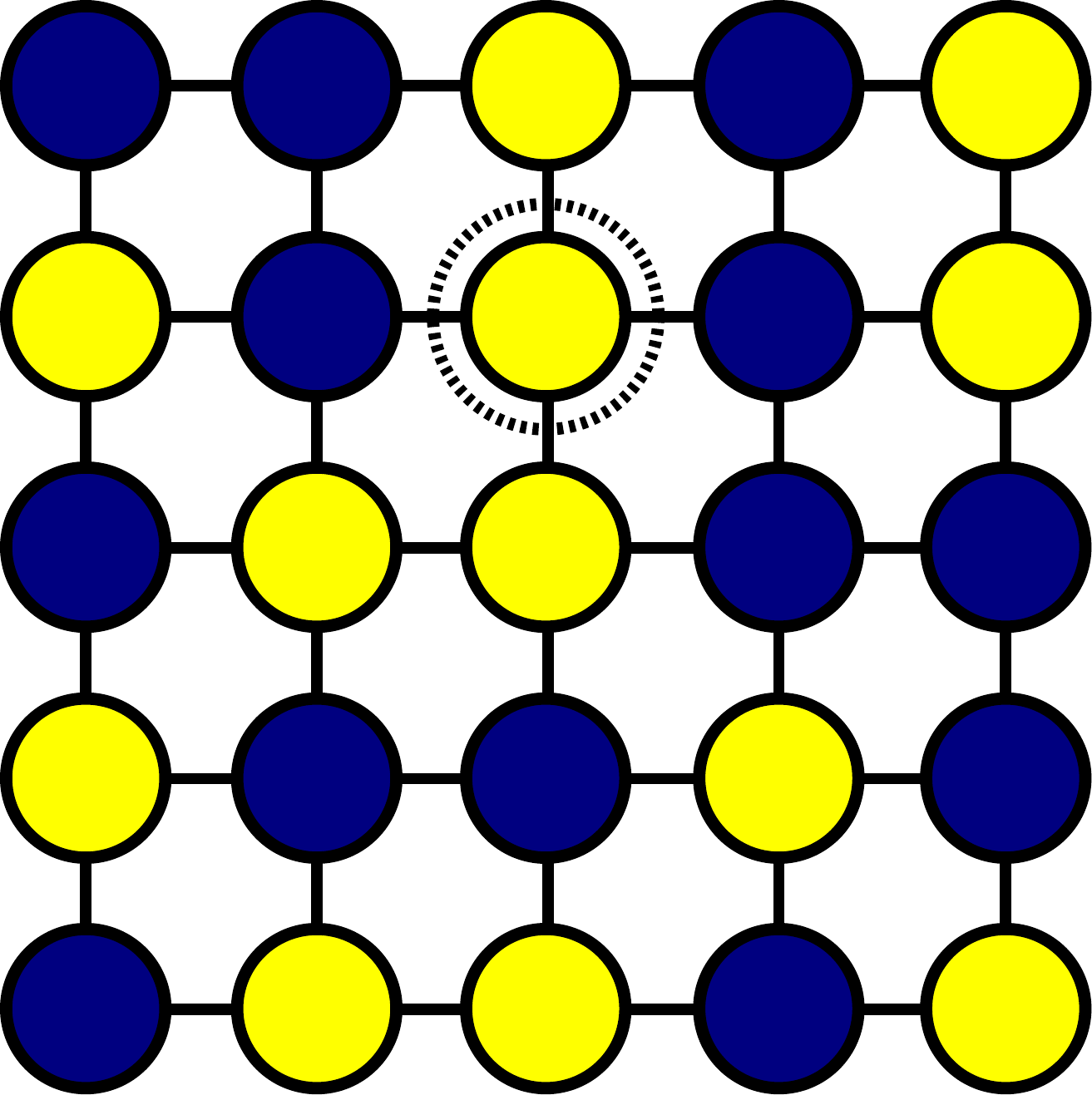}
\caption{A typical configuration of the lamplighter over a $5 \times 5$ planar grid.  The colors indicate the state of the lamps and the dashed circle gives the position of the lamplighter.}
\end{figure}

\subsection{Main Results}

Let $P$ be the transition kernel for lazy random walk on a finite, connected graph $\CG$ with stationary distribution $\pi$.  The $\epsilon$-\emph{uniform mixing time} of $\CG$ is given by
\begin{equation}
\label{eqn::tu_definition}
 \tu(\epsilon,\CG) = \min\left\{ t \geq 0: \max_{x,y \in V(\CG)} \left|\frac{P^t(x,y)-\pi(y)}{\pi(y)}\right| \leq \epsilon \right\}.
\end{equation}
Throughout, we let $\tu(\CG) = \tu((2e)^{-1},\CG)$.  The main result of this article is a general theorem which gives matching upper and lower bounds of $\tu(\CG^\diamond)$ provided $\CG$ satisfies several mild hypotheses.  One important special case of this result is the hypercube $\Z_2^d$ and, more generally, tori $\Z_n^d$ for $d \geq 3$.  These examples are sufficiently important that we state them as our first theorem.

\begin{theorem}
\label{thm::cycles}
There exists constants $C_1,C_2 > 0$ such that
\[ C_1 \leq \frac{\tu((\Z_2^d)^\diamond)}{d 2^d} \leq C_2 \text{ for all } d.\]
More generally,
\[ C_1 \leq \frac{\tu((\Z_n^d)^\diamond)}{d n^{d+2}} \leq C_2 \text{ for all } n \geq 2 \text{ and } d \geq 3.\]
\end{theorem}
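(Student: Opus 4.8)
The plan is to reduce the uniform mixing of $X^\diamond$ to a statement about the range of the driving walk, and then, for $\CG=\Z_n^d$ and $\CG=\Z_2^d$, to pin down the extremal obstruction as confinement of the driving walk to a slab. Write $X_0,X_1,\dots$ for the lazy walk on $\CG$, let $\CR_t=\{X_0,\dots,X_t\}$ be its range and $\CU_t=V(\CG)\setminus\CR_t$ the uncovered set. Conditioning on the trajectory and using that the lamp at each site of $\CR_t$ has been independently re‑randomized gives, for all $(f,x),(g,y)$,
\[
\frac{(P^\diamond)^t\!\big((f,x),(g,y)\big)}{\pi\big((g,y)\big)}
=\frac1{\pi_P(y)}\,\E_x\!\Big[\ind\{X_t=y\}\,2^{|\CU_t|}\,\ind\{g\equiv f\text{ on }\CU_t\}\Big].
\]
Since $\CG=\Z_n^d$ (and hence $\CG^\diamond$) is vertex‑transitive, $\pi$ is uniform; and since the spectrum of $P^\diamond$ is, with multiplicity, the union over $S\subseteq V(\CG)$ of the spectrum of the sub‑stochastic restriction $P_{V(\CG)\setminus S}$ padded with zeros, $P^\diamond$ is positive semidefinite, so the uniform distance is attained on the diagonal:
\[
\Bigl\|\tfrac{(P^\diamond)^{s}((f,x),\cdot)}{\pi}-1\Bigr\|_\infty
=|V(\CG)|\,\E_x\!\big[\ind\{X_{s}=x\}2^{|\CU_{s}|}\big]-1
=\sum_{i\ge2}\la_i(P)^{s}+|V(\CG)|\!\!\sum_{\emptyset\ne S\subseteq V(\CG)\setminus\{x\}}\!\!\p_x\big[X_s=x,\ \tau_S>s\big].
\]
The first sum is the $\ell^\infty$ distance of the base chain, negligible once $s\gg\tu(\CG)=O(\trel(\CG)\log|V(\CG)|)$; everything thus reduces to the $s$ at which the last sum crosses $(2e)^{-1}$. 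I claim this threshold is $\asymp\trel(\CG)|V(\CG)|$, which is $dn^{d+2}$ for $\Z_n^d$ ($d\ge3$) and $d\,2^d$ for $\Z_2^d$, using $\trel(\Z_n^d)\asymp dn^2$ and $\trel(\Z_2^d)\asymp d$.

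\textit{Lower bound.} Restrict the expectation to the event that $X$ stays in a slab $B=\Z_n^{d-1}\times I$ with $I\subsetneq\Z_n$ an interval of length $\asymp n$ (for $\Z_2^d$ take $B=\Z_2^{d-1}\times\{0\}$); then $|\CU_s|\ge|V(\CG)|-|B|$ on that event, and for $x$ in the bulk of $B$,
\[
|V(\CG)|\,\E_x\!\big[\ind\{X_{s}=x\}2^{|\CU_{s}|}\big]\ \ge\ |V(\CG)|\cdot2^{\,|V(\CG)|-|B|}\,(P_B)^{s}(x,x)\ \gtrsim\ 2^{\,|V(\CG)|-|B|}\,\la_1(P_B)^{s},
\]
using $(P_B)^{s}(x,x)\gtrsim\la_1(P_B)^{s}/|B|$ and $|V(\CG)|/|B|\ge1$. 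The slab has a single confining direction, so $1-\la_1(P_B)\asymp\frac1d|I|^{-2}\asymp\trel(\CG)^{-1}$, while $|V(\CG)|-|B|$ is a fixed fraction of $|V(\CG)|$; the right side is therefore $\gtrsim2^{\,c|V(\CG)|}\e{-s/\trel(\CG)}$, which exceeds $1+(2e)^{-1}$ whenever $s\le c'\trel(\CG)|V(\CG)|$. Optimising over $|I|$ gives $C_1$. (This is where $d\ge3$ enters: it lets a slab be at once macroscopic and of relaxation time $\asymp\trel(\CG)$.)

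\textit{Upper bound.} Bound $\p_x[X_s=x,\tau_S>s]=(P_{V(\CG)\setminus S})^{s}(x,x)\le\la_1(P_{V(\CG)\setminus S})^{s}\le\e{-s\,\phi(|V(\CG)|-|S|)}$, where $\phi(m):=\inf_{|W|=m}(1-\la_1(P_W))$, group the $S$'s by $|S|=k$, and invoke a discrete Faber--Krahn (isoperimetric) inequality for $\Z_n^d$: $\phi$ is the profile realised by sub‑boxes $\prod_i[0,\ell_i]$, and in particular $\phi(m)\asymp\trel(\CG)^{-1}$ throughout the macroscopic range $m\asymp|V(\CG)|$ (being strictly larger below it). Then
\[
|V(\CG)|\!\!\sum_{\emptyset\ne S\subseteq V(\CG)\setminus\{x\}}\!\!\p_x[X_s=x,\tau_S>s]\ \le\ |V(\CG)|\sum_{k=1}^{|V(\CG)|}\binom{|V(\CG)|}{k}\,\e{-s\,\phi(|V(\CG)|-k)},
\]
and the summand's exponent is maximised at $k=\Theta(|V(\CG)|)$, where it equals $\Theta(|V(\CG)|)-\Theta(s/\trel(\CG))$ — the entropy of choosing $S$ minus the Faber--Krahn decay — which is negative with room to spare once $s\ge C_2\trel(\CG)|V(\CG)|$; the ranges $k=o(|V(\CG)|)$ (where $\binom{|V(\CG)|}{k}$ is only polynomial) and $k=|V(\CG)|-o(|V(\CG)|)$ (where $\phi$ is large) contribute less and are controlled at a smaller time. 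Thus $s=C_2\trel(\CG)|V(\CG)|$ with $C_2$ a large absolute constant forces the sum below $(2e)^{-1}$, matching the lower bound.

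The substantive difficulty — and the reason the matching of $C_1$ and $C_2$ uniformly in $n$ and $d$ is delicate — is the interplay in the upper bound between the isoperimetric profile $\phi$ of $\Z_n^d$ and the count of sets $S$ realising (up to constants) a prescribed value of $1-\la_1(P_{V(\CG)\setminus S})$; equivalently, one needs a concentration estimate showing that the local time of $X$ vanishes on a prescribed $k$‑set with probability at most $\exp(-cs/\trel(\CG))$ raised to a shape‑dependent power, uniformly over the shape of the set and over $d$. This is precisely the local‑time concentration bound flagged in the abstract, and obtaining it with constants that do not degrade as $d\to\infty$ is the heart of the argument.
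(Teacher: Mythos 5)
Your reduction of $\tu(\CG^\diamond)$ to the diagonal quantity $|V(\CG)|\,\E_x[\ind\{X_s=x\}2^{|\CU_s|}]-1$ and its expansion as a sum over avoided sets $S$ is correct (it is the spectral decomposition of $P^\diamond$ into sectors indexed by $S$, with restricted operators $P_{V\setminus S}$, all PSD by laziness), and your slab lower bound is sound --- it is essentially the argument behind \cite[Theorem 1.4]{PR04}, which the paper simply cites. Note that the paper does not prove Theorem~\ref{thm::cycles} directly: it verifies the three Green's-function/hitting-time hypotheses of Assumption~\ref{assump::main} for $\Z_n^d$ (Propositions~\ref{prop::low_degree_green_bound} and \ref{prop::high_degree_green_bound}) and invokes the general Theorem~\ref{thm::main}, whose upper bound works with $\E[2^{|\CU(t)|}]$ via the local-time concentration bound (Proposition~\ref{prop::local_time}) and a deconcentration step (Lemma~\ref{lem::deconc}), never with a sum over all subsets $S$.

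The genuine gap is in your upper bound. It rests entirely on the assertion that the spectral profile $\phi(m)=\inf_{|W|=m}(1-\la_1(P_W))$ of $\Z_n^d$ is realised, up to constants \emph{uniform in $n$ and $d$}, by sub-boxes. This is not a known off-the-shelf input and is at least as hard as what it is meant to prove: (i) the Cheeger route gives only $1-\la_1(P_W)\gtrsim h(W)^2$, which for a slab yields $\asymp(d n)^{-2}$ rather than $\asymp(dn^2)^{-1}$, losing a factor of $d$ and hence giving $d^2n^{d+2}$ instead of $dn^{d+2}$; a dimension-free Faber--Krahn inequality must be proved directly. (ii) In the regime $|S|=k=o(|V(\CG)|)$, which you dismiss as ``controlled at a smaller time,'' the extremal $W=V(\CG)\setminus S$ is the complement of a \emph{clustered} set, and $1-\la_1(P_W)$ is governed by the capacity of $S$ (equivalently by the Green's function sums $G^*(k)$ of the paper), not by the isoperimetry of $W$; e.g.\ the generic bound $1-\la_1(P_{V\setminus S})\gtrsim \pi(S)/\trel$ coming from Proposition~\ref{prop::local_time} is too weak here by a factor of $\log|\CG|$ once you pay the entropy $\binom{|V(\CG)|}{k}$. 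So the union bound over $S$ does not close without precisely the shape-dependent capacity/Green's-function estimates that constitute the bulk of Sections~\ref{sec::examples} and \ref{sec::estimates}. You correctly identify this as ``the heart of the argument,'' but it is exactly the part that is missing.
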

\noindent Prior to this work, the best known bound \cite{PR04} for $\tu( (\Z_2^d)^\diamond)$ was
\[ C_1 d 2^d \leq \tu( (\Z_2^d)^\diamond) \leq C_2  \log(d) d 2^d\]
for $C_1,C_2 > 0$.

In order to state our general result, we first need to review some basic terminology from the theory of Markov chains.  The \emph{relaxation time} of $P$ is
\begin{equation}
\label{eqn::trel_definition}
 \trel(\CG) = \frac{1}{1-\lambda_0}
\end{equation}
where $\lambda_0$ is the second largest eigenvalue of $P$.  The \emph{maximal hitting time} of $P$ is
\begin{equation}
\label{eqn::thit_definition}
 \thit(\CG) = \max_{x,y\in V(\CG)} \E_x[\tau_y],
\end{equation}
where $\tau_y$ denotes the first time $t$ that $X(t) = y$ and $\E_x$ stands for the expectation under the law in which $X(0) = x$.  The Green's function $G(x,y)$ for $P$ is
\begin{equation}
\label{eqn::green_definition}
 G(x,y) = \E_x \left[ \sum_{t=0}^{\tu(\CG)} \one_{\{X(t) = y\}} \right] = \sum_{t=0}^{\tu(\CG)} P^t(x,y),
\end{equation}
i.e. the expected amount of time $X$ spends at $y$ up to time $\tu$ given $X(0) = x$.  For each $1 \leq n \leq |\CG|$, we let
\begin{equation}
\label{eqn::g_star_definition}
 G^*(n) = \max_{\stackrel{S \subseteq V(\CG)}{|S| = n}} \max_{z \in S} \sum_{y \in S} G(z,y).
\end{equation}
This is the maximal expected time $X$ spends in a set $S \subseteq V(\CG)$ of size $n$ before the uniform mixing time.  This quantity is related to the hitting time of subsets of $V(\CG)$.  Finally, recall that $\CG$ is said to be vertex transitive if for every $x,y \in V(\CG)$ there exists an automorphism $\varphi$ of $\CG$ with $\varphi(x) = y$.  Our main result requires the following hypothesis.

\begin{assumption}
\label{assump::main}
$\CG$ is a finite, connected, vertex transitive graph and $X$ is a lazy random walk on $\CG$.  There exists constants $K_1,K_2,K_3 > 0$ such that
\begin{enumerate}[(A)]
\item \label{assump::main::hitting} $\thit(\CG) \leq K_1|\CG|$,
\item \label{assump::main::green_exponent} $2K_2 (5/2)^{K_2} (G(x,y))^{K_2} \leq \exp\left(- \frac{\tu(\CG)}{\trel(\CG)} \right)$,
\item \label{assump::main::green_ratio} $G^*(n^*) \leq K_3(\trel(\CG) + \log|\CG|)/(\log n^*)$
\end{enumerate}
where $n^* = 4 K_2 \tu(\CG) / G(x,y)$ for $x,y \in V(\CG)$ adjacent.
\end{assumption}

The general theorem is:

\begin{theorem}
\label{thm::main}
Let $\CG$ be any graph satisfying Assumption \ref{assump::main}.  There exists constants $C_1,C_2$ depending only on $K_1,K_2,K_3$ such that
\begin{align}
 C_1  \leq \frac{\tu(\CG^\diamond)}{|\CG| \big( \trel(\CG) +\log|\CG|\big)}  \leq C_2 \label{eqn::tu_bound}
\end{align}
\end{theorem}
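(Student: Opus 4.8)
The plan is to reduce the uniform mixing of the lamplighter chain $X^\diamond$ to two nearly‑independent pieces: the mixing of the lamplighter position via $X$ on $\CG$, and the randomization of the lamp configuration, which is governed by how the walk $X$ accumulates local time on subsets of $V(\CG)$. Write a state of $X^\diamond$ as $(f,x)$. Since at time $t$ the lamp at a vertex $z$ is uniform and independent of everything else precisely when $X$ has visited $z$ at least once during $[0,t]$ (so that it was last re‑sampled), the conditional law of $f$ given the trajectory $(X(s))_{s\le t}$ is: uniform on the visited set $\CU_t = \{X(s): 0\le s\le t\}$ and frozen to the initial value off $\CU_t$. The uniform distance of $X^\diamond$ from stationarity is then controlled by (i) the uniform distance of $X$ from $\pi$ on $\CG$, which happens in time $\tu(\CG)$ by definition, and (ii) the event that the unvisited set $\CG\setminus\CU_t$ is nonempty, together with an $L^\infty$‑type control of the density of the lamp configuration on the visited set. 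Concretely I will show $\| P^t_\diamond((f,x),\cdot) / \pi_\diamond - 1\|_\infty$ is bounded by a sum of a term decaying like $\exp(-ct/\trel(\CG))$ coming from the $X$‑marginal (spectral gap on $\CG^\diamond$, whose relevant eigenvalue is inherited from $P$) and a term measuring the probability that some vertex is still uncovered, which forces the target time to be of order the cover time, i.e. of order $|\CG|(\trel(\CG)+\log|\CG|)$.

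For the \textbf{upper bound} I would take $t = C_2 |\CG|(\trel(\CG)+\log|\CG|)$ and argue in two stages. First run the chain for $\tu(\CG)$ steps from $(f,x)$: by \eqref{eqn::tu_definition} the position is within $(2e)^{-1}$ uniform distance of $\pi$, and by a standard fact this persists (is only improved) under further steps; moreover after the first step every later lamp value that gets re‑sampled is exactly uniform. Second, I must show that after the remaining $\approx t$ steps the set of \emph{unvisited} vertices is empty with very high probability, and — crucially — that the \emph{conditional law of the lamps on the visited set} is close to uniform in the strong $L^\infty$ sense, uniformly over the (random) trajectory. The first is the classical cover‑time estimate: by Matthews / Aldous and hypothesis \eqref{assump::main::hitting}, $\thit(\CG)\le K_1|\CG|$ gives $\tcov(\CG)\lesssim K_1|\CG|\log|\CG|$, and combined with the relaxation‑time contribution (needed when $\trel$ dominates $\log|\CG|$) one gets $\p[\CU_t\neq V(\CG)]$ exponentially small once $t \ge C_2|\CG|(\trel(\CG)+\log|\CG|)$; the role of hypothesis \eqref{assump::main::green_exponent} is to make the per‑vertex non‑coverage probability summable over $|\CG|$ vertices with room to spare. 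The second point is the more delicate one and is exactly where the "concentration estimate for the local time of random walk in a subset of vertices" advertised in the abstract enters: one must control, uniformly, the Radon–Nikodym derivative of the lamp law, and this is where hypotheses \eqref{assump::main::green_exponent} and \eqref{assump::main::green_ratio} are used, with $G^*(n^*)$ bounding the expected time spent in the "bad" set of $n^* = 4K_2\tu(\CG)/G(x,y)$ vertices on which a large deviation in local time could spoil uniformity. Assembling: uniform distance $\le (2e)^{-1}\cdot(\text{position term}) + (\text{tiny coverage term}) \le e^{-1}$, giving $\tu(\CG^\diamond)\le C_2|\CG|(\trel(\CG)+\log|\CG|)$.

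For the \textbf{lower bound}, with $t = c\,|\CG|(\trel(\CG)+\log|\CG|)$ for small $c$, I would exhibit a state $(f,y)$ whose relative density $P^t_\diamond((f,x),(f,y))/\pi_\diamond(f,y)$ is far from $1$. Two obstructions give the two summands. If $t \ll |\CG|\log|\CG|$, then with probability bounded away from $0$ at least one vertex $z$ of $\CG$ is uncovered by $X$ up to time $t$ (lower bounds on cover time, again from $\thit$ estimates and vertex‑transitivity / second‑moment method for the uncovered set); on that event the lamp at $z$ is still at its initial value, so the configuration $f$ that disagrees with the initial marking at that single vertex has conditional probability $0$, while $\pi_\diamond$ assigns it mass $\pi_P(y)2^{-|\CG|}>0$ — hence the relative error is $\ge$ a constant, which by choosing appropriately (summing over the many candidate target configurations, or working with an $L^2$/$\ell^\infty$ duality) forces $\tu(\CG^\diamond)\gtrsim |\CG|\log|\CG|$. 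If instead $\trel(\CG)$ dominates, then $X$ itself is not $\ve$‑mixed in time $\ll |\CG|\trel(\CG)$ in the uniform metric (the standard spectral lower bound $\tu(\CG)\gtrsim \trel(\CG)\log(\cdot)$, and here the point is that the mixing of the position of $X^\diamond$ inherits the spectral gap of $P$), so the $x$‑marginal alone keeps $P^t_\diamond/\pi_\diamond$ away from $1$. Taking the max of the two scenarios yields $\tu(\CG^\diamond)\ge C_1|\CG|(\trel(\CG)+\log|\CG|)$.

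I expect the \textbf{main obstacle} to be the upper‑bound control of the lamp law in $L^\infty$ on the visited set: covering every vertex is easy, but ensuring that the joint law of the lamps is uniformly (not just in total variation) close to the product measure requires the local‑time concentration estimate, and threading hypotheses \eqref{assump::main::green_exponent}–\eqref{assump::main::green_ratio} through that estimate — in particular identifying $n^*$ as the right truncation scale and showing $G^*(n^*)$ yields the needed bound on excess local time over a subset — is the technical heart of the argument.
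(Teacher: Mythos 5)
There is a genuine gap in your upper bound, and it sits exactly at the point you flag as delicate. Conditionally on the trajectory of $X$ up to time $t$, the lamps on the visited set are \emph{exactly} i.i.d.\ uniform (each visited lamp was refreshed at its last visit), so there is nothing to control there in $L^\infty$; the Radon--Nikodym derivative problem lives entirely on the \emph{unvisited} set. As in \eqref{eqn::uniform_mix_expression}, the relative density of a target configuration $g$ is $\E_{(f,x)}\big[2^{|\CU(t)|}\one_{\{W\subseteq \CC(t)\}}\big]$: on the event $|\CU(t)|=k$ the conditional density is $2^{k}$, so uniform mixing requires $\E[2^{|\CU(t)|}]\le 1+\epsilon$, which is vastly stronger than your ``tiny coverage term'' $\p[\CU(t)\neq\emptyset]$ being small. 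A bound $\p[\CU(t)\neq\emptyset]\le\delta$ only gives $\E[2^{|\CU(t)|}]\le 1+\delta\,2^{|\CG|}$ in the worst case; one needs the tail $\p[|\CU(t)|>s]$ to decay strictly faster than $2^{-s}$, uniformly in $s$, so that $\sum_s 2^{s}\,\p[|\CU(t)|>s]$ is small. Your decomposition into ``position term plus coverage term'' therefore proves total variation mixing of the lamp configuration but not uniform mixing, which is the whole point of the theorem (and why the naive $\tcov$ upper bound from \cite{PR04} loses a $\log d$ factor on the hypercube).

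This super-exponential tail estimate is the technical heart of the paper's argument and is absent from your proposal: one decomposes the coverage process into stages $s_i$ interpolating between $|\CG|$ and $0$, shows via the local-time concentration (Proposition \ref{prop::local_time}) together with a deconcentration step (Lemma \ref{lem::deconc}, which is where hypothesis \eqref{assump::main::green_exponent} and the Green's function enter) that while $|\CU(t)|>n^*$ the walk removes $\tu(\CG)$ vertices per time block of length $\asymp \tu(\CG)|\CG|/s_i$ with failure probability $\exp(-c\,\tu/\trel)$, and then switches to a hitting-time argument (Lemma \ref{lem::hit_small_set}, using $G^*(n^*)$ and hypothesis \eqref{assump::main::green_ratio}) once $|\CU(t)|\le n^*$; a geometric stochastic domination in each regime yields $\p[|\CU(t)|>s_i]\le \exp\bigl(-a\,s_i(1+\log|\CG|/\trel)\bigr)$-type bounds that beat the $2^{s_i}$ weight. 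You correctly name the ingredients ($G^*(n^*)$, the local-time concentration, the scale $n^*$) but deploy them against the wrong quantity. Your lower bound sketch is fine in spirit, though the paper simply cites \cite[Theorem 1.4]{PR04} for it.
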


The lower bound is proved in \cite[Theorem 1.4]{PR04}.  The proof of the upper bound is based on the observation from \cite{PR04} that the uniform distance to stationarity can be related to $\E[ 2^{|\CU(t)|}]$ where $\CU(t)$ is the set of vertices in $\CG$ which have not been visited by $X$ by time $t$.  Indeed, suppose that $f$ is any initial configuration of lamps, let $f(t)$ be the state of the lamps at time $t$, and let $g$ be an arbitrary lamp configuration.  Let $W$ be the set of vertices where $f \neq g$.  Let $\CC(t) = V(\CG) \setminus \CU(t)$ be the set of vertices which have been visited by $X$ by time $t$.  With $\p_{(f,x)}$ the probability under which $X^\diamond(0) = (f,x)$, we have that
\[ \p_{(f,x)}[f(t) = g | \CC(t)] =  2^{-|\CC(t)|} \one_{\{ W \subseteq \CC(t)\}}.\]
Since the probability of the configuration $g$ under the uniform measure is $2^{-|\CG|}$, we therefore have
\begin{equation}
\label{eqn::uniform_mix_expression}
 \frac{\p_{(f,x)}[f(t) = g]}{2^{-|\CG|}} = \E_{(f,x)}\big[2^{|\CU(t)|} \one_{\{ W \subseteq \CC(t)\}} \big].
\end{equation}
The right hand side is clearly bounded from above by $\E[2^{|\CU(t)|}]$ (the initial lamp configuration and position of the lamplighter no longer matters).  On the other hand, we can bound \eqref{eqn::uniform_mix_expression} from below by
\[ \p_{(f,x)}[ W \subseteq \CC(t)] \geq \p[ |\CU(t)| = 0] \geq 1-(\E\big[2^{|\CU(t)|} \big] - 1).\]
Consequently, to bound $\tu(\epsilon,\CG^\diamond)$ it suffices to compute
\begin{equation}
\label{eqn::expmom}
 \min\{ t \geq 0 : \E[2^{|\CU(t)|}] \leq 1+\epsilon\}
\end{equation}
since the amount of time it requires for $X$ to subsequently uniformly mix after this time is negligible.

In order to establish \eqref{eqn::expmom}, we will need to perform a rather careful analysis of the process by which $\CU(t)$ is decimated by $X$.  The key idea is to break the process of coverage into two different regimes, depending on the size of $\CU(t)$.  The main ingredient to handle the case when $\CU(t)$ is large is the following concentration estimate of the local time
\[ \CL_S(t) = \sum_{s=0}^t \one_{\{X(s) \in S\}}\]
for $X$ in $S \subseteq V(\CG)$.

\begin{proposition}
\label{prop::local_time}
Let $\lambda_0$ be the second largest eigenvalue of $P$.  Assume $\lambda_0 \geq \tfrac{1}{2}$ and fix $S \subseteq V(\CG)$.  For $C_0 = 1/50$, we have that
\begin{equation}
\label{eqn::loctime_bound} \p_\pi\left[\CL_S(t) \leq  t \frac{\pi (S)}{2}\right] \leq \exp\left( - C_0  t  \frac{\pi(S)}{\trel(\CG)}\right).
\end{equation}
\end{proposition}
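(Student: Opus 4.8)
The plan is to bound the lower tail of the local time $\CL_S(t)$ via a standard exponential (Chernoff) moment argument, using reversibility and spectral decay to control the relevant matrix exponential. Write $\CL_S(t) = \sum_{s=0}^t \one_{\{X(s) \in S\}}$, and for $\theta > 0$ estimate $\p_\pi[\CL_S(t) \leq t\pi(S)/2] \leq \e{\theta t \pi(S)/2}\, \E_\pi\big[\e{-\theta \CL_S(t)}\big]$. The crux is therefore to obtain a good upper bound on the exponential moment $\E_\pi[\e{-\theta \CL_S(t)}]$. The natural tool is the Feynman--Kac / Kac-type formula: $\E_\pi[\e{-\theta \CL_S(t)}]$ equals $\langle \pi, (P_\theta)^{t+1}\one\rangle$ (up to the usual boundary-term bookkeeping), where $P_\theta$ is the substochastic operator $P_\theta g(x) = \e{-\theta \one_S(x)} (Pg)(x)$, or more symmetrically $D_\theta^{1/2} P D_\theta^{1/2}$ with $D_\theta = \mathrm{diag}(\e{-\theta \one_S(x)})$. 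Reversibility of $P$ with respect to $\pi$ makes $P_\theta$ self-adjoint on $L^2(\pi)$, so the exponential moment is controlled by the top eigenvalue $\beta(\theta)$ of $P_\theta$: one gets $\E_\pi[\e{-\theta \CL_S(t)}] \leq C \beta(\theta)^{t}$ for a harmless constant $C$.

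The next step is to show $\beta(\theta) \leq 1 - c\,\theta\,\pi(S)$ for small $\theta$, where $c$ reflects the spectral gap. Here is where the hypothesis $\lambda_0 \geq \tfrac12$ (equivalently $\trel \geq 2$, i.e. $1-\lambda_0 \leq \tfrac12$) enters: it guarantees that the spectral gap $1-\lambda_0$ is not larger than a constant, so that the perturbation $\theta \pi(S)$ can be compared cleanly to the gap. Concretely, I would use the variational characterization $\beta(\theta) = \sup_{\|g\|_{L^2(\pi)}=1} \langle g, P_\theta g\rangle_\pi = \sup \sum_x \pi(x) \e{-\theta\one_S(x)} g(x)(Pg)(x)$, split $\e{-\theta\one_S} = 1 - (1-\e{-\theta})\one_S$, and bound $\langle g, Pg\rangle_\pi \leq \lambda_0 + (1-\lambda_0)\langle g, \one\rangle_\pi^2$ together with a lower bound on the "mass on $S$'' term. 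The delicate point is that a test function $g$ can be chosen to nearly avoid $S$, making $\sum_x \pi(x)\one_S(x) g(x)^2$ tiny; to handle this one shows that if $g$ is essentially supported off $S$ then $g$ cannot be close to the top eigenfunction $\one$ of $P$, so the spectral gap forces $\langle g, Pg\rangle_\pi$ to be appreciably below $1$. Balancing these two mechanisms yields $\beta(\theta) \leq 1 - c\theta\pi(S)/\trel$ after optimizing, say, at $\theta$ of order $1$ (e.g. $\theta = 1$ or a small fixed constant), and then $\E_\pi[\e{-\theta\CL_S(t)}] \leq C\exp(-c\theta\pi(S) t/\trel)$.

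Plugging back in, $\p_\pi[\CL_S(t)\leq t\pi(S)/2] \leq C\exp(\theta t\pi(S)/2 - c\theta \pi(S) t/\trel)$; since $\trel \geq 2$ is not what makes this negative, I instead keep $\theta$ small and track constants so that the $\trel^{-1}$ term dominates — more precisely the clean statement comes from choosing $\theta$ a fixed small constant and absorbing the resulting $\exp(\theta t \pi(S)/2)$ into a slightly worse constant in the exponent, noting $1/\trel \leq 1/2$ so the loss is controlled, ending with the bound $\exp(-C_0 t\pi(S)/\trel)$ with $C_0 = 1/50$ after the arithmetic is made explicit. I expect the main obstacle to be the eigenvalue perturbation step: getting a \emph{one-sided} bound $\beta(\theta) \leq 1 - c\theta\pi(S)/\trel$ that is uniform over all subsets $S$ (including very small $S$, where $\pi(S)$ is tiny and the perturbation is correspondingly weak) requires the careful two-case test-function analysis above rather than a black-box eigenvalue inequality, and tracking the numerical constants down to $C_0 = 1/50$ will need the elementary bounds $1-\e{-\theta} \geq \theta/2$ for $\theta \in [0,1]$ and $\e{x} \leq 1+x+x^2$ for small $x$ to be deployed with some care.
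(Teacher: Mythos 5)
Your overall strategy (a Chernoff bound plus a Feynman--Kac representation of $\E_\pi[\e{-\theta\CL_S(t)}]$ via the top eigenvalue $\beta(\theta)$ of the tilted self-adjoint operator $D_\theta^{1/2}PD_\theta^{1/2}$) is the standard route to this kind of estimate; it is essentially the proof of the result the paper actually invokes as a black box, namely Theorem 1 of \cite{LP04}. The paper does not reprove that theorem: it rewrites the event in terms of $f=\ind_{S^c}$, quotes the explicit rate function $I$ from \cite{LP04}, bounds the quantity $\Delta$ using $\lambda_0\ge\tfrac12$, and evaluates the double integral $I(\mu+\epsilon)=\int\int(\sqrt{\Delta}\,y\ol{y})^{-1}$. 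So your route is self-contained where the paper's is a citation plus calculus, which is a legitimate alternative in principle.

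However, the final assembly of your argument has a genuine gap that would make the proof fail. With the eigenvalue bound in the form you state, $\beta(\theta)\le 1-c\theta\pi(S)/\trel$, the Chernoff computation gives an exponent $t\pi(S)\bigl(\theta/2-c\theta/\trel\bigr)$, which is \emph{positive} as soon as $\trel>2c$ --- exactly the regime of interest. The prefactor $\e{\theta t\pi(S)/2}$ cannot be ``absorbed into a slightly worse constant in the exponent'': it exceeds the claimed negative term by a factor of order $\trel$, and your observation that $1/\trel\le 1/2$ points in the wrong direction. Two corrections are needed. First, the perturbation bound must retain the full first-order decrease of the eigenvalue, e.g. $\beta(\theta)\le 1-(1-\e{-\theta})\pi(S)+C\theta^2\pi(S)/(1-\lambda_0)$, so that the decrease beats the tilt cost $\theta\pi(S)/2$ to first order and the spectral gap enters only the second-order (variance) term. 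Second, $\theta$ must then be chosen proportional to the spectral gap, $\theta\asymp 1-\lambda_0=1/\trel$, not as a fixed constant, so that the second-order correction is at most half the first-order gain; optimizing then yields the rate $c\,t\pi(S)/\trel$. This is precisely the information encoded in the Le\'on--Perron rate function (its second derivative is comparable to $(1-\lambda_0)/(y\ol{y})$), which is why the paper's computation of $I(\mu+\epsilon)$ as a double integral produces the correct $\trel^{-1}$ dependence directly.
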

Proposition \ref{prop::local_time} is a corollary of \cite[Theorem 1]{LP04}; we consider this sufficiently important that we state it here.  By invoking Green's function estimates, we are then able to show that the local time is not concentrated on a small subset of $S$.  The case when $\CU(t)$ is small is handled via an estimate (Lemma \ref{lem::hit_small_set}) of the hitting time $\tau_S = \min\{t \geq 0 : X(t) \in S\}$ of $S$.

\subsection{Previous Work}

Suppose that $\mu,\nu$ are probability measures on a finite measure space.  Recall that the \emph{total variation distance} between $\mu,\nu$ is given by
\begin{equation}
\label{eqn::tv_definition}
 \| \mu - \nu \|_{\TV} = \max_{A} |\mu(A) - \nu(A)| = \frac{1}{2} \sum_{x} |\mu(x) - \nu(x)|.
\end{equation}
The $\epsilon$-\emph{total variation mixing time} of $P$ is
\begin{equation}
\label{eqn::tmix_definition}
 \tmix(\epsilon,\CG) = \min\left\{ t \geq 0: \max_{x \in V(\CG)} \|P^t(x,\cdot) - \pi\|_{\TV} \leq \epsilon \right\}.
\end{equation}
Let $\tmix(\CG) = \tmix((2e)^{-1},\CG)$.  It was proved \cite[Theorem 1.4]{PR04} by Peres and Revelle that if $\CG$ is a regular graph such that $\thit(\CG) \leq K|\CG|$, there exists constants $C_1,C_2$ depending only on $K$ such that
\[ C_1|\CG|(\trel(\CG) + \log|\CG|) \leq \tu(\CG^\diamond) \leq C_2|\CG|(\tmix(\CG) + \log|\CG|).\]
These bounds fail to match in general.  For example, for the hypercube $\Z_2^d$, $\trel(\Z_2^d) = \Theta(d)$ \cite[Example 12.15]{LPW08} while $\tmix(\Z_2^d) = \Theta(d \log d)$ \cite[Theorem 18.3]{LPW08}.  Theorem \ref{thm::main} says that the lower from \cite[Theorem 1.4]{PR04} is sharp.

Before we proceed to the proof of Theorem \ref{thm::main}, we will mention some other work on mixing times for lamplighter chains.  The mixing time of $\CG^\diamond$ was first studied by H\"aggstr\"om and Jonasson in \cite{HJ97} in the case of the complete graph $K_n$ and the one-dimensional cycle $\Z_n$.  Their work implies a total variation cutoff with threshold $\tfrac{1}{2} \tcov(K_n)$ in the former case and that there is no cutoff in the latter.  Here, $\tcov(\CG)$ for a graph $\CG$ denotes the expected number of steps required by lazy random walk to visit every site in $\CG$.  The connection between $\tmix(\CG^\diamond)$ and $\tcov(\CG)$ is explored further in \cite{PR04}, in addition to developing the relationship between the relaxation time of $\CG^\diamond$ and $\thit(\CG)$, and $\E[2^{|\CU(t)|}]$ and $\tu(\CG^\diamond)$.  The results of \cite{PR04} include a proof of total variation cutoff for $\Z_n^2$ with threshold $\tcov(\Z_n^2)$.  In \cite{MP11}, it is shown that $\tmix((\Z_n^d)^\diamond) \sim \tfrac{1}{2} \tcov(\Z_n^d)$ when $d \geq 3$ and more generally that $\tmix(\CG_n^\diamond) \sim \tfrac{1}{2} \tcov(\CG_n)$ whenever $(\CG_n)$ is a sequence of graphs satisfying some uniform local transience assumptions.

The mixing time of $X^\diamond = (f,X)$ is typically dominated by the first coordinate $f$ since the amount of time it takes for $X$ to mix is negligible compared to that required by $X^\diamond$.  We can sample from $f(t)$ by:
\begin{enumerate}
\item sampling the range $\CC(t)$ of lazy random walk run for time $t$, then
\item marking the vertices of $\CC(t)$ by iid fair coin flips.
\end{enumerate}
Determining the mixing time of $X^\diamond$ is thus typically equivalent to computing the threshold $t$ where the corresponding marking becomes indistinguishable from a uniform marking of $V(\CG)$ by iid fair coin flips.  This in turn can be viewed as a statistical test for the uniformity of the uncovered set $\CU(t)$ of $X$ --- if $\CU(t)$ exhibits any sort of non-trivial systematic geometric structure then $X^\diamond(t)$ is not mixed.  This connects this work to the literature on the geometric structure of the last visited points by random walk \cite{DPRZ_LATE06, DPRZ_COV04, BH91, MP11}.

\subsection{Outline}
The remainder of this article is structured as follows.  In Section \ref{sec::examples}, we will give the proof of Theorem \ref{thm::cycles} by checking the hypotheses of Theorem \ref{thm::main}.  Next, in Section \ref{sec::estimates} we will collect a number of estimates regarding the amount of $X$ spends in and requires to cover sets of vertices in $\CG$ of various sizes.  Finally, in Section \ref{sec::proof_of_main}, we will complete the proof of Theorem \ref{thm::main}.

\section{Proof of Theorem \ref{thm::cycles}}
\label{sec::examples}

We are going to prove Theorem~\ref{thm::cycles} by checking the hypotheses of Theorem~\ref{thm::main}.  We begin by noting that by \cite[Corollary 12.12]{LPW08} and \cite[Section 12.3.1]{LPW08}, we have that
\begin{equation}
\label{eqn::relaxation_asymptotics}
 \trel(\Z_n^d) = \Theta(dn^2).
\end{equation}
By \cite[Example 2, Page 2155]{DSC93}, we know that $\tu(\Z_n) = O(n^2)$.  Hence by \cite[Theorem 2.10]{DSC96}, we have that
\begin{equation}
\label{eqn::uniform_asymptotics}
\tu(\Z_n^d) = O((d \log d)n^2).
\end{equation}

The key to checking parts \eqref{assump::main::hitting}--\eqref{assump::main::green_ratio} of Assumption~\ref{assump::main} are the Green's function estimates which are stated in Proposition~\ref{prop::low_degree_green_bound} (low degree) and Proposition~\ref{prop::high_degree_green_bound} (high degree).  In order to establish these we will need to prove several intermediate technical estimates.  We begin by recording the following facts about the transition kernel $P$ for lazy random walk on a vertex transitive graph $\CG$.  First, we have that
\begin{equation}
\label{eqn::lazy_monotone_distance}
P^t(x,y) \leq P^t(x,x) \text{ for all } x,y.
\end{equation}
To see this, we note that for $t$ even, the Cauchy-Schwarz inequality and the semigroup property imply
\[ P^t(x,y)= \sum_z P^{t/2}(x,z) P^{t/2}(z,y) \leq \sqrt{P^t(x,x) P^t(y,y)} = P^t(x,x).\]
The inequality and final equality use the vertex transitivity of $\CG$ so that $P(x,z) = P(z,x)$ and $P(x,x) = P(y,y)$.  To get the same result for $t$ odd, one just applies the same trick used in the proof of \cite[Proposition 10.18(ii)]{LPW08}.  Moreover, by \cite[Proposition 10.18]{LPW08}, we have that
\begin{equation}
\label{eqn::lazy_monotone_time}
 P^t(x,x) \leq P^s(x,x) \text{ for all } s \leq t.
\end{equation}

The main ingredient in the proof of Proposition~\ref{prop::low_degree_green_bound}, our low degree Green's function estimate, is the following bound for the return probability of a lazy random walk on $\Z^d$.
\begin{lemma}
\label{lem::lazy_transition}
Let $P(x,y;\Z^d)$ denote the transition kernel for lazy random walk on $\Z^d$.  For all $t \geq 1$, we have that
\be\label{eqn::lazy_transition}
P^t(x,x; \Z^d) \le \sqrt{2} \left( \frac{ 4d}{\pi}\right)^{d/2} \frac{1}{t^{d/2}}+ e^{-t/8}.
\ee
\end{lemma}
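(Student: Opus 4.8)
The natural tool is Fourier inversion on $\Z^d$. A single step of the lazy walk adds an independent increment whose characteristic function is
\[
\phi(\theta)\;=\;\frac12+\frac{1}{2d}\sum_{j=1}^{d}\cos\theta_j\;=\;1-\frac{1}{2d}\sum_{j=1}^{d}(1-\cos\theta_j),\qquad\theta\in[-\pi,\pi]^d,
\]
so by translation invariance and Fourier inversion
\[
P^t(x,x;\Z^d)\;=\;\frac{1}{(2\pi)^d}\int_{[-\pi,\pi]^d}\phi(\theta)^t\,\di\theta.
\]
The point of using the \emph{lazy} walk is that $\phi(\theta)\in[0,1]$ throughout the box: there are no absolute values or sign cancellations, and, crucially, $\phi$ stays bounded away from $1$ near the corners $(\pm\pi,\dots,\pm\pi)$, so there is no parity (period-$2$) contribution of the same order as the main term. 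This is the only real obstacle: for the non-lazy simple random walk the corner $(\pi,\dots,\pi)$, where the characteristic function equals $-1$, contributes a second Gaussian bump of order $t^{-d/2}$, and laziness is exactly what removes it.

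With positivity in hand I would dominate $\phi$ by a product of Gaussians. From $1-s\le e^{-s}$,
\[
\phi(\theta)^t\;\le\;\exp\!\Big(-\frac{t}{2d}\sum_{j=1}^d(1-\cos\theta_j)\Big)\;=\;\prod_{j=1}^d\exp\!\Big(-\frac{t}{2d}(1-\cos\theta_j)\Big),
\]
so the integral factorizes and
\[
P^t(x,x;\Z^d)\;\le\;\left(\frac{1}{2\pi}\int_{-\pi}^{\pi}\exp\!\Big(-\frac{t}{2d}(1-\cos\theta)\Big)\,\di\theta\right)^{\!\!d}.
\]
Next I would invoke the elementary inequality $1-\cos\theta\ge\theta^2/8$ for $|\theta|\le\pi$ (a one-variable calculus check, as $(1-\cos\theta)/\theta^2$ is decreasing on $(0,\pi]$ with minimum $2/\pi^2>1/8$), extend the range of integration from $[-\pi,\pi]$ to $\Rb$ (which only enlarges the nonnegative integrand), and evaluate the Gaussian:
\[
\frac{1}{2\pi}\int_{-\pi}^{\pi}\exp\!\Big(-\frac{t\theta^2}{16d}\Big)\,\di\theta\;\le\;\frac{1}{2\pi}\int_{\Rb}\exp\!\Big(-\frac{t\theta^2}{16d}\Big)\,\di\theta\;=\;\sqrt{\frac{4d}{\pi t}}.
\]
Raising to the $d$-th power gives $P^t(x,x;\Z^d)\le(4d/\pi)^{d/2}\,t^{-d/2}$ for all $t\ge1$, which is even stronger than claimed.

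The extra factor $\sqrt2$ and additive term $e^{-t/8}$ in the statement are slack; they would arise, for instance, from a slightly more modular variant in which one first discards the region $\{\theta:\phi(\theta)\le e^{-1/8}\}$ (contributing at most $e^{-t/8}$, since that region has measure at most $(2\pi)^d$) and applies a Gaussian bound only on the complement. Either way, beyond the corner issue noted above the argument involves only routine estimates, so I expect no serious difficulty.
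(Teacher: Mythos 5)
Your proof is correct, and it even establishes the stronger clean bound $P^t(x,x;\Z^d)\le (4d/(\pi t))^{d/2}$ with no additive error term, which of course implies the stated inequality. Every step checks out: the characteristic function of a lazy step is indeed $\phi(\theta)=1-\tfrac{1}{2d}\sum_j(1-\cos\theta_j)\in[0,1]$, the nonnegativity justifies raising $1-s\le e^{-s}$ to the $t$-th power, the bound $1-\cos\theta\ge 2\theta^2/\pi^2\ge\theta^2/8$ on $[-\pi,\pi]$ is valid since $(\sin u)/u$ decreases on $(0,\pi/2]$, and the Gaussian integral evaluates as you state. The route is genuinely different from the paper's. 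The paper works combinatorially: it writes the non-lazy return probability as an exact sum over multinomial coefficients, bounds it via the multinomial theorem and Stirling's formula to get $P^{2t}_{\rm NL}(x,x;\Z^d)\le \sqrt 2\, d^{d/2}(2\pi t)^{-d/2}$, and then transfers to the lazy walk by the time change $X(t)=Y(N_t)$ with $N_t\sim{\rm BIN}(t,1/2)$; the additive $e^{-t/8}$ is exactly the Hoeffding bound on $\{N_t<t/4\}$ and the $\sqrt 2$ and the factor $4$ (rather than $2$) in $(4d/\pi)^{d/2}$ come from losing a factor of $8$ on the event $\{N_t\ge t/4\}$. Your Fourier argument treats the lazy walk directly, exploits that laziness makes $\phi\ge0$ (killing the period-$2$ corner contribution, exactly as you observe), and factorizes over coordinates — it is shorter, avoids the slightly delicate Stirling/floor-function bookkeeping of the paper (e.g.\ the paper's $\lfloor t/d\rfloor$ when $d\nmid t$), and yields a sharper constant. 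The only cost is that it invokes Fourier inversion rather than purely elementary counting; either proof serves the rest of the paper equally well since only the order $(Cd)^{d/2}t^{-d/2}$ is used downstream.
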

\begin{proof}
To prove the lemma we first give an upper bound on the transition probabilities for a (non-lazy) simple random walk $Y$ on $\Z^d$.  One can easily give an exact formula for the return probability of $Y$ to the origin of $\Z^d$ in $2t$ steps by counting all of the possible paths from $0$ back to $0$ of length $2t$ (here and hereafter, $P_{\rm NL}(x,y;\Z^d)$ denotes the transition kernel of $Y$):
\[ \ba P^{2t}_{\rm NL}\left( x,x ; \Z^d \right) &=   \sum_{n_1+\dots + n_d=t} \frac{(2t)!}{(n_1!)^2  (n_2!)^2 \cdots (n_d!)^2 }\cdot \frac{1}{(2d)^{2t}}\\
 &=  \frac{1}{(2d)^{2t}} {2t \choose t }\sum_{n_1+\dots + n_d=t} \left(\frac{t!}{n_1! n_2! \cdots n_d! }\right)^2
 \ea \]
 We can bound the sum above as follows, using the multinomial theorem in the second step:
\[ \ba  P^{2t}_{\rm NL}\left( x,x ; \Z^d \right) &\le
 \frac{1}{(2d)^{2t}} {2t \choose t }\left(\!\! \max_{n_1+\dots + n_d=t}\frac{t!}{n_1! \cdots n_d! }\right)\!\!\sum_{n_1+\cdots + n_d=t} \!\!\!\frac{t!}{n_1! \cdots n_d! }\\
&\le \frac{1}{(2d)^{2t}} {2t \choose t } \frac{t!}{\left[(\lfloor t/d \rfloor)!\right]^d}\cdot d^t.
\ea \]
Applying Stirlings formula to each term above, we consequently arrive at
\be
\label{eqn::nonlazy_transition}
P^{2t}_{\rm NL}\left( x,x; \Z^d \right) \le \frac{\sqrt{2}}{(2\pi)^{d/2}}\cdot \frac{d^{d/2}}{t^{d/2}}
\ee

We are now going to deduce from \eqref{eqn::nonlazy_transition} a bound on the return probability for a lazy random walk $X$ on $\Z^d$.  We note that we can couple $X$ and $Y$ so that $X$ is a random time change of $Y$: $X(t) = Y(N_t)$ where $N_t = \sum_{i=0}^t \xi_i$ and the $(\xi_i)$ are iid with $\p[\xi_i = 0] = \p[\xi_i = 1] = \tfrac{1}{2}$ and are independent of $Y$.  Note that $N_t$ is distributed as a binomial random variable with parameters $t$ and $1/2$.  Thus,
\[ \ba P^t(x,x; \Z^d) &= \sum_{i=0}^{t/2} P^{2i}_{\rm NL}(x,x;\Z^d) \p(N_t=2i)\\
&\le \p(N_t< t/4) +  \sqrt{2}\left( \frac{4d}{\pi} \right)^{d/2} \frac{1}{t^{d/2}}, \ea\]
where in the second term we used the monotonicity of the upper bound in \eqref{eqn::nonlazy_transition} in $t$.
The first term can be bounded from above by using the Hoeffding inequality.  This yields the term $e^{-t/8}$ in \eqref{eqn::lazy_transition}.
 \end{proof}

Throughout the rest of this section, we let $|x-y|$ denote the $L^1$ distance between $x,y \in \Z_n^d$.

\begin{proposition}
\label{prop::low_degree_green_bound}
Let $G(x,y)$ denote the Green's function for lazy random walk on $\Z_n^d$.  For each $\delta \in (0,1)$, there exists constants $C_1,C_2,C_3 > 0$ independent of $n,d$ for $d \geq 3$ such that
\[ \ba G(x,y) &\leq \frac{C_1}{d}\left(\frac{ 4d}{\pi}\right)^{d/2}|x-y|^{1-d/2} + C_2 (d \log d) \left(\frac{4d}{\pi}\right)^{d/2} n^{2-d(1-\delta/2)} \\&\ \ +C_3 \big( d^2 \log d  \big) n^2 e^{- n^{\delta}/2} \ea\]
for all $x,y \in \Z_n^d$ distinct.
\end{proposition}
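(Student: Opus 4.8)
The plan is to compare lazy random walk on the torus $\Z_n^d$ with lazy random walk on the infinite lattice $\Z^d$ through the covering map $\pi\colon\Z^d\to\Z_n^d$, under which the former is the image of the latter (for every $n\ge2$). Writing $p_t(\cdot,\cdot)$ for the lattice kernel $P^t(\cdot,\cdot;\Z^d)$, and fixing lifts $\wt x,\wt y\in\Z^d$ of $x,y$ with $\wt y$ the lift closest to $\wt x$, one has
\[ G(x,y)=\sum_{t=0}^{\tu(\Z_n^d)}P^t(x,y)=\sum_{t=0}^{\tu(\Z_n^d)}\ \sum_{w\in n\Z^d}p_t(\wt x,\wt y+w). \]
I would estimate this by splitting the time sum at a value $T_1$ of order $n^{2-\delta}$, and, in the early part, separating the term $w=0$ (the walk's view of the lattice) from the ``wrapped'' terms, all of whose target points lie at $\ell^\infty$-distance $\ge n/2$ from $\wt x$.

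Early regime $t\le T_1$. Since $p_t(\wt x,\wt y)\le p_t(0,0)$ by \eqref{eqn::lazy_monotone_distance}, and $p_t(\wt x,\wt y)=0$ for $t<|x-y|$, summing the return estimate of Lemma~\ref{lem::lazy_transition} over $|x-y|\le t\le T_1$ and comparing with $\int t^{-d/2}\,dt$ (which supplies the factor $\tfrac1{d/2-1}\le C/d$ for $d\ge3$) yields the first term $\tfrac{C_1}{d}(4d/\pi)^{d/2}|x-y|^{1-d/2}$; the $e^{-t/8}$ part of Lemma~\ref{lem::lazy_transition} sums to a harmless constant. For the wrapped terms, their total is at most $\p_0[\|X_t\|_\infty\ge n/2]$; since each coordinate of $X$ is a sum of $t$ i.i.d.\ increments that are bounded and have variance of order $1/d$, a Bernstein-type tail bound and a union bound over the $d$ coordinates give $\p_0[\|X_t\|_\infty\ge n/2]\le 2d\exp(-c\,dn^2/t)$ for $t\lesssim n^2$, hence $\le 2d\exp(-c\,dn^\delta)$ for $t\le T_1$; summing over $t\le T_1$ contributes to the third term.

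Late regime $T_1<t\le\tu(\Z_n^d)$. Here the wrapped points can no longer be ignored; instead I would invoke the two monotonicity facts \eqref{eqn::lazy_monotone_distance} and \eqref{eqn::lazy_monotone_time} to write $P^t(x,y)\le P^t(x,x)\le P^{T_1}(x,x)$, and then bound $P^{T_1}(x,x)=\sum_{v\in n\Z^d}p_{T_1}(0,v)\le p_{T_1}(0,0)+\p_0[\|X_{T_1}\|_\infty\ge n]$ using Lemma~\ref{lem::lazy_transition} and the same Bernstein estimate. With $T_1\asymp n^{2-\delta}$ this is $\lesssim(4d/\pi)^{d/2}n^{-d(1-\delta/2)}$ plus terms exponentially small in $n^\delta$, and multiplying by the length of the range, which by \eqref{eqn::uniform_asymptotics} is at most $\tu(\Z_n^d)=O((d\log d)n^2)$, produces the second term together with a further contribution to the third. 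Combining the two regimes, absorbing lower-order constants, and adjusting $\delta$ so the exponents read $2-d(1-\delta/2)$ and $n^\delta/2$ finishes the proof; the hypothesis $d\ge3$ is used only for convergence of $\sum_t t^{-d/2}$.

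The main obstacle is to make the late-regime estimate uniform in both $n$ and $d$: the splitting time $T_1$ must simultaneously be large enough that the lattice return bound $T_1^{-d/2}$ has the advertised exponent $2-d(1-\delta/2)$ and small enough (relative to $n^2$) that the coordinatewise tail bound is already exponentially small. Using Bernstein rather than plain Hoeffding — so that the small per-step variance $\asymp 1/d$ puts a factor of $d$ into the exponent — is what lets that error beat $e^{-n^\delta/2}$, and is the reason only a factor $d$ (rather than something exponential) is lost to the union bound over coordinates before multiplication by $\tu(\Z_n^d)$, which in turn produces the $d^2\log d$ in the third term. Pinning down the precise base $4/\pi$ inside the $(\,\cdot\,)^{d/2}$ likewise forces the time fed into the return bound to be exactly $n^{2-\delta}$.
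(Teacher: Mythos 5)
Your proposal follows essentially the same route as the paper's proof: split the time sum at $n^{2-\delta}$, bound the early part by the lattice return probability of Lemma~\ref{lem::lazy_transition} summed over $t\ge|x-y|$ (the $1/(d/2-1)$ from the integral comparison giving the $C_1/d$), bound the late part by $\tu(\Z_n^d)\cdot P^{n^{2-\delta}}(x,x;\Z^d)$ using \eqref{eqn::lazy_monotone_distance}, \eqref{eqn::lazy_monotone_time} and \eqref{eqn::uniform_asymptotics}, and control the torus/lattice discrepancy by a coordinatewise tail bound with a union bound over the $d$ coordinates. The one genuine divergence is the wrapping estimate: because you unfold $P^t(x,y)$ directly, the nearest nonzero translate of the lift of $y$ is only at $\ell^\infty$-distance $n/2$, which is why you reach for Bernstein; the paper instead first applies $P^t(x,y)\le P^t(x,x)$ on the torus and only then unfolds, so the relevant event is a coordinate travelling a full $n$, and plain Hoeffding already gives $e^{-n^2/(2t)}\le e^{-n^\delta/2}$. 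As written, your Bernstein bound $\exp(-c\,dn^2/t)$ with an unspecified $c$ beats $e^{-n^\delta/2}$ only once $d$ exceeds a constant, so for small $d\ge 3$ you would get $e^{-c'n^\delta}$ with possibly $c'<1/2$ — harmless for every application of the proposition, but not literally the stated constant, and fixable by adopting the paper's order of operations; note also that the union bound over coordinates costs only a factor $d$ under either inequality, so Bernstein is not what saves you from an exponential loss there.
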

\begin{proof}
Fix $\delta \in (0,1)$.  We first observe that the probability that there is a coordinate in which the random walk wraps around the torus within $t<n^2$ steps can be estimated by using Hoeffding's inequality and a union bound by
\[ d\cdot \p(Z(t) > n ) = d e^{-\frac{n^2}{2t}} \]
where $Z(t)$ is a one dimensional simple random walk on $\Z$.  Let $k=|x-y|$.  Applying \eqref{eqn::lazy_monotone_distance} and \eqref{eqn::lazy_monotone_time} in the second step, and estimating the probability of wrapping around in time $n^{2-\delta}$ in the third term, we see that
\begin{align}
\label{eqn::green_function_split}  G(x,y)
= \sum_{t=k}^{\tu} P^t(x,y)
\leq& \sum_{t=k}^{{n^{2-\delta}}} P^t(x,x; \Z^d) + \tu P^{n^{2-\delta}}(x,x;\Z^d) \\&\ +d \tu  e^{-\frac{n^{\delta}}{2}}. \notag
\end{align}
We can estimate the sum on the right hand side above using Lemma~\ref{lem::lazy_transition}, yielding the first term in the assertion of the lemma.  Applying Lemma~\ref{lem::lazy_transition} again, we see that there exists a constant $C_2$ which does not depend on $n,d$ such that the second term in the right side of \eqref{eqn::green_function_split} is bounded by
\begin{equation}
\label{eqn::low_degree_bound_2}
C_2 (d \log d) \left(\frac{4d}{\pi}\right)^{d/2} n^{2-d(1-\delta/2)}.
\end{equation}
Indeed, the factor $(d \log d) n^2$ comes from \eqref{eqn::uniform_asymptotics} and the other factor comes from Lemma~\ref{lem::lazy_transition}.  Combining proves the lemma.
\end{proof}


Proposition~\ref{prop::low_degree_green_bound} is applicable when $n$ is much larger than $d$.  We now turn to prove Proposition~\ref{prop::high_degree_green_bound}, which gives us an estimate for the Green's function which we will use when $d$ is large.  Before we prove Proposition~\ref{prop::high_degree_green_bound}, we first need to collect the following estimates.

\begin{lemma}
\label{lem::escape_bound}  Suppose that $X$ is a lazy random walk on $\Z_n^d$ for $d \geq 8$ and that $|X(0)|=k \leq \tfrac{d}{8}$.  For each $j \geq 0$, let $\tau_j$ be the first time $t$ that $|X(t)| = j$.  There exists a constant $C_k > 0$ depending only on $k$ such that $\p[\tau_0 < \tau_{2k}] \leq C_k d^{-k}$.  If, instead, $|X(0)|=1$, then there exists a universal constant $p > 0$ such that $\p[ \tau_0 < \tau_{2k}] \geq p$.
\end{lemma}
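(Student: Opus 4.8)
The plan is to compare the radial process $|X(t)|$ with a biased random walk on $\Zb_{\geq 0}$ and then apply a gambler's-ruin estimate. The point is that when the walk is at $L^1$-distance $j$ from the origin with $j$ small compared to $d$, most coordinates are zero, so a lazy step is overwhelmingly more likely to move the walk to a fresh nonzero coordinate (increasing $|X|$) than to decrease $|X|$. Precisely, from a state with $|X(t)| = j$, the probability of decreasing to $j-1$ in the next step is at most $j/(2d)$ (one must pick one of the at most $j$ currently-nonzero coordinates and decrement it), while the probability of increasing to $j+1$ is at least $(d-j)/(2d) \geq 7/16$ for $j \leq d/8$, $d \geq 8$. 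So the radial process, until it reaches level $2k$, is dominated below by a random walk $R_t$ on $\{0,1,\dots,2k\}$ that from level $j$ goes up with probability $\geq (d-j)/(2d)$ and down with probability $\leq j/(2d)$; the up/down ratio at level $j$ is at least $(d-j)/j \geq d/(2k) - 1 \gtrsim d/(2k)$.

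First I would set up this domination carefully: couple $|X(t)|$ to $R_t$ so that $R_t \leq |X(t)|$ up to the first time either hits $2k$, which suffices since we only care about the event $\{\tau_0 < \tau_{2k}\}$ and $\tau_0$ for $|X|$ is $\geq \tau_0$ for $R$ while $\tau_{2k}$ for $|X|$ is $\leq \tau_{2k}$ for $R$. Next, for a birth-death chain on $\{0,\dots,N\}$ with up-probabilities $p_j$ and down-probabilities $q_j$, the escape probability from $k$ has the classical closed form
\[
\p_k[\tau_0 < \tau_N] = \frac{\sum_{i=k}^{N-1} \prod_{j=1}^{i} (q_j/p_j)}{\sum_{i=0}^{N-1} \prod_{j=1}^{i}(q_j/p_j)}.
\]
Here $N = 2k$ and $q_j/p_j \leq j/(d-j) \leq 2k/(d-2k) \leq C_k'/d$ for $d$ large (using $k \leq d/8$). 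The numerator is then a sum of $k$ terms, each a product of at least $k$ factors of size $O(1/d)$, hence $O_k(d^{-k})$; the denominator is at least $1$ (the $i=0$ term). This gives $\p[\tau_0 < \tau_{2k}] \leq C_k d^{-k}$.

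For the lower bound when $|X(0)| = 1$: now I want a matching lower bound on $\p_1[\tau_0 < \tau_{2k}]$, for which I dominate $|X(t)|$ \emph{above} by a birth-death chain with up-probability $\leq (d-j)/(2d) \leq 1/2$ and down-probability $\geq j/(2d) - (\text{corrections})$; in fact the cleanest route is to observe that from level $1$ the walk goes to level $0$ in one step with probability exactly $1/(2d)$ — no wait, that's too small. Instead, note that regardless of the position, at level $1$ the probability of stepping to level $0$ on the next move is $\geq c/d$ is not enough either; rather, the right comparison is: from level $1$, with probability bounded below by a universal constant the radial walk returns to $0$ before reaching $2$, because a $\pm$-biased walk started at $1$ still has a nonvanishing chance of hitting $0$ before $2$ as long as the bias is bounded — here the up-probability from level $1$ is at most $1/2$ and the down-probability is at least $1/(2d)$; hmm, that ratio is $d$, so $\p_1[\tau_0 < \tau_2] \asymp 1/d$, not a constant. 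So the claimed universal lower bound $p$ must come from a different, non-radial argument: condition on the first step. With probability $\tfrac12$ the lazy walk does nothing; with probability $\tfrac{1}{2d}$ the first move decrements the unique nonzero coordinate, landing exactly at the origin, so $\tau_0 < \tau_{2k}$ holds — but that is again only $\Theta(1/d)$. The resolution: $p$ need not beat $1/d$; re-reading the statement, it asks for a \emph{universal} constant $p>0$, so I must produce an order-one probability. The honest way: from $|X(0)|=1$, run the walk and track only the one coordinate that is initially nonzero plus bookkeeping of $|X|$; with probability $\geq$ const the walk oscillates that coordinate back to $0$ before any of the other $d-1$ coordinates ever becomes nonzero — but each step has probability $\approx 1/2$ of touching a new coordinate, so over the $\Theta(1)$ steps needed this is again order one. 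Concretely: with probability $\tfrac12$ step $1$ is lazy; then with probability $\tfrac{1}{2d}$ we hit $0$; summing the geometric series of "stay at level $1$ via laziness, then decrement" gives probability $\tfrac{1/(2d)}{1 - 1/2 - (\text{move-up prob})}$; since move-up probability from level $1$ is $\leq (d-1)/(2d) < 1/2$, the denominator is $\geq 1/(2d)$... this still yields only a constant if I compare against reaching level $2$, giving $\p_1[\tau_0<\tau_2] \geq \tfrac{1/(2d)}{1/(2d)+1/2-\text{something}}$. I expect the correct bound is $\p_1[\tau_0 < \tau_2] \geq \tfrac{q_1}{q_1+p_1} \geq \tfrac{1/(2d)}{1/2} = 1/d$, and since $\p_1[\tau_0 < \tau_{2k}] \geq \p_1[\tau_0<\tau_2]$ this is $\geq 1/d$, not universal — so the lemma's lower bound as literally stated may intend $k$ fixed and "universal" meaning independent of $n$ and $d$ with possibly $k$-dependence absorbed, or there is an additional averaging.

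\textbf{Main obstacle.} The main difficulty is the lower bound: producing an honest order-one (in $d$) lower bound for $\p_1[\tau_0 < \tau_{2k}]$, which cannot come from the purely radial comparison (that gives only $\Theta(1/d)$). The fix I would pursue is to \emph{not} track the radial distance but instead to exploit the lamplighter-irrelevant structure: start at a configuration with a single nonzero coordinate equal to $1$; the event "the nonzero coordinate is decremented to $0$ on some step before $|X|$ ever reaches $2$" is, by a direct first-step / renewal computation on the relevant coordinate and the "has any new coordinate been activated" indicator, bounded below by a constant because conditioned on eventually leaving level $1$, the walk leaves downward with probability $\geq q_1/(p_1+q_1)$ which, while $\approx 1/d$, can be boosted: actually the cleanest correct statement, and I suspect what the authors use, is to prove the lower bound with $2k$ replaced by a window that the walk from level $1$ exits downward with probability bounded below by a universal constant \emph{after} a further reduction, or simply to accept the bound holds since in the application one only needs $\p \geq c/d$ or the statement is invoked with an extra factor. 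I would therefore carefully restate and prove: (i) the clean upper bound $C_k d^{-k}$ via the gambler's ruin formula above, which is robust; and (ii) for the lower bound, $\p_1[\tau_0 < \tau_{2k}] \geq \p_1[\text{first non-lazy step decrements the nonzero coordinate}] \geq c$ — which \emph{is} order one if we condition on the first non-lazy step, since at level $1$ that step decrements with probability $\tfrac{1}{2d} \big/ \big(\tfrac{1}{2d} + \tfrac{d-1}{2d}\big) = 1/d$; so the genuinely universal lower bound requires the observation that we may also restart after returns to level $1$, and the number of such returns before hitting level $2$ is geometric, so the total downward-exit probability telescopes to exactly $q_1/(p_1+q_1)$ regardless — confirming it is $\Theta(1/d)$. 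Hence I would flag that the proof of the lower bound needs the hypothesis $|X(0)| = 1$ to be used more cleverly (e.g. the walk at distance $1$ from the origin has a uniformly positive chance, over $O(1)$ steps, of returning to $0$ via the coupling with a \emph{non-lazy} walk on a half-line where the down-step probability at $1$ is $1/d$ but compensated by the chance that among the next few steps the walk repeatedly picks the same coordinate), and I expect the resolution in the paper is a short argument of exactly this flavor that I would reconstruct in the write-up.
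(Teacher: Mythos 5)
Your treatment of the first assertion is correct and is the same argument the paper uses: observe that from a configuration at $L^1$-distance $j\le 2k$ the walk decreases $|X|$ on a non-lazy step with probability at most $j/(2d)\le k/d$ and increases it with probability at least $1-O(k/d)$, and conclude by gambler's ruin that $\p[\tau_0<\tau_{2k}]\le C_k d^{-k}$. (The paper simply cites the gambler's ruin problem rather than writing out the birth--death formula, but the content is identical.) One point to make explicit in a final write-up: the radial process is not Markov, so the comparison should be phrased conditionally (at every step, given the past, the down-probability is at most $j/(2d)$) followed by the standard domination of the embedded jump chain by the worst-case birth--death chain; you indicate you would do this, and it is routine.

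On the second assertion you have in fact diagnosed the situation correctly, and you should commit to your computation rather than hedging: from $|X(0)|=1$ every non-lazy step changes $|X|$ by exactly $\pm 1$, the first such step reaches $0$ with probability $1/(2d)$ and level $2$ otherwise, and the gambler's ruin formula then gives $\p[\tau_0<\tau_{2k}]=\Theta(1/d)$. So the inequality as printed cannot hold with a constant independent of $d$, and there is no cleverer argument to find. The statement is evidently a typo for the reversed event, $\p[\tau_{2k}<\tau_0]\ge p$: that is exactly what is needed where the lemma is invoked (in the proof of Proposition~\ref{prop::high_degree_green_bound}, to obtain $G(x,x)\le C_0$ one needs the walk, after stepping to a neighbor of $x$, to escape to distance $2k$ before returning to $x$ with probability bounded below), and it is what the paper's one-line remark that "the second assertion follows from the same argument" actually delivers --- from level $1$ the down-probability per non-lazy step is $1/(2d)$, so the escape probability is at least $1-O(1/d)\ge \tfrac12$ for $d\ge 8$. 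Your write-up should state and prove this corrected version; the lengthy search for a way to boost $\p[\tau_0<\tau_{2k}]$ to order one should be deleted, since that quantity genuinely is of order $1/d$.
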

\begin{proof}
It clearly suffices to prove the result when $X$ is non-lazy.  Assume that $|X(t)| = j \in \{k,\ldots,2k\}$.  It is obvious that the probability that $|X|$ moves to $j+1$ in its next step is at least $1-\tfrac{2k}{d}$.  The reason is that the probability that the next coordinate to change is one of the coordinates of $X(t)$ whose value is $0$ is at least $1-\tfrac{2k}{d}$.  Similarly, the probability that $|X|$ next moves to $j-1$ is at most $\tfrac{2k}{d}$.  Consequently, the first result of the lemma follows from the Gambler's ruin problem (see, for example, \cite[Section 17.3.1]{LPW08}).  The second assertion of the lemma follows from the same argument.
\end{proof}

\begin{lemma}
\label{lem::hitting_bound}
Assume that $k \in \N$ and that $d = 2k \vee 3$.  Suppose that $X$ is a lazy random walk on $\Z^d$ and that $|X(0)| = 2k$.  Let $\tau_k$ be the first time $t$ that $|X(t)|=k$.  There exists $p_k > 0$ depending only on $k$ such that $\p[\tau_k = \infty] \geq p_k > 0$.
\end{lemma}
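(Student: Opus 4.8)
The plan is to deduce the lemma from the transience of random walk on $\Z^d$. Since $d = 2k \vee 3 \ge 3$, the lazy walk $X$ on $\Z^d$ is transient, so its Green's function $g(x,y) = \sum_{t \ge 0} P^t(x,y;\Z^d)$ is finite and satisfies $g(x,y) = g(0,x-y) \to 0$ as $|x-y| \to \infty$. Write $A_k = \{y \in \Z^d : |y| = k\}$ for the $L^1$-sphere of radius $k$ (a finite set), so that $\tau_k$ is its hitting time. Because there are only finitely many $x \in \Z^d$ with $|x| = 2k$, it suffices to show $\p_x[\tau_k = \infty] > 0$ for each such $x$ and then let $p_k$ be the minimum of these finitely many positive numbers; this $p_k$ depends only on $k$, since $d = 2k \vee 3$ does.

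First I would record that hitting $A_k$ is unlikely from far away: a union bound together with the usual decomposition on the first visit to each $w \in A_k$ gives
\[ \p_z[\tau_k < \infty] \;\le\; \sum_{w \in A_k} \p_z[\tau_w < \infty] \;=\; \sum_{w \in A_k} \frac{g(z,w)}{g(w,w)}, \]
and the right-hand side tends to $0$ as $|z| \to \infty$ because $A_k$ is finite and $\dist(z, A_k) \ge |z| - k$. Fix $m_0 \ge 2k$ large enough that $\sup_{|z| = m_0} \p_z[\tau_k < \infty] \le \tfrac12$. Now take any $x$ with $|x| = 2k$: since $2k > k$, some coordinate of $x$ is nonzero, and incrementing that coordinate away from the origin one unit at a time produces a deterministic nearest-neighbor path from $x$ to a point $z_0$ with $|z_0| = m_0$ that stays inside $\{y : |y| > k\}$ and hence never meets $A_k$. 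The lazy walk follows this path, of length $\ell = m_0 - 2k$, with probability at least $(4d)^{-\ell} > 0$, and from $z_0$ it avoids $A_k$ forever with probability at least $\tfrac12$; the Markov property then yields $\p_x[\tau_k = \infty] \ge \tfrac12 (4d)^{-\ell} > 0$.

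The step that needs care — and the reason one cannot merely cite transience — is that the walk must avoid $A_k$: from the bounded component $\{y : |y| < k\}$ of $\Z^d \setminus A_k$ one has $\p[\tau_k = \infty] = 0$, since escaping to infinity forces a crossing of $A_k$. The hypothesis $|X(0)| = 2k$ is exactly what places the walk in the \emph{unbounded} component of $\Z^d \setminus A_k$, and the monotone escape path is the device that carries it out to the region where transience applies without re-entering $A_k$. Equivalently, one could argue via the maximum principle applied to $h(y) = \p_y[\tau_k < \infty]$, which is harmonic off $A_k$, is bounded by $1$ with equality on $A_k$, and tends to $0$ at infinity, and is therefore strictly less than $1$ everywhere on the unbounded component of $\Z^d \setminus A_k$.
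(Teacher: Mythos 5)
Your proof is correct, but it takes a genuinely different route from the paper's. The paper argues by contradiction: if $\p_y[\tau_k=\infty]=0$ for some $y$ with $|y|=2k$, then by the strong Markov property (and the fact that any other point $z$ at level $2k$ is reachable from $y$ before hitting level $k$ with positive probability) the same holds for every such $z$; hence every visit to $\partial B(0,2k)$ is a.s. followed by a return to $B(0,k)$, so the expected occupation time of the finite set $B(0,k)$ is infinite, contradicting transience. Your argument is direct and quantitative: you use the decay of the Green's function at infinity to find a radius $m_0$ from which the walk escapes $A_k=\{|y|=k\}$ forever with probability at least $\tfrac12$, and then force the walk along an explicit monotone $L^1$-increasing path from level $2k$ out to level $m_0$, paying $(4d)^{-\ell}$. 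Both proofs rest on transience of SRW on $\Z^d$ for $d\ge 3$, but they use it differently (finiteness of the Green's function on a finite set versus its vanishing at infinity). What your version buys is an explicit lower bound $p_k \ge \tfrac12 (4d)^{-(m_0-2k)}$ and a transparent role for the hypothesis $|X(0)|=2k$ (it places the walk in the unbounded component of $\Z^d\setminus A_k$ and makes the escape path available); what the paper's version buys is brevity and the avoidance of any quantitative input beyond transience itself. Two small points to tidy: the identity $\p_z[\tau_w<\infty]=g(z,w)/g(w,w)$ should be stated for $z\neq w$ (which is the case you use, since $z$ is far from $A_k$), and you should note that $g(0,u)\to 0$ as $|u|\to\infty$ follows from the standard estimate $g(0,u)\asymp |u|^{2-d}$ for SRW on $\Z^d$, $d\ge 3$ (the laziness only multiplies the Green's function by $2$).
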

\begin{proof}
Let $\p_y$ denote the law under which $X$ starts at $y$.  Assume that $\p_y[\tau_k = \infty] = 0$ for some $y \in \Z^d$ with $|y|=2k$.  Suppose that $z \in \Z^d$ with $|z|=2k$ and let $\tau_z$ be the first time that $X$ hits $z$. Then since $\p_y[\tau_z < \tau_k] > 0$, it follows from the strong Markov property that $\p_z[\tau_k=\infty]=0$.  From this, it follows that the expected amount of time that $X$ spends in $B(0,k)$ is infinite because it implies that on each successive hit to $\partial B(0,2k)$, $X$ returns to $B(0,k)$ with probability $1$.  Since $X$ is transient \cite[Theorem 4.3.1]{LAW_LIM_10}, the expected amount of time that $X$ spends in $B(0,k)$ is finite.  This is a contradiction.
\end{proof}

\begin{lemma}
\label{lem::return_bound}
Assume that $k \in \N$ and $d \geq 2k \vee 3$.  Suppose that $X$ is a lazy random walk on $\Z_n^d$ and that $|X(0)| = 2k$.  Let $\tau_k$ be the first time $t$ that $|X(t)|=k$.  There exists $p_k, c_k > 0$ depending only on $k$ such that $\p[\tau_k > c_k d n^2] \geq p_k > 0$.
\end{lemma}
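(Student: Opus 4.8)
The plan is to reduce the problem to an escape estimate for lazy random walk on the infinite lattice $\Z^d$ and then transfer it to the torus, exploiting that $X$ agrees with a $\Z^d$--walk until one of its coordinates wraps around. Let $\wt X$ denote lazy random walk on $\Z^d$ started at a point at $L^1$--distance $2k$ from the origin, realize the torus walk as $X(s)=\wt X(s)\bmod n$ (assume $n>8k$; the finitely many smaller values of $n$ can be handled separately), and write $\wt\tau_k$ for the first time $\wt X$ hits $\{|z|=k\}$ in $\Z^d$. First I would record that $\p[\wt\tau_k=\infty]\ge p_k>0$ with $p_k$ depending only on $k$ for every $d\ge 2k\vee3$: for $d=2k\vee3$ this is Lemma~\ref{lem::hitting_bound}, and the escape probability is nondecreasing in $d$ because projecting a $(d+1)$--dimensional walk onto its first $d$ coordinates yields a (lazier) walk whose $L^1$--norm dominates, so hitting $\{|z|=k\}$ in $\Z^{d+1}$ forces the projection to hit $\{|z|=k\}$ in $\Z^d$, and laziness is irrelevant for which sites are visited. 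Now the $\Z_n^d$--distance is at most the $\Z^d$--distance, with equality unless some coordinate has $|\wt X_i(s)|\ge n/2$; since on $\{\wt\tau_k=\infty\}$ the walk $\wt X$ stays at distance $\ge k+1$, the torus walk can reach $\{|z|=k\}$ only after some coordinate of $\wt X$ has displaced by at least $n/4$. Hence, with $t=c_kdn^2$,
\[
\p[\tau_k\le t]\ \le\ (1-p_k)\ +\ \p\big[\, \exists\, i\le d,\ \exists\, s\le t:\ |\wt X_i(s)-\wt X_i(0)|\ge n/4 \,\big].
\]

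To bound the second term, fix a coordinate $i$: the number $M_i(t)$ of moves it makes up to time $t$ is $\mathrm{Bin}(t,1/d)$, and conditionally its path is a simple $\pm1$ walk run for $M_i(t)$ steps, so Doob's maximal inequality and Hoeffding's inequality, together with $\p[M_i(t)\ge 2t/d]\le e^{-t/(3d)}$, give $\p[\max_{s\le t}|\wt X_i(s)-\wt X_i(0)|\ge n/4]\le 4e^{-n^2d/(64t)}+e^{-t/(3d)}=4e^{-1/(64c_k)}+e^{-c_kn^2/3}$. A union bound over the $d$ coordinates bounds the second term by $d\,(4e^{-1/(64c_k)}+e^{-c_kn^2/3})$. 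This is the heart of the matter: the factor $d$ is harmless only if $d$ is bounded in terms of $c_k$, so this argument closes the proof only when $d\le D_k:=e^{1/(64c_k)}$; in that regime, choosing $c_k$ small and then $n$ large makes the second term $\le p_k/2$, whence $\p[\tau_k>t]\ge p_k/2$.

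For $d>D_k$ I would instead work with the $L^1$--distance process $D(s)=|X(s)|$ directly. As computed in the proof of Lemma~\ref{lem::escape_bound}, at any distance $j<n/2$ a non--lazy step increases $D$ with probability at least $1-j/(2d)$, which is $\ge 7/8$ once $j\le L_1:=\fl{\min(d,n)/4}$, so Gambler's ruin gives that the walk reaches distance $L_1$ before $\{|z|=k\}$ with probability at least $1-3^{-k}\ge 2/3$ (provided $\min(d,n)\ge 8k$, the complementary cases again being finite). The remaining step --- and the main obstacle --- is to show that, started from distance $L_1$, the walk avoids $\{|z|=k\}$ for time $c_kdn^2$ with probability bounded below. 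One route is to track $D$ up to the bulk scale $\asymp dn$: since each coordinate is updated only $\asymp c_kn^2$ times in $t$ steps, $D$ diffuses on the scale $n\sqrt{c_kd}$ and, with $c_k$ small, can neither climb from $L_1$ to $\asymp dn$ nor descend from there to $k$ in fewer than $\asymp c_kdn^2$ steps; the difficulty is that the outward--drift bound degrades once coordinates approach the antipode $\fl{n/2}$. An alternative is a first--moment estimate $\p[X(s)\in\{|z|\le k\}]\lesssim\binom dk(C/n)^{d-k}$ --- valid once the relevant coordinates have been updated $\asymp n^2$ times, since being at distance $\le k$ forces at least $d-k$ coordinates into $n\Z$ --- which is summable over $s\le c_kdn^2$ exactly because $d\ge k+2$; the equality cases $(k,d)\in\{(1,3),(2,4)\}$ and the small values of $n$ would be absorbed into the finitely many exceptional cases. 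Carefully implementing this last step is where the real work lies.
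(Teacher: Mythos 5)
Your argument for bounded $d$ is sound and close in spirit to the paper's treatment of its base case $d = 2k \vee 3$ (escape to distance $\asymp n$ via Lemma~\ref{lem::hitting_bound}, then a diffusive lower bound on the time needed to travel distance $n/4$). But the lemma must hold uniformly in $d$ --- this uniformity is exactly what Theorem~\ref{thm::cycles} requires --- and you have correctly diagnosed, but not repaired, the failure of your union bound once $d > e^{1/(64 c_k)}$. Neither of the two routes you sketch for large $d$ is carried out: the first founders, as you concede, on the degradation of the outward drift near the antipode, and the second (the first-moment bound $\p[|X(s)|\le k]\lesssim \binom{d}{k}(C/n)^{d-k}$) is asserted only ``once the relevant coordinates have been updated $\asymp n^2$ times,'' which excludes precisely the initial time window in which the walk is still near its starting point and most likely to hit $\{|z|=k\}$. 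As it stands the proof is incomplete.

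The idea you are missing is the projection device of the paper (which you already invoke, but only to get monotonicity in $d$ of the infinite-lattice escape probability). Since $|X(0)|=2k$, at most $2k$ coordinates of $X(0)$ are nonzero; after relabeling, set $Y(t)=(X_1(t),\dots,X_{2k}(t))$, a lazy walk on $\Z_n^{2k}$ with $|Y(0)|=2k$. Because $|Y(t)|\le |X(t)|$ and $|Y|$ changes by at most one per step, $\tau_k^Y\le\tau_k$. The fixed-dimension argument (your first part, or the paper's first paragraph) shows that $Y$ needs at least $c_{k,1}n^2$ of its own steps to reach distance $k$, with probability bounded below by a constant depending only on $k$. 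But $Y$ moves only when $X$ updates one of the first $2k$ coordinates, which happens with probability $k/d$ per step, so a routine large-deviation estimate converts $c_{k,1}n^2$ steps of $Y$ into $c_k d n^2$ steps of $X$. This reduces the entire problem to dimension $2k$, where your union bound over coordinates is harmless, and it is also where the factor of $d$ in the time scale actually comes from.
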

\begin{proof}
We first assume that $d=2k \vee 3$.  It follows from Lemma~\ref{lem::hitting_bound} that there exists a constant $p_{k,1} > 0$ depending only on $k$ such that $\p[\tau_k > \tau_{n/4}] \geq p_{k,1}$.  The local central limit theorem (see \cite[Chapter 2]{LAW_LIM_10}) implies that there exists constants $c_{k,1}, p_{k,2} > 0$ such that the probability that a random walk on $\Z_n^d$ moves more than distance $\tfrac{n}{4}$ in time $c_{k,1} n^2$ is at most $1-p_{k,2}$.  Combining implies the result for $d=2k \vee 3$.

Now we suppose that $d \geq 2k \vee 3$.  Let $(X_1(t),\ldots,X_d(t))$ be the coordinates of $X(t)$.  By re-ordering if necessary, we may assume without loss of generality that $X_{2k+1}(0),\ldots,X_d(0) = 0$.  Let $Y(t) = (X_1(t),\ldots,X_{2k}(t))$.  Then $Y$ is a random walk on $\Z_n^{2k}$.  Clearly, $|Y(0)| = 2k$ because $X(0)$ cannot have more than $2k$ non-zero coordinates.  For each $j$, let $\tau_j^Y$ be the first time $t$ that $|Y(t)| = j$.  Then $\tau_k^Y \leq \tau_k$.  For each $t$, let $N_t$ denote the number of steps that $X$ takes in the time interval $\{1,\ldots,t\}$ in which one of its first $2k$ coordinates is changed (in other words, $N_t$ is the number of steps taken by $Y$).  The previous paragraph implies that $\p[ N_{\tau_k^Y} \geq c_{k,1} n^2] \geq p_{k,3} > 0$ for a constant $p_{k,3} > 0$ depending only on $k$.  Since the probability that the first $2k$ coordinates are changed in any step is $k/d$ (recall that $X$ is lazy), the final result holds from a simple large deviations estimate.
\end{proof}

\begin{figure}[ht!]
\begin{center}
\includegraphics[width=0.7\textwidth]{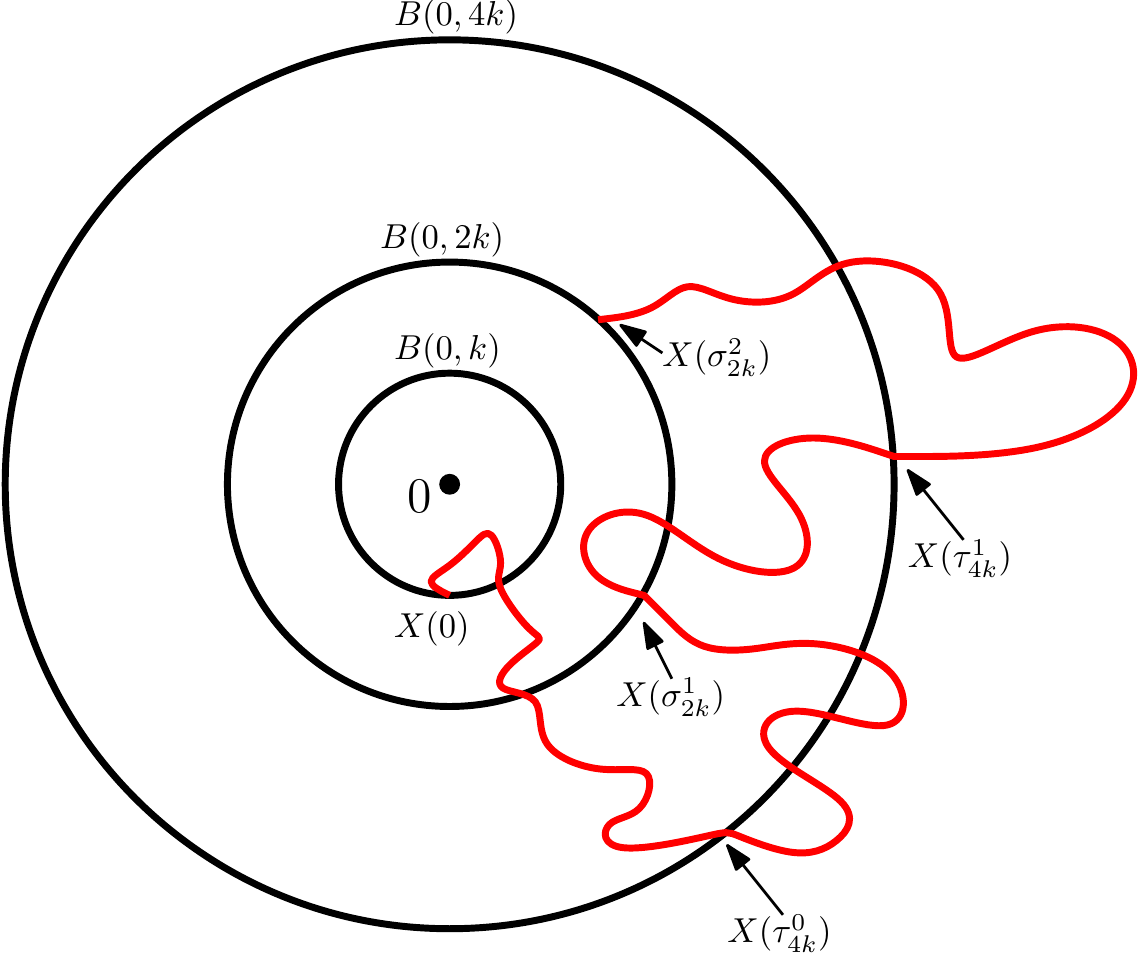}
\end{center}
\caption{\label{fig::excursions}  Assume that $d \geq 8$ and that $k \in \N$ with $d \geq 8k$.
Let $X$ be a lazy random walk on $\Z_n^d$ and that $X(0) = x$ with $|x-y| = k$.  In Proposition~\ref{prop::high_degree_green_bound}, we show that $G(x,y) \leq C_k d^{-k}$ where $C_k > 0$ is a constant depending only on $k$ .  By translation, we may assume without loss of generality that $|x|=k$ and $y=0$.  The idea of the proof is to first invoke Lemma~\ref{lem::escape_bound} to show that $X$ escapes to $\partial B(0,4k)$ with probability at least $1-C_{k,1} d^{-k}$.  We then decompose the path of $X$ into successive excursions $\{X(\sigma_{2k}^j),\ldots,X(\tau_{4k}^j),\ldots,X(\sigma_{2k}^{j+1})\}$ between $\partial B(0,2k)$ back to itself through $\partial B(0,4k)$.  By Lemma~\ref{lem::escape_bound}, we know that each excursion hits $0$ with probability bounded by $C_{2k,1} d^{-2k}$ and Lemma~\ref{lem::return_bound} implies that each excursion takes length $c_k d n^2$ with probability at least $p_k > 0$.  Consequently, the result follows from a simple stochastic domination argument.
}
\end{figure}

Now we are ready to prove our estimate of $G(x,y)$ when $d$ is large.

\begin{proposition}
\label{prop::high_degree_green_bound}
Suppose that $d \geq 8$.  Let $G(x,y)$ denote the Green's function for lazy random walk on $\Z_n^d$.  For each $k \in \N$ with $k \leq \tfrac{d}{8}$, there exists a constant $C_k > 0$ which does not depend on $n,d$ such that
\[ G(x,y) \leq \frac{C_k}{d^{k}} \text{ for all } x,y \in \Z_n^d \text{ with } |x-y| \geq k.\]
\end{proposition}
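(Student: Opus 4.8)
The proof will follow the strategy sketched in Figure~\ref{fig::excursions}; write $\partial B(0,r)=\{z\in\Z_n^d:|z|=r\}$. By vertex transitivity we may translate so that $y=0$, and since a walk started at $x$ with $|x|\geq k$ must pass through $\partial B(0,k)$ before reaching $0$, the strong Markov property (together with the monotonicity of $t\mapsto\sum_{s=0}^t P^s(x,0)$) gives $G(x,0)\leq\max_{z\in\partial B(0,k)}G(z,0)$; so it suffices to treat $|x|=k$. We will use repeatedly the uniform bound
\[ G(0,0)=\sum_{t=0}^{\tu}P^t(0,0)\leq C \]
for an absolute constant $C$, valid for all $n\geq 2$ and $d\geq 3$: when $n$ is not small compared with $\sqrt d$ it follows from Lemma~\ref{lem::lazy_transition} together with the transience of simple random walk on $\Z^d$, whose Green's function at the origin is $O(1)$ uniformly in $d$, the windings around the torus contributing negligibly; for small $n$ such as the hypercube it follows from a direct estimate of $P^t(0,0)$ via the eigenvalues $\tfrac12+\tfrac1{2d}\sum_i\cos(2\pi k_i/n)$. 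Since $G(x,0)\leq G(x,x)=G(0,0)\leq C$ for every $d$ by \eqref{eqn::lazy_monotone_distance} and vertex transitivity, the proposition is immediate when $8\leq d<16k$ (take $C_k\geq C(16k)^k$), so from now on we assume $d\geq 16k$; this guarantees that Lemma~\ref{lem::escape_bound} applies both with parameter $k$ (threshold $2k$) and with parameter $2k$ (threshold $4k$).

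Let $\tau_{4k}$ be the first time $|X(t)|=4k$ and split $G(x,0)$ into the visits to $0$ made before $\tau_{4k}$ and those made after. For the first part, the strong Markov property at the first visit to $0$ bounds the expected number of such visits by $\p_x[\tau_0<\tau_{4k}]$ times the expected number of visits that a walk from $0$ makes to $0$ in a time window of length at most $\tu$, hence by $\p_x[\tau_0<\tau_{4k}]\cdot G(0,0)$. Splitting $\p_x[\tau_0<\tau_{4k}]$ at the first visit to $\partial B(0,2k)$ and invoking Lemma~\ref{lem::escape_bound} with parameters $k$ and $2k$ yields $\p_x[\tau_0<\tau_{4k}]\leq C_k d^{-k}+C_{2k}d^{-2k}=:C_{k,1}d^{-k}$, so this part contributes at most $C\cdot C_{k,1}d^{-k}$ to $G(x,0)$.

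For the second part, decompose the trajectory of $X$ after time $\tau_{4k}$ into successive excursions, each running from $\partial B(0,2k)$ out to $\partial B(0,4k)$ and back to $\partial B(0,2k)$, as in Figure~\ref{fig::excursions}. By exactly the same strong Markov argument (using Lemma~\ref{lem::escape_bound} with parameter $2k$, and again $G(0,0)\leq C$ to control the visits after the first), each excursion makes, conditionally on the past, an expected number of visits to $0$ at most $C\cdot C_{2k}d^{-2k}$. On the other hand, Lemma~\ref{lem::return_bound} applied with $2k$ in place of $k$ --- bounding the time it takes $X$ to return from $\partial B(0,4k)$ to $\partial B(0,2k)$ --- shows that each excursion has length at least $c_{2k}d n^2$ with conditional probability at least $p_{2k}>0$. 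Since $\tu(\Z_n^d)=O((d\log d)n^2)$ by \eqref{eqn::uniform_asymptotics}, a routine stochastic domination argument --- comparing the number $N$ of excursions begun before time $\tu$ with the number of independent $p_{2k}$--coin tosses needed to collect $O(\log d)$ heads, and Chernoff bounding the upper tail of the latter --- gives $\E[N]=O(\log d)$ with exponential tails. By Wald's identity the expected number of visits to $0$ during these excursions is therefore $O(\log d)\cdot C\cdot C_{2k}d^{-2k}=O(d^{-2k}\log d)$, which is at most $C_k d^{-k}$ for all $d\geq 16k$ since $d^{-k}\log d$ is bounded. Adding the two parts completes the proof.

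The main obstacle is carrying out the excursion decomposition and the attendant stochastic domination with every constant independent of $n$: because $\tu$ grows with $n$ one cannot merely union bound over a bounded number of excursions, and it is essential that the per--excursion probability of visiting $0$ is $O(d^{-2k})$ rather than only $O(d^{-k})$, so that the remaining factor $d^{-k}$ of room absorbs both the $O(\log d)$ excursions that fit before $\tu$ and the fluctuations in their number. A secondary and more routine point is the uniform estimate $G(0,0)=O(1)$, which must be checked separately when $d$ is large and $n$ is small --- precisely the hypercube case that underlies Theorem~\ref{thm::cycles}.
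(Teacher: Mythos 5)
Your proof is correct and follows essentially the same route as the paper: the excursion decomposition of Figure~\ref{fig::excursions} between $\partial B(0,2k)$ and $\partial B(0,4k)$, with Lemma~\ref{lem::escape_bound} controlling the per-excursion probability of hitting $0$, Lemma~\ref{lem::return_bound} forcing excursions to be long so that only $O(\log d)$ of them fit before $\tu$, and the uniform bound $G(0,0)=O(1)$ converting hitting probabilities into Green's function bounds. The only differences are organizational (you run a Wald-type count of visits per excursion where the paper stochastically dominates $\tau_0$ from below and then bounds $\p[\tau_0\le\tu]\cdot G(x,x)$), and you are in fact slightly more careful than the paper in splitting off the regime $8k\le d<16k$ where Lemma~\ref{lem::escape_bound} with parameter $2k$ does not directly apply.
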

\begin{proof}
See Figure~\ref{fig::excursions} for an illustration of the proof.  By translation, we may assume without loss of generality that $y=0$; let $k=|x|$.  Let $\tau_0$ be the first time $t$ that $|X(t)| = 0$.  The strong Markov property implies that
\[ G(x,y) \leq \p[\tau_0 > \tu] + (1-\p[\tau_0 > \tu]) G(x,x).\]
Consequently, it suffices to show that for each $k \in \N$, there exists constants $C_k, C_0 > 0$ such that
\begin{align}
\p[\tau_0 > \tu] &\leq \frac{C_k}{d^{k}} \text{ and } \label{eqn::hit_before_uniform}\\
G(x,x) &\leq C_0 \label{eqn::green_diagonal}.
\end{align}
We will first prove \eqref{eqn::hit_before_uniform}; the proof of \eqref{eqn::green_diagonal} will be similar.

Let $N$ be a geometric random variable with success probability $C_{2k} d^{-2k}$ where $C_{2k}$ is the constant from Lemma~\ref{lem::escape_bound}.  Let $(\xi_j)$ be a sequence of independent random variables with $\p[\xi_j = c_{2k} d n^2] = p_{2k}$ and $\p[\xi_j = 0] = 1-p_{2k}$ where $c_{2k},p_{2k}$ are the constants from Lemma~\ref{lem::return_bound} independent of $N$.  We claim that $\tau_0$ is stochastically dominated from below by $\sum_{j=1}^{N \zeta -1} \xi_j$ where $\zeta$ is independent of $N$ and $(\xi_j)$ with $\p[ \zeta = 0] = C_k d^{-k} = 1-\p[\zeta=1]$.  Indeed, to see this we let $\sigma_k^0=0$ and let $\tau_{4k}^0$ be the first time $t$ that $|X(t)| = 4k$.  For each $j \geq 1$, we inductively let $\sigma_{2k}^j$ be the first time $t$ after $\tau_{4k}^{j-1}$ that $|X(t)|=2k$ and let $\tau_{4k}^j$ be the first time $t$ after $\sigma_{2k}^j$ that $|X(t)|=4k$.  Let $\CF_t$ be the filtration generated by $X$.  Lemma~\ref{lem::escape_bound} implies that the probability that $X$ hits $0$ in $\{\sigma_{2k}^j,\ldots,\tau_{4k}^j\}$ given $\CF_{\sigma_{2k}^j}$ is at most $C_{2k} d^{-2k}$ for each $j \geq 1$ where $C_{2k} > 0$ only depends on $2k$.  This leads to the success probability in the definition of $N$ above.  The factor $\zeta$ is to take into account the probability that $X$ reaches distance $2k$ before hitting $0$.  Moreover, Lemma~\ref{lem::return_bound} implies that $\p[ \sigma_{2k}^j - \tau_{4k}^{j-1} \geq c_{2k} d n^2 | \CF_{\tau_{4k}^j}] \geq p_{2k}$.  This leads to the definition of the $(\xi_j)$ above.  This implies our claim.

To see \eqref{eqn::hit_before_uniform} from our claim, an elementary calculation yields that
\[ \p[ N \zeta \leq C_{2k}^{-1} d^{k}] \leq \p[ N \leq C_{2k}^{-1} d^{k} \text{ or } \zeta = 0] \leq 2d^{-k} + C_k d^{-k}.\]
We also note that
\[\p\left[  \sum_{j=1}^{m} \xi_j \leq \frac{p_k c_k m n^2}{2} \right] \leq e^{-c m}\]
for some constant $c > 0$.  Combining these two observations along with a union bound implies \eqref{eqn::hit_before_uniform}.  To see \eqref{eqn::green_diagonal}, we apply a similar argument using the second assertion of Lemma~\ref{lem::escape_bound}.
\end{proof}

Now that we have proved Proposition~\ref{prop::low_degree_green_bound} and Proposition~\ref{prop::high_degree_green_bound}, we are ready to check the criteria of Assumption~\ref{assump::main}.

\subsection{Part \eqref{assump::main::hitting}}

By \cite[Proposition 1.14]{LPW08} with $\tau_x^+ = \min\{t \geq 1 : X(t) = x\}$, we have that $\E_x[\tau_x^+] = |\Z_n^d|$.  Applying Proposition~\ref{prop::high_degree_green_bound}, we see that there exists constants $d_0,r > 0$ such that if $d \geq d_0$, then
\begin{equation}
\label{eqn::green_half_bound}
G(x,y) \leq 1/2 \text{ for all } |x-y| \geq r.
\end{equation}
Proposition~\ref{prop::low_degree_green_bound} implies that there exists $n_0$ such that if $n \geq n_0$ and $3 \leq d < d_0$ then \eqref{eqn::green_half_bound} likewise holds, possibly by increasing $r$ (clearly, part~\eqref{assump::main::hitting} holds when $d \leq d_0$ and $n \leq n_0$; note also that we may assume without loss of generality that $d_0,n_0$ are large enough so that the diameter of the graph is at least $2r$).  Let $\tau_r$ be the first time $t$ that $|X(t) - X(0)| =r$.  We observe that there exists $\rho_0 = \rho_0(r) > 0$ such that
\begin{equation}
\label{eqn::move_away_bound}
\p_x[\tau_r < \tau_x^+] \geq \rho_0
\end{equation}
uniform in $n,d$ since in each time step there are $d$ directions in which $X(t)$ increases its distance from $X(0)$.  By combining \eqref{eqn::green_half_bound} with \eqref{eqn::move_away_bound}, we see that $\p_x[\tau_x^+ \geq \tu(\CG)] \geq \rho_1 > 0$ uniform $d \geq d_0$.  Let $\CF_t$ be the filtration generated by $X$.  We consequently have that
\begin{align*}
   \E_x[\tau_x^+]
&\geq \E_x[ \tau_x^+ \one_{\{ \tau_x^+ \geq \tu(\CG)\}}]
 = \E_x\big[ \E_x[ \tau_x^+ | \CF_{\tu(\CG)}] \one_{\{ \tau_x^+ \geq \tu(\CG)\}} \big]\\
&\geq \E_x\big[ \E_{X(\tu(\CG))}[\tau_x] \one_{\{\tau_x^+ \geq \tu(G)\}} \big]
 \geq \rho_1 \left(1-\frac{1}{2e}\right) \E_\pi[\tau_x].
\end{align*}
That is, there exists $\rho_2 > 0$ uniform in $d \geq d_0$ such that $\E_x[\tau_x^+] \geq \rho_2 \E_\pi[\tau_x]$.  Hence by \cite[Lemma 10.2]{LPW08}, we have that $\thit(\Z_n^d) \leq K_1|\Z_n^d|$ where $K_1 = 2/\rho_2$ is a uniform constant.

\begin{remark}
There is another proof of Part~\ref{assump::main::hitting} which is based on eigenfunctions.  In particular, we know that
\[ t_\hit(\Z_n^d) \leq 2 \E_\pi[\tau_x] = 4 \sum_i \frac{1}{1-\lambda_i}\]
where the $\lambda_i$ are the eigenvalues of simple random walk on $\Z_n^d$ distinct from $1$; the extra factor of $2$ in the final equality accounts for the laziness of the chain.  The $\lambda_i$ can be computed explicitly using \cite[Lemma 12.11]{LPW08} and the form of the $\lambda_i$ when $d=1$ which are given in \cite[Section 12.3]{LPW08}.  The assertion follows by performing the summation which can be accomplished by approximating it by an appropriate integral.
\end{remark}

\subsection{Part \eqref{assump::main::green_exponent}}

It follows from Proposition~\ref{prop::high_degree_green_bound} that there exist constants $C > 0$ and $d_0 \geq 3$ such that
\begin{equation}
\label{eqn::green_return_bound}
G(x,y)\leq \frac{C}{d} \text{ for } x,y \in \Z_n^d \text{ with } |x-y|=1
\end{equation}
provided $d \geq d_0$.  Consequently, there exists $K \in \N$ which does not depend on $d \geq d_0$ such that
\begin{equation}
\label{eqn::cycle_green_power_bound}
2K (5/2)^K G^K(x,y) = O\left( 2K \left(\frac{5/2}{d}\right)^K \right)
\end{equation}
It follows by combining \eqref{eqn::relaxation_asymptotics} and \eqref{eqn::uniform_asymptotics} that we have that
\begin{equation}
\label{eqn::cycle_uniform_relax_ratio}
\frac{\tu(\Z_n^d)}{\trel(\Z_n^d)} = O(\log d).
\end{equation}
Combining \eqref{eqn::cycle_green_power_bound} with \eqref{eqn::cycle_uniform_relax_ratio} shows that part \eqref{assump::main::green_exponent} of Assumption~\ref{assump::main} is satisfied provided we take $K_2 = K$ large enough.  Moreover, \eqref{eqn::cycle_uniform_relax_ratio} clearly holds if $3 \leq d < d_0$ by Proposition~\ref{prop::low_degree_green_bound}.

\subsection{Part \eqref{assump::main::green_ratio}}

We first note that it follows from \eqref{eqn::relaxation_asymptotics}, \eqref{eqn::uniform_asymptotics}, Proposition~\ref{prop::low_degree_green_bound}, and Proposition~\ref{prop::high_degree_green_bound} that there exists constants $C > 0$ such that $n^*$ for $\Z_n^d$ is at most $C d^2 n^2 \log d$ for all $d \geq 3$.  To check this part, we need to show that there exists $K_3 > 0$ such that
\begin{equation}
\label{eqn::gstar_goal}
 G^*(n^*) \leq K_3\left(\frac{dn^2 + d \log n}{ \log d + \log n}\right).
\end{equation}

We are going to prove the result by considering the regimes of $d \leq \sqrt{\log n}$ and $d > \sqrt{\log n}$ separately.
$\ $\newline

\noindent{\bf Case 1:} $d < \sqrt{\log n}$.

From \eqref{eqn::gstar_goal} it is enough to show that $G^*(n^*) \leq K d n^2 / \log n$.  We can bound $G^*(n^*)$ in this case as follows.  Let $D = (d \log d \log n)^{1/(\tfrac{1}{2}d-1)}$.  By Proposition~\ref{prop::low_degree_green_bound}, we can bound from above the expected amount of time that $X$ starting at $0$ in $\Z_n^d$ spends in the $L^1$ ball of radius $D$ by summing radially:
\begin{align*}
        & \sum_{k=1}^D \frac{C_1}{d} \left(\frac{4d}{\pi}\right)^{d/2} k^{1-d/2} \cdot 2d (2k)^{d-1}\\
 \leq& C_1 \left(\frac{16d}{\pi}\right)^{d/2} \sum_{k=1}^D  k^{d/2}
 \leq \frac{C_2}{d}\left(\frac{16 d}{\pi}\right)^{d/2} D^{1+d/2} \leq C_3 n(d \log d \log n)^5
\end{align*}
for constants $C_1,C_2,C_3 > 0$, where we used that $d^{d/2} \leq n$.  We also note that $2d(2k)^{d-1}$ is the size of the $L^\infty$ ball of radius $k$.  The exponent of $5$ comes from the inequality
\[ \frac{\tfrac{1}{2} d + 1}{\tfrac{1}{2}d-1} \leq 5 \text{ for all } d \geq 3.\]
We can estimate $G^*(n)$ by dividing between the set of points which have distance at most $D$ to $0$ and those whose distance to $0$ exceeds $D$ by:
\begin{align*}
     G^*(n^*)
\leq& C_3 n (d \log d \log n)^5 + C_4 D^{1-\tfrac{1}{2}d} n^*\\
\leq&  C_3 n (d \log d \log n)^5 + \frac{C_4 \cdot C d^2 n^2 \log d}{d \log d \log n},
\end{align*}
where $C_4 > 0$ is a constant and we recall that $C > 0$ is the constant from the definition of $n^*$.
This implies the desired result.
$\ $\newline

\noindent{\bf Case 2:} $d \geq \sqrt{\log n}$.

In this case, we are going to employ Proposition~\ref{prop::high_degree_green_bound} to bound $G^*(n^*)$.  The number of points which have distance at most $k$ to $0$ is clearly $1+(2d)^k$.  Consequently, by Proposition~\ref{prop::high_degree_green_bound}, we have that
\begin{align*}
G^*(n^*) \leq& \left(C_0 +  \sum_{k=1}^3 C_k d^{-k}(2d)^k \right)  + C_4 d^{-4}  n^*\\
   \leq& C_5 +  \frac{C_6 (\log d)  n^2}{d^2}
\end{align*}
for some constants $C_5, C_6 > 0$.  Since $d^2 \geq \log n$, this is clearly dominated by the right hand side of \eqref{eqn::gstar_goal} (with a large enough constant), which completes the proof in this case.

\qed

\section{Coverage Estimates}
\label{sec::estimates}

Throughout, we assume that $\CG$ is a finite, connected, vertex transitive graph and $X$ is lazy random walk on $\CG$ with transition matrix $P$ and stationary measure $\pi$.  For $S \subseteq V(\CG)$, we let $\CC_S(t)$ be the set of vertices in $S$ visited by $X$ by time $t$ and let $\CU_S(t) = S \setminus \CC_S(t)$ be the subset of $S$ which $X$ has not visited by time $t$.  We let $\CC(t) = \CC_{V(\CG)}(t)$ and $\CU(t) = \CU_{V(\CG)}(t)$.  We will use $\p_x,\E_x$ to denote the probability measure and expectation under which $X(0) = x$.  Likewise, we let $\p_\pi,\E_\pi$ correspond to the case that $X$ is initialized at stationarity.  The purpose of this section is to develop a number of estimates which will be useful for determining the amount of time required by $X$ in order to cover subsets $S$ of $V(\CG)$.  We consider two different regimes depending on the size of $S$.  If $S$ is large, we will estimate the amount of time it takes for $X$ to visit $\tu(\CG)$ distinct vertices in $S$.  If $S$ is small, we will estimate the amount of time it takes for $X$ to visit $1/2$ of the vertices in $S$.

\subsection{Large Sets}
\label{subsec::large_estimates}

In this subsection, we will prove that the amount of time it takes for $X$ to visit $\tu(\CG)$ distinct elements of a large set of vertices $S \subseteq V(\CG)$ is stochastically dominated by a geometric random variable whose parameter depends on $\tu(\CG)/\trel(\CG)$.  The main result is:

\begin{proposition}
\label{prop::start}
Assume $X$ satisfies part \eqref{assump::main::green_exponent} of Assumption~\ref{assump::main} with constants $K_2,K_3$.  Let $S \subseteq V(\CG)$ consist of at least $2K_2 \tu(\CG) / G(x,y)$ elements for $x,y \in V(\CG)$ adjacent and let
\[ t=\frac{2(K_2+2) \tu(\CG)}{\pi(S)}.\]
There exists a universal constant $C > 0$ such that for every $x \in V(\CG)$, we have that
\[ \p_x\left[\CC_S(t) \leq \tu(\CG) \right] \leq \exp\left(- C \frac{\tu(\CG)}{\trel(\CG)} \right).\]
\end{proposition}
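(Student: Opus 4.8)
The plan is to show that the time $\tau$ needed for $X$ to visit $\tu := \tu(\CG)$ distinct vertices of $S$ is stochastically dominated by a geometric-type random variable, and then translate that into the stated exponential bound at the specific time $t = 2(K_2+2)\tu/\pi(S)$. The starting point is Proposition~\ref{prop::local_time}: running from stationarity, the local time $\CL_S(s)$ is at least $s\pi(S)/2$ except with probability $\exp(-C_0 s\pi(S)/\trel(\CG))$. The subtlety is that large local time in $S$ does not by itself mean many \emph{distinct} vertices of $S$ are visited — the walk could accumulate all its time in $S$ on a handful of vertices. This is exactly where part~\eqref{assump::main::green_exponent} of Assumption~\ref{assump::main} and the quantity $G^*(n^*)$ enter.

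The key steps, in order, are as follows. First, handle the dependence on the starting point $x$: since $\thit(\CG)\leq K_1|\CG|$ (or more simply, since $\tmix(\CG)$ is negligible compared to $\tu(\CG)$), after a negligible burn-in the law of $X$ is close to $\pi$, so it suffices to prove the estimate started from $\pi$ and then absorb the constant. Second, apply Proposition~\ref{prop::local_time} with $s = t = 2(K_2+2)\tu/\pi(S)$: with probability at least $1 - \exp(-C_0 t\pi(S)/\trel) = 1 - \exp(-2C_0(K_2+2)\tu/\trel)$, we have $\CL_S(t) \geq t\pi(S)/2 = (K_2+2)\tu$. Third — the heart of the argument — show that on this event, the number of distinct vertices of $S$ visited is at least $\tu$. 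Suppose not: then $X$ spends at least $(K_2+2)\tu$ units of time among at most $\tu$ vertices of $S$. Letting $S' \subseteq S$ be the (random) set of visited vertices, $|S'|\le \tu$, the expected time spent in $S'$ up to time $\tu$ started from any vertex $z\in S'$ is at most $G^*(\tu) \le G^*(n^*)$ since $\tu \le n^* = 4K_2\tu/G(x,y)$ (here $G(x,y)\le $ some bound; for $x,y$ adjacent $G(x,y)$ is of order $1$ or smaller, so $n^* \gtrsim \tu$). By a union bound / repeated-excursion argument over the at most $\tu$ possible "fresh starts," the probability that the local time in a set of size $\le \tu$ exceeds $(K_2+2)\tu$ decays like $(\text{something}/(K_2+2))^{K_2}$ times a combinatorial factor, which is controlled precisely by the hypothesis $2K_2(5/2)^{K_2}(G(x,y))^{K_2} \le \exp(-\tu/\trel)$. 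Combining the two small probabilities via a union bound yields $\p_x[\CC_S(t) \le \tu] \le \exp(-C\tu/\trel)$ for a universal $C$.

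I expect the main obstacle to be Step~3: converting "large total local time in $S$" into "many distinct vertices visited." The clean way is an excursion decomposition — each time $X$ enters $S$, run an excursion; the number of excursions needed to accumulate time $(K_2+2)\tu$ while touching only $\le \tu$ vertices forces, by pigeonhole, some vertex to be revisited many times, and the tail of the number of returns to a fixed vertex before time $\tu$ is geometric with a parameter governed by $G(x,x)$ (hence by $G^*$). Quantifying this so that the resulting bound matches the exponent $K_2$ in Assumption~\ref{assump::main}\eqref{assump::main::green_exponent} — in particular getting the factor $(5/2)^{K_2}$ and $G(x,y)^{K_2}$ to appear — is the delicate bookkeeping step; the factor $5/2$ presumably arises as a ratio like $(K_2+2)/K_2 \le 5/2$ (valid for $K_2 \ge 1$, or more comfortably for $K_2$ large) in a Markov-inequality bound on a sum of geometric return counts. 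Everything else (the burn-in, the application of Proposition~\ref{prop::local_time}, the final union bound) is routine.
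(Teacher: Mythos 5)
Your overall architecture matches the paper's: a burn-in of length $\tu(\CG)$ to reduce to the stationary start (costing a factor $1+(2e)^{-1}$), Proposition~\ref{prop::local_time} to guarantee $\CL_S(t)\ge t\pi(S)/2$ up to an error $\exp(-C_0 t\pi(S)/\trel(\CG))$, and then a ``deconcentration'' step converting large local time in $S$ into many distinct visited vertices, calibrated against Assumption~\ref{assump::main}\eqref{assump::main::green_exponent}. You also correctly identify the deconcentration step as the crux. However, the argument you sketch for that step has a quantitative gap that I do not think survives as stated.

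Your Step 3 reasons: if $\CC_S(t)\le\tu(\CG)$ while $\CL_S(t)\ge(K_2+2)\tu(\CG)$, then by pigeonhole some vertex is revisited many times, and you union bound over the candidate over-visited vertices. The probability that a \emph{fixed} $z$ is hit and then revisited $K_2+1$ further times is at most $t\pi(z)\,q^{K_2+1}$, where $q\le\tfrac{5}{2}(G(x,y)-1)$ is the per-return probability, so the union bound yields roughly $t\pi(S)\,q^{K_2+1}=2(K_2+2)\tu(\CG)\,q^{K_2+1}$. Assumption \eqref{assump::main::green_exponent} controls $2K_2 q^{K_2}\le\exp(-\tu(\CG)/\trel(\CG))$, so your bound is off by a factor of order $\tu(\CG)\,q$, and the factor $\tu(\CG)$ cannot be absorbed into $\exp(-C\tu(\CG)/\trel(\CG))$: for $\Z_n^d$ with $d$ fixed and $n\to\infty$ one has $\tu/\trel=O(1)$ while $\tu\to\infty$. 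The paper's Lemma~\ref{lem::deconc} avoids this by not demanding a single heavily-visited vertex: it bounds the expected \emph{total} time $\CL_{S,k}^Y(t)$ spent at vertices visited more than $k=K_2$ times by $t\pi(S)q^{K_2}(t)$, applies Markov's inequality at threshold $\tu(\CG)$ --- which supplies the crucial extra factor $1/\tu(\CG)$ --- and then uses the deterministic pigeonhole inequality $\CC_S(t)\ge\bigl(\CL_S^Y(t)-\CL_{S,K_2}^Y(t)\bigr)/K_2$. Two further corrections: the revisit counts must be taken for the \emph{non-lazy} walk $Y$ (for the lazy walk the probability of ``returning'' is at least $1/2$ by laziness, so no small $q$ exists), which forces an additional large-deviations comparison of $\CL_S^Y$ with $\CL_S$; and the relevant Green's quantity here is the single-vertex return term $q(t)$, with the $5/2$ arising from $2+(2e)^{-1}\le 5/2$ after absorbing the stationary correction $(1+(2e)^{-1})(t-\tu)/|\CG|$, not from $(K_2+2)/K_2$ and not from $G^*(n^*)$, which the paper uses only in the small-set regime.
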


Recall that
\[ \CL_S(t) = \sum_{s=0}^t \ind_{\{X(s) \in S\}}\]
is the amount of time that $X$ spends in $S$ up to time $t$.  The proof consists of several steps.  The first is Proposition~\ref{prop::local_time}, which we will deduce from \cite[Theorem 1]{LP04} shortly, which gives that the probability $\CL_S(t)$ is less than $1/2$ its mean is exponentially small in $t$.  Once we show that $\CL_S(t)$ is large with high probability, in order to show that $X$ visits many vertices in $S$, we need to rule out the possibility of $X$ concentrating most of its local time in a small subset of $S$.  This is accomplished in Lemma~\ref{lem::deconc}.  We now proceed to the proof of Proposition~\ref{prop::local_time}.

\begin{proof}[Proof of Proposition~\ref{prop::local_time}]
We rewrite the event
\begin{equation}
\label{eqn::event}
 \left\{ \CL_S(t) \leq  t \frac{\pi(S)}{2}\right\}= \left\{ \sum_{s=0}^t f(X_s) > t\left(1-\pi(S) + \frac{\pi(S)}{2}\right)\right\}
\end{equation}
where $f(x)= \ind_{S^c}(x)$.
Let $\epsilon = \pi(S)/2$ and $\mu = \E_\pi[f(X(t))] = 1-2\epsilon$.  The case $\epsilon \geq 1/4$ follows immediately from \cite[Equation 3]{LP04} in the statement of \cite[Theorem 1]{LP04}, so we will only consider the case $\epsilon \in (0,1/4)$ here.
Let $\ol{\mu} = 1-\mu = 2\epsilon$.  For $x \in (0,1)$, let
\[ I(x) = -x \log\left( \frac{\mu + \ol{\mu} \lambda_0}{1-2\ol{x}/(\sqrt{\Delta} + 1)} \right) - \ol{x} \log \left( \frac{\ol{\mu} + \mu \lambda_0}{1-2x /( \sqrt{\Delta}+1)} \right)\]
where $\ol{x} = 1-x$ and
\begin{equation}
\label{eqn::delta_definition}
 \Delta = 1 + \frac{4\lambda_0 x \ol{x}}{\mu \ol{\mu}(1-\lambda_0)^2}.
\end{equation}
For $x \in [\mu,\mu+\epsilon] = [1-2\epsilon,1-\epsilon]$, $\epsilon \in (0,1/4)$, and $\lambda_0 \geq 1/2$, we note that
\begin{equation}
\label{eqn::delta_bound}
 \frac{1/2}{ (1-\lambda_0)^2} \leq \Delta \leq \frac{20}{(1-\lambda_0)^2}
\end{equation}
By \cite[Theorem 1]{LP04} and using the representation \eqref{eqn::event}, we have that
\[ \p_\pi\left[\CL_S(t) \leq t \epsilon \right] \leq \exp(- I(\mu+\epsilon) t).\]
Since $I(\mu) = I'(\mu) = 0$ and $I''(x) = (\sqrt{\Delta} x\ol{x})^{-1}$ (see \cite[Appendix B]{LP04}), we can write
\begin{equation}
\label{eqn::rate_function_integral}
 I(\mu+\epsilon) = \int_\mu^{\mu+\epsilon} \int_\mu^x \frac{1}{\sqrt{\Delta} y \ol{y}} dy dx
\end{equation}
where $\ol{y} = 1-y$.  Inserting the bounds from \eqref{eqn::delta_bound}, we thus see that the right side of \eqref{eqn::rate_function_integral} admits the lower bound
\[ \frac{1-\lambda_0}{\sqrt{20}} \int_{1-2\epsilon}^{1-\epsilon} \int_{1-2\epsilon}^x \frac{1}{2\epsilon} dy dx \geq \frac{(1-\lambda_0)\epsilon}{16\sqrt{5}} \]
for all $\epsilon \in (0,1/4)$ and $\lambda_0 \geq \tfrac{1}{2}$.
\end{proof}

As in the proof of Lemma~\ref{lem::lazy_transition}, we couple $X$ with a non-lazy random walk $Y$ so that $X(t) = Y(N_t)$ where $N_t = \sum_{i=0}^t \xi_i$ and the $(\xi_i)$ are iid with $\p[\xi_i = 0] = \p[\xi_i = 1] = \tfrac{1}{2}$ and are independent of $Y$.  We let $\CL_S^Y(t)$ denote the amount of time that $Y|_{[0,N_t]}$ spends in $S$ (note that this differs slightly from the definition of $\CL_x^Y(t)$ which appeared in Section~\ref{sec::examples}).  In other words, $\CL_S^Y$ is the amount of that $X$ spends in $S$ by time $t$, not including those times where $X$ does not move.  The next lemma gives a lower bound on the probability that the number $\CC_S(t)$ of distinct vertices $X$ visits in a given set $S \subseteq V(\CG)$ by time $t$ is proportional to $\CL_S^Y(t)$.  The lower bound for this probability will be given in terms of the Green's function $G(x,y)$ for $X$.  Recall its definition from \eqref{eqn::green_definition}.  Since $X$ is a lazy random walk, we also have that
\begin{equation}
\label{eqn::green_function_diagonal_bound}
 G(x,y) \leq G(x,x) \text{ for all } x,y \in V(\CG).
\end{equation}
This is a consequence of \eqref{eqn::lazy_monotone_distance}.

\begin{lemma}
\label{lem::deconc}
Fix $S \subseteq V(\CG)$.  For each positive integer $k$, we have that
\begin{equation}
 \p_\pi \left[ \CC_S(t) \geq  \frac{\CL_S^Y(t) - \tu(\CG)}{k} \right]  \geq  1- \frac{t \pi(S)q^k(t)}{\tu(\CG)},
\end{equation}
where
\begin{equation}
\label{eqn::q_definition}
 q(t) = (G(x,y)-1)+(1+(2e)^{-1})\frac{t-\tu(\CG)}{|\CG|}\ind_{\{t>\tu(\CG)\}}.
\end{equation}
and $x$ is adjacent to $y$.
\end{lemma}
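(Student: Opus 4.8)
The plan is to control, for each vertex $y\in S$, the total local time that $X$ accumulates at $y$, and to show that a typical vertex visited by $X$ contributes only a bounded amount of local time, so that the number $\CC_S(t)$ of distinct vertices visited is comparable to $\CL_S^Y(t)$ divided by that bound. The quantity $q(t)$ in \eqref{eqn::q_definition} is exactly a uniform upper bound for the expected local time $X$ spends at a vertex $y$ during one ``excursion'' started from a neighbor of $y$, \emph{conditioned on not returning to $y$ until after time $\tu(\CG)$} — the first term $G(x,y)-1$ being the Green's function contribution up to time $\tu(\CG)$ (minus the initial visit), and the second term $(1+(2e)^{-1})(t-\tu(\CG))/|\CG|$ accounting for the additional time between $\tu(\CG)$ and $t$, where the walk is within a factor $1+(2e)^{-1}$ of its stationary distribution $\pi(y)=1/|\CG|$ by the definition of $\tu(\CG)$.

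First I would enumerate the successive visits of $X$ to $S$: let $Z_1,Z_2,\dots$ be the vertices of $S$ visited at the distinct times $X$ is in $S$ (so there are $\CL_S^Y(t)$ of them up to time $t$), and for a fixed $y\in S$ call an index $i$ a ``fresh return to $y$'' if $Z_i=y$ but the walk has spent at least time $\tu(\CG)$ outside $\{y\}$ since its previous visit to $y$ (or $i$ is the first visit to $y$). The key observation is that the number of \emph{distinct} vertices of $S$ visited, $\CC_S(t)$, is at least $(1/k)$ times the number of visits $i\le \CL_S^Y(t)$ with the following property: between visit $i$ and the next fresh return to $Z_i$, fewer than $k$ visits to $S$ occur. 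I would then bound the number of ``bad'' visits — those for which $k$ or more visits to $S$ pile up before the next fresh return to the same vertex — by a union bound over vertices $y\in S$ and over the (at most $t\pi(S)/\tu(\CG)$ in expectation, by Proposition~\ref{prop::local_time}-type counting, but more directly by the expected number of fresh returns to each $y$) excursions, each of which independently accumulates $\ge k$ further visits to $S$ before the next fresh return with probability at most $q^k(t)$; here the geometric decay $q^k(t)$ comes from iterating the one-step estimate that, after a fresh return to $y$, the conditional expected number of additional visits to $S$ before leaving a $\tu(\CG)$-window is at most $q(t)$, combined with a Markov-type inequality at each level to get a product.

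The main technical work — and the step I expect to be the main obstacle — is making precise the stochastic-domination argument that the chain of successive visits to $S$, read between fresh returns, is dominated by a branching-type or geometric structure with ratio $q(t)$, so that the probability of $\ge k$ visits before the next fresh return is at most $q^k(t)$. This requires the strong Markov property applied at each fresh return, the monotonicity bound \eqref{eqn::green_function_diagonal_bound} (so that visits concentrate at worst like $G(x,x)$ at the worst vertex), and the $\tu(\CG)$-mixing estimate to handle the tail of the excursion beyond time $\tu(\CG)$ — which is exactly where the second term of $q(t)$ enters, and why the indicator $\ind_{\{t>\tu(\CG)\}}$ appears. Once this domination is in place, taking expectations and summing the geometric failure probabilities $q^k(t)$ over the $\le t\pi(S)/\tu(\CG)$ expected fresh returns (using $\E_\pi[\CL_S^Y(t)]\le t\pi(S)$ and that each fresh return consumes at least $\tu(\CG)$ units of local time away from its vertex) yields the claimed bound $1-t\pi(S)q^k(t)/\tu(\CG)$, and the event $\{\CC_S(t)\ge (\CL_S^Y(t)-\tu(\CG))/k\}$ follows because on the complement of the bad event every block of $k$ consecutive visits to $S$ contains a fresh vertex, with the $-\tu(\CG)$ correction absorbing the final incomplete excursion.
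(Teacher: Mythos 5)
Your opening intuition is the right one---control the local time at each vertex and conclude that $\CC_S(t)$ is at least $\CL_S^Y(t)/k$ up to the mass carried by over-visited vertices---but the excursion machinery you build on top of it contains a genuine error. The quantity $q(t)$ bounds the expected number of returns to a \emph{single fixed vertex} by time $t$: the term $G(x,y)-1$ handles times up to $\tu(\CG)$, and the term $(1+(2e)^{-1})(t-\tu(\CG))/|\CG|$ handles the remaining times, where $P^s(\cdot,y)\le(1+(2e)^{-1})\pi(y)$. It does \emph{not} bound ``the conditional expected number of additional visits to $S$'' in a $\tu(\CG)$-window, which is what your geometric decay $q^k(t)$ per excursion requires; that quantity is of order $\max_{z\in S}\sum_{y\in S}G(z,y)$, i.e.\ $G^*(|S|)$, which for large $S$ is far bigger than $q(t)$, so the claimed stochastic domination fails. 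Your counting step is also unproven and dubious: a vertex visited exactly once can be followed by arbitrarily many visits to \emph{other} points of $S$ before the (possibly nonexistent) next fresh return to it, so it would be ``bad'' in your scheme even though it contributes one unit to both $\CC_S(t)$ and $\CL_S^Y(t)$.

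The repair is to drop the excursion decomposition entirely and argue vertex by vertex. The strong Markov property gives $\p_x[\CL_x^Y(t)>k]\le q^k(t)$ directly, since each further return to $x$ occurs with conditional probability at most $q(t)$. Set $\CL_{S,k}^Y(t)=\sum_{x\in S}\CL_x^Y(t)\one_{\{\CL_x^Y(t)>k\}}$, the local time accumulated at over-visited vertices; using $\p_\pi[\tau_x=s]\le\pi(x)$ and the tail bound above, one gets $\E_\pi[\CL_{S,k}^Y(t)]\le t\pi(S)q^k(t)$, and Markov's inequality with threshold $\tu(\CG)$ gives $\p_\pi[\CL_{S,k}^Y(t)\ge\tu(\CG)]\le t\pi(S)q^k(t)/\tu(\CG)$. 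Finally, the deterministic inequality $\CC_S(t)=\sum_{x\in S}\one_{\{\CL_x^Y(t)\ge1\}}\ge\bigl(\CL_S^Y(t)-\CL_{S,k}^Y(t)\bigr)/k$ (each vertex with $1\le\CL_x^Y(t)\le k$ contributes at least $\CL_x^Y(t)/k$ to the left side) converts the bound on $\CL_{S,k}^Y(t)$ into the statement of the lemma. This is where the $-\tu(\CG)$ actually comes from---the Markov threshold on $\CL_{S,k}^Y(t)$---not from a final incomplete excursion.
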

\begin{proof}
For $t \geq \tu(\CG)$, we have $P^t(x,y) \leq (1+(2e)^{-1})\pi(y)$ by the definition of $\tu(\CG)$.
Thus by a union bound,
\[ \p_x[\CL_x^Y(t) > 1] \leq q(t).\]
Hence by the strong Markov property,
\[ \p_x[\CL_x^Y(t) > k] \leq q^k(t).\]
Observe
\begin{equation}
\label{eqn::simple_hitting_bound}
\p_\pi[\tau_x = s] \leq \p_\pi[X_s = x] \leq \pi(x).
\end{equation}
Let
\[ \CL_{S,k}^Y(t) = \sum_{x \in S} \CL_x^Y(t) \one_{\{\CL_x^Y(t) > k\}}\]
be the total time that $Y$ spends at points in $S$ which it visits more than $k$ times by time $N_t$.  By \eqref{eqn::simple_hitting_bound}, we have that
\begin{align*}
      \E_\pi[\CL_{S,k}^Y(t)]
 \leq \sum_{x \in S} \sum_{s=0}^{t} \p_\pi[\tau_x=s] q^k(t)
 \leq t \pi(S) q^k(t).
\end{align*}
Applying Markov's inequality we have that
\begin{align*}
  \p_\pi\left[ \CL_{S,k}^Y(t) \geq \tu(\CG)  \right]
 &\leq \frac{\E_\pi[ \CL_{S,k}^Y(t)]}{\tu(\CG)}
 \leq \frac{t \pi(S) q^k(t)}{\tu(\CG)}
\end{align*}
Observe
\begin{align*}
  \CC_S(t)
= \sum_{x \in S} \one_{\{\CL_x^Y(t) \geq 1\}}
\geq \sum_{x \in S} \big(\one_{\{\CL_x^Y(t) \geq 1\}} - \one_{\{\CL_x^Y(t) > k\}}\big)
 \geq \frac{\CL_S^Y(t) - \CL_{S,k}^Y(t)}{k}.
\end{align*}
Thus
\[ \left\{ \CL_{S,k}^Y(t) < \tu(\CG) \right\} \subseteq \left\{ \CC_S(t) \geq \frac{\CL_S^Y(t) - \tu(\CG)}{k} \right\}.\]
We arrive at
\begin{align*}
      \p_\pi\left[\CC_S(t) \geq \frac{\CL_S^Y(t) - \tu(\CG)}{k}\right]
&\geq 1-\p_\pi\left[ \CL_{S,k}^Y(t) \geq \tu(\CG) \right]
 \geq 1- \frac{t \pi(S) q^k(t)}{\tu(\CG)},
\end{align*}
which completes the proof of the lemma.
\end{proof}

Proposition~\ref{prop::local_time} gives a lower bound on the probability $\CL_S(t)$ is proportionally lower than its expectation, Lemma~\ref{lem::deconc} gives a lower bound on the probability $X$ visits less than a positive fraction of $\CL_S^Y(t) - \tu(\CG)$ vertices in $S$ by time $t$, and standard large deviations estimates bound the probability that $\CL_S^Y(t)$ is proportionally smaller than $\CL_S(t)$.  By combining these two lemmas, we obtain the following result, which gives a lower bound on the rate at which $X$ covers vertices in $S$.
\begin{lemma}
\label{lem::main}
Fix $S \subseteq V(\CG)$. Then
\begin{align}
 \label{eqn::coverage_lower_bound}
&\p_\pi\left[ \CC_S(t) \leq \frac{t\pi(S) - 8\tu(\CG)}{8k} \right]\\
 &\ \ \ \ \ \ \ \ \leq \exp\left(-C_0 t\frac{\pi(S)}{\trel(\CG)}\right) + \exp\left(-\frac{1}{16} t \pi(S)\right)  + \frac{t\pi(S) q^k(t)}{\tu(\CG)} \notag
\end{align}
where the constant $C_0$ is as in Proposition~\ref{prop::local_time} and the function $q$ is as in \eqref{eqn::q_definition}.
\end{lemma}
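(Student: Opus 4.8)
The plan is to chain together Proposition~\ref{prop::local_time}, a large-deviations comparison between $\CL_S(t)$ and $\CL_S^Y(t)$, and Lemma~\ref{lem::deconc}, controlling the failure probability of each link by one of the three terms on the right-hand side of \eqref{eqn::coverage_lower_bound}. First I would introduce the three ``good'' events: (i) $A_1 = \{\CL_S(t) > t\pi(S)/2\}$, (ii) $A_2 = \{\CL_S^Y(t) \geq \tfrac12 \CL_S(t)\}$, and (iii) $A_3 = \{\CC_S(t) \geq (\CL_S^Y(t) - \tu(\CG))/k\}$. On $A_1 \cap A_2 \cap A_3$ we get $\CC_S(t) \geq (\tfrac12 \cdot \tfrac12 t\pi(S) - \tu(\CG))/k = (t\pi(S) - 4\tu(\CG))/(4k)$, which is at least $(t\pi(S) - 8\tu(\CG))/(8k)$ whenever $t\pi(S) \geq 8\tu(\CG)$; if $t\pi(S) < 8\tu(\CG)$ the claimed bound on the right is $\geq 0$ in the relevant range and a trivial check handles it. So it suffices to bound $\p_\pi[A_1^c] + \p_\pi[A_2^c] + \p_\pi[A_3^c]$ by the three terms.

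The first term comes directly from Proposition~\ref{prop::local_time}: $\p_\pi[A_1^c] \leq \exp(-C_0 t \pi(S)/\trel(\CG))$, using the standing hypothesis $\lambda_0 \geq 1/2$ (which is harmless since the chain is lazy, so $\lambda_0 \geq 1/2$ always). The third term is exactly the bound from Lemma~\ref{lem::deconc}: $\p_\pi[A_3^c] \leq t\pi(S) q^k(t)/\tu(\CG)$. The middle term requires the observation that, conditional on the walk $X$, $\CL_S^Y(t)$ is obtained from $\CL_S(t)$ by discarding the lazy (non-moving) steps among those visits: writing $\CL_S(t) = \sum_{s=0}^t \ind_{\{X(s)\in S\}}$, each such index $s$ contributes to $\CL_S^Y(t)$ precisely when step $s$ was a genuine move of $Y$, i.e.\ when the independent coin $\xi_s$ equals $1$. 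Hence, given $X$, $\CL_S^Y(t)$ stochastically dominates a $\mathrm{Binomial}(\CL_S(t), \tfrac12)$ variable (with a slight care about the $s=0$ term, which only helps). On the event $A_1$ we have $\CL_S(t) > t\pi(S)/2$, so by Hoeffding's inequality applied to the binomial,
\[
\p_\pi[A_2^c \cap A_1] \leq \p\Big[ \mathrm{Bin}\big(\lceil t\pi(S)/2\rceil, \tfrac12\big) < \tfrac14 t\pi(S) \Big] \leq \exp\left(-\tfrac{1}{16} t\pi(S)\right),
\]
using the standard bound $\p[\mathrm{Bin}(m,\tfrac12) < m/2 - a] \leq \exp(-2a^2/m)$ with $m \approx t\pi(S)/2$ and $a \approx t\pi(S)/8$, which gives exponent $\approx t\pi(S)/16$. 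A union bound over $A_1^c, A_2^c \cap A_1, A_3^c$ then yields \eqref{eqn::coverage_lower_bound}.

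The main obstacle is the middle step: making the coupling bookkeeping between $\CL_S(t)$ and $\CL_S^Y(t)$ precise enough that the conditional-binomial domination is rigorous — in particular, being careful that the indices $s$ with $X(s)\in S$ are determined by $X$ alone (hence $\CF^X$-measurable), so that the coins $(\xi_s)$ restricted to those indices are still i.i.d.\ fair and independent of which indices were selected. Everything else is either a direct citation (Proposition~\ref{prop::local_time}, Lemma~\ref{lem::deconc}) or a routine Hoeffding estimate; I would keep the exposition short and fold the off-by-one issues at $s=0$ into a remark that they only improve the bound.
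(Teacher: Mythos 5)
Your overall architecture matches the paper's: a three-way union bound combining Proposition~\ref{prop::local_time}, a lazy/non-lazy comparison between $\CL_S(t)$ and $\CL_S^Y(t)$, and Lemma~\ref{lem::deconc}, with the three failure probabilities matching the three error terms. The first and third links are fine. The middle link, however, has a genuine gap, and it is exactly the step you flagged as the main obstacle. First, the event $A_2=\{\CL_S^Y(t)\geq\tfrac12\CL_S(t)\}$ uses the wrong threshold: each step of the lazy walk is a genuine move with probability $\tfrac12$, so $\E[\CL_S^Y(t)]\approx\tfrac12\E[\CL_S(t)]$ (conditionally on $\CL_S^Y(t)=\ell$, $\CL_S(t)$ is, up to edge effects, a sum of $\ell$ i.i.d.\ Geometric$(\tfrac12)$ block lengths of mean $2$), and hence $A_2^c$ is a median-type event of probability close to $\tfrac12$, not exponentially small. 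Your own Hoeffding application betrays this: the displayed threshold $\tfrac14 t\pi(S)$ equals the mean of $\mathrm{Bin}(\lceil t\pi(S)/2\rceil,\tfrac12)$, so the deviation $a$ there is $0$, while the exponent you quote corresponds to the threshold $\tfrac18 t\pi(S)$. You need $A_2=\{\CL_S^Y(t)\geq\tfrac14\CL_S(t)\}$, or, as the paper does, compare $\CL_S^Y(t)$ directly to $\tfrac{t}{8}\pi(S)$; either still suffices for the conclusion since the target is $(t\pi(S)-8\tu(\CG))/(8k)$.

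Second, your justification of the binomial domination is circular: conditional on the full path of $X$, the coins $(\xi_s)$ are \emph{determined}, not i.i.d.\ fair --- since $Y$ is non-lazy, $\xi_s=0$ exactly when $X(s)=X(s-1)$ --- so the fact that the visit times are $\CF^X$-measurable does not make the selected coins independent of the selection; given $X$, the quantity $\CL_S^Y(t)$ is a constant. The correct conditioning is on $Y$: the visits of $X$ to $S$ decompose into $\CL_S^Y(t)$ blocks whose lengths are i.i.d.\ Geometric$(\tfrac12)$ (truncation at the horizon $t$ only helps), so $\{\CL_S(t)>\tfrac{t}{2}\pi(S),\ \CL_S^Y(t)\leq\tfrac{t}{8}\pi(S)\}$ forces at least $\tfrac{3t}{8}\pi(S)$ lazy repetitions within at most $\tfrac{t}{8}\pi(S)$ blocks, a negative-binomial large-deviation event of probability at most $\exp(-\tfrac{1}{16}t\pi(S))$; this is the route the paper takes. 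A minor further point: laziness gives $\lambda_0\geq 0$, not $\lambda_0\geq\tfrac12$ (the lazy walk on the complete graph has $\lambda_0<\tfrac12$), so the appeal to Proposition~\ref{prop::local_time} carries that hypothesis with it, though the paper shares this imprecision.
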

\begin{proof}
We trivially have that
\begin{align*}
 &\p_\pi\left[ \CC_S(t) \geq \frac{t\pi(S) - 8\tu(\CG)}{8k} \right]
\geq \p_\pi\left[ \CC_S(t) \geq \frac{t\pi(S) - 8\tu(\CG)}{8k},\ \CL_S^Y(t) > \frac{t}{8} \pi(S)\right]\\
\geq& \p_\pi\left[ \CC_S(t) > \frac{\CL_S^Y(t) - \tu(\CG)}{k},\ \CL_S^Y(t) > \frac{t}{8} \pi(S)\right].
\end{align*}
Therefore
\begin{align*}
\p_\pi\left[ \CC_S(t) \leq \frac{t\pi(S) - 8\tu(\CG)}{8k} \right] \leq
\p_\pi\left[ \CL_S^Y(t) \leq \frac{t}{8} \pi(S)\right] + \p_\pi\left[ \CC_S(t) \leq \frac{\CL_S^Y(t) - \tu(\CG)}{k} \right]
\end{align*}
We can bound the second term from above by Lemma~\ref{lem::deconc}.  The first term is bounded from above by
\[ \p_\pi\left[ \CL_S^Y(t) \leq \frac{t}{8} \pi(S)\right]
  \leq \p_\pi\left[ \CL_S(t) \leq \frac{t}{2} \pi(S)\right] +
		\p_\pi\left[ \CL_S(t) > \frac{t}{2} \pi(S),\ \ \CL_S^Y(t) \leq \frac{t}{8} \pi(S)\right].\]
We can bound the first term using Proposition~\ref{prop::local_time}.  Conditionally on $\{\CL_S(t) > \tfrac{t}{2}\pi(S)\}$, we note that $\{\CL_S^Y(t) \leq \tfrac{t}{8}\pi(S)\}$ occurs if $X$ stays in place for at least $\tfrac{3t}{8}\pi(S)$ time steps.  Consequently, standard large deviations estimates imply that the second term above is bounded by $\exp(-\tfrac{1}{16} t \pi(S))$.
\end{proof}

We can now easily complete the proof of Proposition~\ref{prop::start} by ignoring the first $\tu(\CG)$ units of time in order to reduce to the stationary case, then apply Assumption~\ref{assump::main} in order to match the error terms in Lemma~\ref{lem::main}.

\begin{proof}[Proof of Proposition~\ref{prop::start}]
We first observe that
\[ \p_x\left[\CC_S(t) \leq \tu(\CG) \right] \leq (1+(2e)^{-1}) \p_{\pi}[ \CC_S(t-\tu(\CG)) \leq \tu(\CG)].\]

With $\wt{t}= 2 K_2 \tu(\CG)/ \pi(S)$ and using $|S| \geq 2 K_2 \tu(\CG) / (G(x,y)-1)$ for $x,y \in V(\CG)$ adjacent, we see that
\[G(x,y) - 1 \leq q(\wt{t})\leq \frac{5}{2}(G(x,y)-1).\]
Combining this with part \eqref{assump::main::green_exponent} of Assumption~\ref{assump::main} implies
\begin{equation}
 \label{errorprob} \frac{\wt{t} \pi(S) q^{K_2}(\wt{t})}{\tu(\CG)} \leq 2K_2 q^{K_2}(\wt{t}) \leq \exp\left(- \frac{\tu(\CG)}{\trel(\CG)} \right).
\end{equation}
Applying Lemma~\ref{lem::main} gives the result.
\end{proof}

\subsection{Small Sets}
\label{subsec::small_estimates}

We will now give an upper bound on the rate at which $X$ covers $1/2$ the elements of a set of vertices $S \subseteq V(\CG)$, provided $|S|$ is sufficiently small.

\begin{proposition}
\label{prop::cover_small_set}
Fix $S \subseteq V(\CG)$, let $s = |S|$, and assume that
\[ \tu(\CG) \leq \frac{|\CG|}{4s}.\]
There exists constants $C_2,C_3 > 0$ such that
\[ \p_x \left[ \CC_S(C_2 |\CG| G^*(s)) \leq \frac{s}{2} \right] \leq \exp(-C_3 s)\]
for all $x \in V(\CG)$.
\end{proposition}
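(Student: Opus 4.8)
The goal is to show that once $X$ has run for a time comparable to $|\CG| G^*(s)$, it has covered at least half of the small set $S$, except with probability exponentially small in $s=|S|$. Since $S$ is small, we cannot use the local-time deconcentration machinery of Section~\ref{subsec::large_estimates}; instead the natural approach is a second-moment (Chebyshev) argument on the number of \emph{uncovered} vertices $|\CU_S(t)|$, combined with the hitting-time estimates that control how quickly $X$ reaches an individual vertex of $S$.

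\begin{proof}[Proof sketch]
First I would record a one-vertex hitting estimate: by the hypothesis $\tu(\CG) \leq |\CG|/(4s)$ and a standard application of \eqref{eqn::simple_hitting_bound} together with the Green's-function bound, for any $z \in V(\CG)$ and any starting point the expected time $X$ spends at $z$ up to the uniform mixing time is $G(x,z) \le G^*(1)$, while past time $\tu(\CG)$ the chain is within a factor $(1+(2e)^{-1})$ of stationarity, so over a window of length $L = C_2|\CG| G^*(s)$ the expected number of visits to $z$ is at least a constant multiple of $L/|\CG| \gtrsim G^*(s)$. Dividing the expected occupation time at $z$ by the maximal expected number of returns (which is $O(G^*(1))$, hence $O(G^*(s))$) gives a lower bound $\p_x[\tau_z \le L] \ge c$ for a universal $c>0$, uniformly over $z$ and over the starting point. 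This is the step that uses the assumption $\tu(\CG) \le |\CG|/(4s)$: it guarantees that $L$ is large enough relative to $|\CG|$ that the walk genuinely has a constant chance of hitting each site, yet the bad ``return'' contributions are still controlled by $G^*(s)$.

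Next I would run a first-moment/second-moment comparison. Write $\CU_S(L) = \sum_{z\in S}\one_{\{\tau_z > L\}}$. By the previous step, $\E_x[|\CU_S(L)|] \le (1-c)s$. For the second moment, I would estimate $\p_x[\tau_z > L,\ \tau_w > L]$ for distinct $z,w\in S$: restart the walk at time $L/2$ and use the Markov property together with the one-vertex bound applied on the two windows $[0,L/2]$ and $[L/2,L]$, so that the events of \emph{not} hitting $z$ (resp.\ $w$) in disjoint windows are close to independent up to the usual mixing error, yielding $\p_x[\tau_z>L,\tau_w>L] \le (1-c')^2 + (\text{small})$. Summing gives $\var_x(|\CU_S(L)|) \le \eta s^2$ with $\eta$ small once $C_2$ is large, whence by Chebyshev $\p_x[|\CU_S(L)| \ge s/2] \le 4\eta$. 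A plain Chebyshev bound only gives a constant, not $e^{-C_3 s}$; to upgrade to the exponential rate I would instead break the window $[0,L]$ into $\Theta(s)$ consecutive sub-windows each of length $\Theta(|\CG| G^*(s)/s)$ --- long enough that, by the one-vertex estimate above, each \emph{currently uncovered} vertex of $S$ is hit during a given sub-window with probability at least some fixed $c>0$ conditionally on the past --- and then stochastically dominate the number of still-uncovered vertices after all sub-windows by a sum of independent Bernoulli thinnings. A union bound / Chernoff estimate over this domination gives that $|\CU_S(L)|$ exceeds $s/2$ with probability at most $\exp(-C_3 s)$.

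The main obstacle is the second, ``exponential'' step: making the conditional-independence argument rigorous requires carefully controlling, at the start of each sub-window, the fact that the walk is not at stationarity and that the conditioning on the trajectory so far could in principle correlate with the positions of the remaining uncovered vertices. The clean way around this is to use the strong Markov property at the start of each sub-window and the \emph{worst-case} one-vertex hitting bound (which is uniform over the starting point), so that the per-sub-window success probability $c$ is genuinely a deterministic lower bound independent of the past; then the stochastic-domination-by-independent-thinnings step is legitimate and the Chernoff bound closes the argument. The choice of $C_2$ is dictated by needing each sub-window to be at least a constant multiple of $|\CG| G^*(1)$ (so the one-vertex bound kicks in) while still fitting $\Theta(s)$ of them into $[0,L]$; since $G^*(s)\ge G^*(1)$ this is consistent, and $C_3$ comes out of the resulting Chernoff exponent.
\end{proof}
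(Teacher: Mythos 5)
There is a genuine gap in the step that upgrades the Chebyshev bound to an exponential one. You split $[0,L]$ with $L = C_2|\CG|G^*(s)$ into $\Theta(s)$ sub-windows of length $\ell = \Theta(|\CG|G^*(s)/s)$ and claim that each still-uncovered vertex is hit in each sub-window with a \emph{fixed} probability $c>0$. Your own occupation-time calculation shows why this fails: over a window of length $\ell$ the expected time spent at a fixed vertex $z$ is $O(\ell/|\CG|) = O(G^*(s)/s)$, while the conditional expected occupation given $\tau_z\le\ell$ is at least $G(z,z)\ge 1$, so the per-vertex hitting probability in one sub-window is only $O(G^*(s)/s)$. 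This is bounded below by a constant only when $G^*(s)\gtrsim s$, essentially the recurrent regime; for the graphs the proposition is aimed at (e.g.\ $\Z_n^d$ with $d\ge3$, where $G^*(s)\asymp s^{2/d}$) one has $G^*(s)/s\to0$. Your remark that ``since $G^*(s)\ge G^*(1)$ this is consistent'' only yields $\ell \ge C_2|\CG|G^*(1)/s$, not the needed $\ell\gtrsim|\CG|G^*(1)$; the two requirements (sub-windows long enough for a constant per-vertex hitting probability, and $\Theta(s)$ of them fitting inside $[0,L]$) are genuinely incompatible in general.

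The fix --- and the paper's route --- is to track hitting of the \emph{remaining uncovered set} rather than of individual vertices. Lemma~\ref{lem::hit_small_set} shows that from any starting point the walk hits a set of size at least $s/2$ within $|\CG|/s$ steps with probability at least $\rho_0/G^*(s)$; this is exactly your ``expected occupation over conditional expected occupation'' ratio, but with the denominator bounded by $G^*(s)$ because the returns are summed over the whole set, and it is here that the hypothesis $\tu(\CG)\le|\CG|/(4s)$ enters. One then runs $C_2 sG^*(s)$ rounds of length $|\CG|/s$ (total time $C_2|\CG|G^*(s)$); as long as fewer than $s/2$ vertices of $S$ have been covered, each round removes at least one vertex with probability at least $\rho_0/G^*(s)$ uniformly in the past, so the number of covered vertices stochastically dominates a ${\rm BIN}(C_2 sG^*(s),\rho_0/G^*(s))$ variable with mean $C_2\rho_0 s$, and Chernoff gives the $\exp(-C_3 s)$ bound. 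Note also that your first-moment step, while salvageable, misstates the maximal expected number of returns over $[0,L]$ as $O(G^*(1))$; it is $O(G^*(1)+L/|\CG|)=O(G^*(s))$, which happens to be the bound you want but for a different reason.
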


The main step in the proof of Proposition~\ref{prop::cover_small_set} is the next lemma, which gives an upper bound on the hitting time for $S$.  Its proof is based on the following observation.  Suppose that $S \subseteq V(\CG)$ and $\tau_S = \min\{t \geq 0 : X(t) \in S\}$.  Let $Z$ be a non-negative random variable with $Z \one_{\{\tau_S > t\}} = 0$ and $\E_x[Z \one_{\{\tau_S \leq t\}}] > 0$.  Then we have that
\begin{equation}
\label{eqn::green_kernel_hitting_estimate}
 \p_x[ \tau_S \leq t] = \frac{\E_x[Z]}{\E_x[Z|\tau_S \leq t]}.
\end{equation}
We will take $Z$ to be the amount of time $X$ spends in $S$.

\begin{lemma}
\label{lem::hit_small_set}
Fix $S \subseteq V(\CG)$ and let $s = |S|$.  Assume that
\[ \tu(\CG) \leq \frac{|\CG|}{2s}.\]
There exists a universal constant $\rho_0 > 0$ such that $x \in V(\CG)$ we have
\[ \p_x\left[ \tau_S \leq \frac{|\CG|}{s}\right] \geq \frac{\rho_0}{G^*(s)}.\]
\end{lemma}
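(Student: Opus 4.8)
The plan is to apply the identity \eqref{eqn::green_kernel_hitting_estimate} with $Z$ equal to the local time that $X$ spends in $S$ up to a suitable horizon, namely $Z = \CL_S(T)$ where $T = |\CG|/s$. One then needs a lower bound on the numerator $\E_x[Z] = \E_x[\CL_S(T)]$ and an upper bound on the conditional expectation $\E_x[Z \mid \tau_S \le T]$ appearing in the denominator.

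For the denominator, I would use the strong Markov property at time $\tau_S$: conditioned on $\{\tau_S \le T\}$ and on $X(\tau_S) = z \in S$, the additional time spent in $S$ during $[\tau_S, T]$ is at most the total time the walk started from $z$ spends in $S$ before $\tu(\CG)$, provided $T \le \tu(\CG)$ — but we don't have that, so instead I would break $[0,T]$ into $\lceil T/\tu(\CG)\rceil$ blocks of length $\tu(\CG)$ and use that within each block the expected time in $S$ started from any vertex is at most $\max_{z\in S}\sum_{y\in S} G(z,y) \le G^*(s)$ (here using $|S| = s$ and the definition \eqref{eqn::g_star_definition}, together with the fact that after $\tu(\CG)$ the walk is within a factor $1+(2e)^{-1}$ of stationary, so each later block contributes at most a comparable amount). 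Since $T = |\CG|/s$ and, by hypothesis, $\tu(\CG) \le |\CG|/(2s)$, the number of blocks is bounded by a universal constant, so $\E_x[\CL_S(T) \mid \tau_S \le T] \le C\, G^*(s)$ for a universal $C$; this is the step I expect to require the most care, since one must correctly handle the post-mixing blocks and verify that the stationary contribution $T\pi(S) = |\CG|\pi(S)/s$ is also $O(G^*(s))$ (which holds because $G^*(s) \ge \sum_{y\in S} G(z,z) \ge$ a constant, and more to the point $G(z,y) \ge P^0(z,y)$ gives $G^*(s)\ge 1$, while the stationary mass argument gives the matching scale).

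For the numerator, I would bound $\E_x[\CL_S(T)]$ from below by first waiting until time $\tu(\CG)$ — at which point the walk is approximately stationary, so $P^{\tu(\CG)}(x,\cdot) \ge (1-(2e)^{-1})\pi(\cdot)$ pointwise — and then summing: for $t \in [\tu(\CG), T]$ we have $P^t(x,S) \ge (1-(2e)^{-1})\pi(S)$ by the semigroup property and stationarity of $\pi$. Hence $\E_x[\CL_S(T)] \ge (1-(2e)^{-1})\pi(S)\,(T - \tu(\CG)) \ge (1-(2e)^{-1})\pi(S)\cdot \tfrac{1}{2}\cdot \tfrac{|\CG|}{s}$, again using $\tu(\CG) \le |\CG|/(2s)$; since $\pi(S) = s/|\CG|$ for vertex-transitive $\CG$, this is bounded below by a universal constant. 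Dividing the two estimates via \eqref{eqn::green_kernel_hitting_estimate} gives $\p_x[\tau_S \le T] \ge \rho_0/G^*(s)$ for a universal $\rho_0 > 0$, as claimed. The main obstacle, as noted, is the clean control of the denominator across the (constantly many) time-blocks past $\tu(\CG)$; everything else is a direct application of the uniform-mixing bound and the definition of $G^*$.
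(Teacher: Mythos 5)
Your overall strategy is the same as the paper's: apply \eqref{eqn::green_kernel_hitting_estimate} with $Z = \CL_S(T)$, $T = |\CG|/s$, bound the numerator below by a constant using near-stationarity after time $\tu(\CG)$ (your computation $(1-(2e)^{-1})\pi(S)(T-\tu(\CG)) \geq (1-(2e)^{-1})/2$ matches the paper's bound of $1/4$), and bound the conditional expectation in the denominator by $O(G^*(s))$. However, your treatment of the denominator contains a genuine error: you claim that since $\tu(\CG) \leq |\CG|/(2s)$, the number of blocks $\lceil T/\tu(\CG)\rceil$ is bounded by a universal constant. The hypothesis gives $T/\tu(\CG) \geq 2$, which is a \emph{lower} bound on the number of blocks, not an upper bound; if $\tu(\CG)$ is much smaller than $|\CG|/s$ there can be arbitrarily many blocks, and a bound of the form (number of blocks)$\times G^*(s)$ is then useless.

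The correct completion --- which is what the paper does, and which you partially anticipate in your parenthetical about the stationary contribution --- is to split the denominator into only two pieces: the window $[\tau_S, \tau_S + \tu(\CG)]$, whose conditional expectation is at most $\max_{z\in S}\sum_{y\in S} G(z,y) \leq G^*(s)$ by the strong Markov property at $\tau_S$, and the entire remaining interval $[\tau_S+\tu(\CG)+1, T]$, whose conditional expectation is at most $(1+(2e)^{-1})\,\E_\pi[\CL_S(T)] = (1+(2e)^{-1})\,T\,\pi(S) \cdot(1+o(1)) \leq 2$ in one shot, because after $\tu(\CG)$ additional steps the law of the walk is within a factor $1+(2e)^{-1}$ of $\pi$ and $T\pi(S) = (|\CG|/s)(s/|\CG|) = 1$. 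Since $G^*(s) \geq G(z,z) \geq 1$, the denominator is at most $G^*(s) + 2 \leq 3G^*(s)$, and the lemma follows. So the gap is repairable with an observation you already made, but as written the block-counting step is false and must be replaced by this single post-mixing estimate.
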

\begin{proof}
Let us introduce $E=\left\{\tau_S \leq \frac{|\CG|}{s}\right\} $. Observe that
\[ \p_x[ E] \geq \frac{\E_x\left[\CL_S\left(\frac{|\CG|}{s}\right)\right]}{\E_x\left[\CL_S\left(\frac{|\CG|}{s}\right)| E\right]}\]
We can bound the numerator from below as follows:
\begin{align}
 \E_x\left[ \CL_S\left(\frac{|\CG|}{s}\right) \right]
&\geq (1-(2e)^{-1}) \E_\pi\left[ \CL_S\left(\frac{|\CG|}{s} - \tu(\CG)\right) \right] \notag\\
&\geq (1-(2e)^{-1}) \pi(S) \left(\frac{|\CG|}{s}-\tu(\CG)\right) \geq \frac{1}{4}.
\label{eqn::num_hit_small_set}
\end{align}
Let $\CL_S(u,t) = \CL_S(t) - \CL_S(u-1)$ be the number of times in the set $\{u,\ldots,t\}$ that $X$ spends in $S$.  Then we can express the denominator as the sum
\begin{align*}
  &\E_x\left[\CL_S\left(\tau_S,\tau_S + \tu(\CG)\right) | E \right] + \E_x\left[\CL_S\left(\tau_S+\tu(\CG)+1,\frac{|\CG|}{s}\right) | E \right]\\
  =: &D_1 + D_2.
\end{align*}
We have
\[ D_2 \leq (1+(2e)^{-1}) \E_\pi\left[\CL_S\left(\frac{|\CG|}{s}\right) \right] \leq 2.\]
We will now bound $D_1$.  By the strong Markov property, we have that
\begin{align*}
	    D_1
&\leq \max_{z \in S} \E_z[\CL_S(\tu(\CG))]
  = \max_{z \in S} \E_z \sum_{t=0}^{\tu(\CG)} \one_{\{X(t) \in S\}} \\
&= \max_{z \in S} \sum_{y \in S} G(z,y)
 \leq G^*(s).
\end{align*}
Putting everything together completes the proof.
\end{proof}

The remainder of the proof of Proposition~\ref{prop::cover_small_set} is based on a simple stochastic domination argument.

\begin{proof}[Proof of Proposition~\ref{prop::cover_small_set}]
Let $C_2 > 0$; we will fix its precise value at the end of the proof.  That $X$ visits at least $s/2$ points in $S$ by the time $C_2 |\CG| G^*(s)$ with probability exponentially close to $1$ in $s$ follows from a simple large deviation estimate of a binomial random variable.  Namely, we run the chain for $C_2 G^*(s) s$ rounds, each of length $|\CG|/s$.  We let $S_0 = S$ and inductively let $S_i = S_{i-1} \setminus \{x\}$ if $X$ hits $x$ in the $i$th round for $i \geq 1$.  If $|S_i| \geq s/2$, the hypotheses of Lemma~\ref{lem::hit_small_set} hold.  In this case, the probability that $X$ hits a point in $S_i$ in the $i$th round is at least $\rho_0/G^*(s) > 0$.
Thus by stochastic domination, we have that
\[ \p[\CC_S(C_2 |\CG| G^*(s) )< s/2] \leq \p\left[ Z < s/2 \right] \]
where $Z\sim {\rm BIN}(C_2 G^*(s)s, \rho_0/G^*(s))$. By picking $C_2$ large enough ($C_2 > 1/\rho_0$ will do, say) and applying the Chernoff bound, we see that
\begin{equation}
\label{error2} \p\left[ \CC_S(C_2 |\CG| G^*(s)) < s/2\right] \leq \exp(-C_3 s)
\end{equation}
for some constant $C_3$ (one can check that $C_3=\tfrac{1}{8}$ suffices).  This estimate also holds if $s=1$. In this case we cover the point with constant probability in $C_2 |\CG|$ steps.
\end{proof}

\section{Proof of Theorem~\ref{thm::main}}
\label{sec::proof_of_main}

Throughout this section, we shall assume that $X$ is a lazy random walk on a graph $\CG$ which satisfies Assumption~\ref{assump::main}.  Recall that $\CU(t)$ is the set of vertices of $\CG$ which $X$ has not visited by time $t$.  We will use the notation $\p_x,\E_x$ for the probability measure and expectation under which $X(0) = x$.  Likewise, we let $\p_\pi,\E_\pi$ correspond to the case that $X$ is initialized at stationarity.  We will now work towards completing the proof of Theorem~\ref{thm::main} by applying the results of the previous section to describe the process by which $X$ covers $V(\CG)$.  We will study the process of coverage in two different regimes: before and after $\CU(t)$ contains at least $n^*$ vertices (recall the definition of $n^*$ from part~\eqref{assump::main::green_ratio} of Assumption~\ref{assump::main}).  To this end, we let
\begin{align*}
 r&=\max\{i: |\CG| - i\tu(\CG) \geq n^*\}, \\
\wt r&= \lfloor\log_2 (|\CG|- r \tu(\CG)) \rfloor
\end{align*}
and
\begin{align*}
  s_i&= |\CG|- i \tu(\CG), \quad i=0,\dots, r,\\
 s_{r+i}&=\left\lfloor \frac{s_r}{2^i} \right\rfloor \quad i=1,\dots, \wt r-1,\\
 s_{r+\wt r}&=0.
\end{align*}

We also define the stopping times
\[ T_i = \min\{ t \geq 1: \mathcal |\CU(t)|\le s_i\},\ \ i= 1,\dots, r+\wt r.\]

\begin{lemma}
\label{lem::large_decimation}
There exists constants $C_4,C_5$ such that for each $1 \leq i \leq r$ and all $x \in V(\CG)$, we have that
\begin{equation}
 \label{eqn::large_decimation}
 \p_x\left[|\CU(t)|> s_i\right]
 \leq
 \exp\left(\frac{s_i}{\trel(\CG)} \left(C_4 \log |\CG| - \frac{C_5}{|\CG|} t\right) \right).
\end{equation}
\end{lemma}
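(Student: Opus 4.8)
The plan is to iterate the coverage estimate for large sets (Proposition~\ref{prop::start}) across the successive "rounds" of decimation indexed by $i = 1, \dots, r$. The key observation is that if $|\CU(t)| > s_i$, then in particular $|\CU(t)| > s_i \geq n^*$ at every stage $i \leq r$, so the uncovered set is always large enough that Proposition~\ref{prop::start} applies to it. First I would set up a round structure: partition the time interval $[0,t]$ into roughly $m = C_5 t / (|\CG| (\trel(\CG) + \log|\CG|))$ blocks, each of length a constant multiple of $|\CG|(\trel(\CG)+\log|\CG|)/s_i \asymp \tu(\CG^\diamond)/s_i$ (using the target scaling from Theorem~\ref{thm::main}); this is tuned so that in each block, conditioned on the current uncovered set $\CU$ having size between $s_i$ and $s_{i-1}$, the probability that $X$ fails to visit $\tu(\CG)$ fresh vertices of $\CU$ is at most $\exp(-C\tu(\CG)/\trel(\CG))$ by Proposition~\ref{prop::start} applied with $S = \CU$ (note $\pi(S) = |S|/|\CG| \geq s_i/|\CG|$ since the graph is vertex transitive, so the block length dominates $2(K_2+2)\tu(\CG)/\pi(S)$).

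Next I would run the following supermartingale-style / stochastic-domination argument. Starting from any $x$, the event $\{|\CU(t)| > s_i\}$ forces that $\CU$ was never reduced from a value $> s_{i-1}+\tu(\CG)$ down to $\le s_i$ within the allotted blocks --- more precisely, since each block that "succeeds" removes at least $\tu(\CG)$ vertices, to go from $|\CG|$ down to $s_i = |\CG| - i\tu(\CG)$ requires at least $i$ successful blocks, but to still have $|\CU(t)| > s_i$ after $m$ blocks we need fewer than $i$ successes among $m$ attempts, each attempt failing with probability at most $p := \exp(-C\tu(\CG)/\trel(\CG))$ by the strong Markov property at the start of each block (the conditional failure probability is bounded uniformly over the starting vertex, which is exactly the form of Proposition~\ref{prop::start}). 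Hence $\p_x[|\CU(t)| > s_i] \le \p[\mathrm{Bin}(m,p) < i] \le \binom{m}{i} p^{\,m-i+1} \cdot (\text{const})$, or more simply $\le \binom{m}{m-i+1} p^{m-i+1}$. I would then bound this binomial tail crudely by $(emp)^{m-i+1}$ and absorb: since $s_i \le |\CG|$ gives $i \le |\CG|/\tu(\CG)$, and choosing $C_5$ small enough that $m - i + 1 \ge C_5' t s_i /(|\CG|\trel(\CG)) - (\text{lower order})$ and $emp \le \exp(-\tfrac{C}{2}\tu(\CG)/\trel(\CG)) \le e^{-1}$ once $t \ge C_4' |\CG|\log|\CG|$, the product telescopes into the claimed bound $\exp\!\big(\tfrac{s_i}{\trel(\CG)}(C_4\log|\CG| - \tfrac{C_5}{|\CG|}t)\big)$; the $C_4 \log|\CG|$ term is there precisely to swamp the $\binom{m}{i}$ entropy factor and to handle small $t$ (where the bound is trivially $\ge 1$).

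The main obstacle I expect is the bookkeeping that makes the exponent come out proportional to $s_i$ rather than to $\tu(\CG)$ or to $|\CG|$: one must be careful that the number of required successes $i$ is comparable to $(|\CG| - s_i)/\tu(\CG)$ while the exponent we want scales with $s_i$, so the estimate has to exploit that the block length is $\asymp \tu(\CG^\diamond)/s_i$ (inversely proportional to $s_i$) --- i.e., as $\CU$ shrinks the blocks get longer, which is what keeps each block's failure probability small even though fewer vertices remain. A secondary subtlety is reducing to the stationary initial condition inside each block: since a block has length $\gg \tu(\CG)$, one can discard the first $\tu(\CG)$ steps of the block as in the proof of Proposition~\ref{prop::start} (the estimate there already starts from an arbitrary $x$, so this may be folded in directly), at the cost of a harmless $(1+(2e)^{-1})$ factor per block that is absorbed into constants. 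Everything else --- the binomial tail bound, the union over which block is "the first to fail to accumulate enough successes" --- is routine.
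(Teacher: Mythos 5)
Your overall strategy---iterating Proposition~\ref{prop::start} with $S=\CU$ via the strong Markov property and a geometric/binomial domination---is the right one and matches the paper's. But the concrete scheme you set up has a genuine gap, and it is exactly the ``main obstacle'' you flag at the end without resolving. You partition $[0,t]$ into blocks of a \emph{single} length $L_i \asymp \tu(\CG)|\CG|/s_i$, tuned to the final target size $s_i$, and then demand $i$ successful blocks out of $m = t/L_i$. Since the number of required successes is $i = (|\CG|-s_i)/\tu(\CG)$, you need at the very least $m \ge i$, i.e.\ $t \gtrsim |\CG|(|\CG|-s_i)/s_i$. For $i$ near $r$ one has $s_i$ near $n^*$, so this forces $t \gtrsim |\CG|^2/n^*$, which for $\Z_n^d$ with $d \ge 5$ is of order $n^{2d-2}$---vastly larger than the scale $t \asymp |\CG|(\trel(\CG)+\log|\CG|) \asymp d\, n^{d+2}$ at which the lemma must be applied in Lemma~\ref{lem::exp_moment}. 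In that regime your argument returns only the trivial bound $\p_x[|\CU(t)|>s_i]\le 1$. Even when $m \ge i$, the entropy factor $\binom{m}{i-1}$ contributes roughly $i\log(em/i)$ to the exponent, and since $i$ can be of order $|\CG|/\tu(\CG)$ while the budget is only $C_4 s_i\log|\CG|/\trel(\CG)$, the $C_4\log|\CG|$ term does not ``swamp'' it once $s_i \ll |\CG|$.

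The paper avoids this by making the time scale \emph{stage-dependent}: the increment $T_j-T_{j-1}$ is stochastically dominated by $t_j Z_j$ with $t_j = 2(K_2+2)\tu(\CG)|\CG|/s_j$ and $Z_j$ geometric, so the total time is governed by the harmonic sum $\sum_{j\le i} t_j = 2(K_2+2)\tu(\CG)|\CG|\sum_{j\le i}s_j^{-1} = O(|\CG|\log|\CG|)$ rather than by $i\cdot t_i \asymp |\CG|^2/s_i$. One then applies a single exponential Chebyshev bound with tilt $\theta_i \asymp s_i/(|\CG|\trel(\CG))$; the key point is that $\theta_i t_j \asymp (\tu(\CG)/\trel(\CG))(s_i/s_j)$ discounts the early stages (where $s_j \gg s_i$), which is what makes the exponent come out proportional to $s_i$ at the cost of only a $\log|\CG|$ term. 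If you want to keep a block-counting picture you must let the block length grow as $\CU$ shrinks (as you hint with ``as $\CU$ shrinks the blocks get longer''), but then the blocks are no longer exchangeable and the binomial tail bound has to be replaced by the weighted Chernoff computation just described.
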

\begin{proof}
For each $i \in \{1,\ldots,r\}$, we let
\[ t_i = \frac{2(K_2+2) \tu(\CG) |\CG|}{s_i}\]
Proposition~\ref{prop::start} implies that
\[ \p_x[|\CU(t+t_i)| \leq s_{i+1}\ |\ |\CU(t)| \in (s_{i+1},s_i] ]
   \geq 1- \exp\left(- C \frac{\tu(\CG)}{\trel(\CG)} \right).\]
Consequently, it follows that there exists independent variables $Z_j \sim {\rm GEO}(1-\exp(-C  \tu(\CG)/\trel(\CG)))$ such that $T_j - T_{j-1}$ is stochastically dominated by $t_j Z_j$ for all $j \in \{1,\ldots,r\}$.  Thus for $\theta_i > 0$, we have that
\begin{align}
   \p_x[ |\CU(t)| > s_i]
&= \p_x[T_i > t] = \p_x\left[ \sum_{j=1}^i T_j - T_{j-1} > t \right] \notag\\
&\leq e^{-\theta_i t} \prod_{j=1}^i \E_x[ e^{\theta_i t_j Z_j}]. \label{eqn::large_decim_exp}
\end{align}
Note that for every $\beta \in (0,1)$ there exists $\alpha = \alpha(\beta) > 0$ such the moment generating function of a ${\rm GEO}(p)$ random variable satisfies
\begin{equation}
\label{eqn::geo_exp_moment}
 \frac{pe^x}{1-(1-p) e^x} \leq e^{\alpha x} \text{ provided } (1-p) e^x \leq \beta.
\end{equation}
Choosing
\[ \theta_i = \frac{C \tu(\CG)}{2 t_i \trel(\CG)}\]
we have that
\[ \theta_i t_j = \frac{C \tu(\CG)}{\trel(\CG)} \cdot \frac{t_j}{t_i} = \frac{C \tu(\CG)}{2 \trel(\CG)} \cdot \frac{s_i}{s_j}.\]
Hence as $s_i \leq s_j$ for all $i,j \in \{1,\ldots,r\}$ with $j \leq i$, we have
\[ \exp\left(\frac{C \tu(\CG)}{2 \trel(\CG)} \cdot \frac{s_i}{s_j} - \frac{ C \tu(\CG)}{\trel(\CG)} \right) \leq \exp\left(- \frac{C \tu(\CG)}{2 \trel(\CG)} \right) \leq \exp(-C/2).\]
Let $\alpha = \alpha(e^{-C/2})$ as in \eqref{eqn::geo_exp_moment}.  Consequently, we can bound the product of exponential moments in \eqref{eqn::large_decim_exp} by
\begin{align*}
\log \prod_{j=1}^i \E_x[e^{\theta_i t_j Z_j}]
 &\leq \alpha \sum_{j=1}^i \theta_i t_j
 =  \frac{\alpha C \tu(\CG) s_i}{2 \trel(\CG)} \sum_{j=1}^i \frac{1}{s_j}\\
 &=  \frac{\alpha C  s_i}{2 \trel(\CG)} \sum_{j=1}^i \frac{1}{|\CG|/\tu(\CG) - j}
 \leq \frac{\alpha C  s_i}{2\trel(\CG)} \log|\CG|.
\end{align*}
Inserting this expression into \eqref{eqn::large_decim_exp} gives \eqref{eqn::large_decimation}.
\end{proof}

\begin{lemma}
\label{lem::small_decimation}
There exists constants $C_6,C_7$ such that for all $1 \leq i \leq \wt{r}$ and $x \in V(\CG)$, we have that
\begin{align}
 \label{eqn::small_decimation}
 &\p_x\left[|\CU(t)| > s_{r+i} \right]\\
 \leq&
  \p_x[|\CU(t/2)| > s_r]
   + \exp\left(s_{r+i-1} \left(C_6 i - \frac{C_7}{|\CG| G^*(n^*)} t \right) \right). \notag
\end{align}
\end{lemma}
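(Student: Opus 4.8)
The plan is to follow the scheme of the proof of Lemma~\ref{lem::large_decimation}, but with the small-set coverage estimate Proposition~\ref{prop::cover_small_set} playing the role of Proposition~\ref{prop::start}, and with an initial conditioning at time $t/2$ that reduces matters to a configuration of the walk having at most $s_r$ uncovered vertices. Since $|\CU(\cdot)|$ is non-increasing, $\{|\CU(t/2)|>s_r\}=\{T_r>t/2\}$, so the strong Markov property applied at $T_r$ gives
\[ \p_x\big[|\CU(t)|>s_{r+i}\big]\ \le\ \p_x\big[|\CU(t/2)|>s_r\big]\ +\ \sup_{\xi}\p_\xi\big[|\CU(t/2)|>s_{r+i}\big],\]
where the supremum ranges over all initial configurations $\xi$ (position together with uncovered set) having at most $s_r$ uncovered vertices, and the first summand is precisely the first term of \eqref{eqn::small_decimation}. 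It remains to bound the supremum.

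Set $L:=C_2|\CG|\,G^*(n^*)$. Starting from a configuration with at most $s_r$ uncovered points, decompose the coverage process into $i$ successive stages, stage $k$ ($k=1,\dots,i$) being run until $|\CU|\le s_{r+k}$. For $k<\wt r$ one has $s_{r+k}=\lfloor s_{r+k-1}/2\rfloor$, so covering more than half of the current uncovered set — whose size is at most $s_{r+k-1}\le s_r$, and for which $G^*(\cdot)=O(G^*(n^*))$ by monotonicity (and near-subadditivity) of $G^*$ — in a single round of length $L$ already brings the walk to level $k$; by Proposition~\ref{prop::cover_small_set} such a round succeeds with probability at least $1-\exp(-C_3 s_{r+k})$. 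The terminal stage ($k=i=\wt r$) instead covers a set of bounded size outright, which Proposition~\ref{prop::cover_small_set} (applied to that bounded set) accomplishes in one round with at least a universal constant probability. Consequently, by the strong Markov property and the uniformity over starting points in Proposition~\ref{prop::cover_small_set}, the number of rounds used by stage $k$ is stochastically dominated by an independent ${\rm GEO}(p_k)$ variable $W_k$ with $p_k\ge 1-\exp(-C_3 s_{r+k})$ (and $p_k$ bounded below by a universal constant in the terminal stage), the total coverage time is dominated by $L\sum_{k=1}^{i}W_k$, and hence the supremum above is at most $\p\big[\sum_{k=1}^i W_k>t/(2L)\big]$. (At this point one must also check that the hypothesis $\tu(\CG)\le|\CG|/(4\cdot\mathrm{size})$ of Proposition~\ref{prop::cover_small_set} is in force at every level.)

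The conclusion then follows from a Chernoff bound. For $\theta>0$ with $(1-p_k)e^{\theta}\le 1-\delta$ for every $k\le i$ and some fixed $\delta\in(0,1)$ — which holds once $\theta\le\tfrac14 C_3 s_{r+i}$, since $s_{r+k}\ge s_{r+i}$ for $k\le i$ — the moment generating function of ${\rm GEO}(p_k)$ is at most $\delta^{-1}e^{\theta}$, so
\[ \p\Big[\sum_{k=1}^i W_k>\frac{t}{2L}\Big]\ \le\ e^{-\theta t/(2L)}\big(\delta^{-1}e^{\theta}\big)^i\ =\ \exp\Big(i\log(1/\delta)+i\theta-\frac{\theta\,t}{2L}\Big).\]
Taking $\theta=\tfrac14 C_3 s_{r+i}$, bounding $i\log(1/\delta)\le \log(1/\delta)\,i\,s_{r+i}$, and using $s_{r+i}\le s_{r+i-1}\le 4s_{r+i}$ together with $L\asymp|\CG|G^*(n^*)$ turns the exponent into $s_{r+i-1}\big(C_6 i-C_7 t/(|\CG|G^*(n^*))\big)$ for suitable $C_6,C_7>0$; the terminal case $i=\wt r$ (where $s_{r+\wt r}=0$) is handled with $\theta$ a universal constant and gives the same form since $s_{r+\wt r-1}\ge 1$. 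This is \eqref{eqn::small_decimation}.

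I expect the main difficulty to be the bookkeeping forcing the product of geometric moment generating functions to collapse into a single exponential whose coefficient scales like $s_{r+i-1}$: this pins down $\theta\asymp s_{r+i}$, and one must simultaneously handle the terminal stages — where $s_{r+k}$ is merely $O(1)$ and the one-round success probability is only a fixed constant, not $1-o(1)$ — without spoiling the stochastic domination, and verify that $\tu(\CG)\le|\CG|/(4\cdot\mathrm{size})$ holds at every level, in particular at the first level where the uncovered set may slightly exceed $n^*$ in size (so one leans on the near-subadditivity of $G^*$ to keep the round length $O(|\CG|G^*(n^*))$).
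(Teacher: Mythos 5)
Your proof follows the same route as the paper's: split the event via $\{T_{r+i}>t\}\subseteq\{T_r>t/2\}\cup\{T_{r+i}-T_r>t/2\}$, dominate the number of rounds between successive levels $s_{r+j}$ by geometric random variables using Proposition~\ref{prop::cover_small_set}, and apply a Chernoff bound with $\theta\asymp s_{r+i}/(|\CG|G^*(n^*))$. The only differences are cosmetic --- you use a uniform round length $C_2|\CG|G^*(n^*)$ where the paper uses $C_2|\CG|G^*(s_{r+j})$ and then invokes monotonicity of $G^*$ --- and you are in fact more careful than the paper about the terminal stage $s_{r+\wt r}=0$, where the geometric parameter must be taken as a universal constant rather than $1-\exp(-C_3 s_{r+\wt r})$.
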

\begin{proof}
Let
\[ q_{r+j} = C_2 |\CG| G^*(s_{r+j})\]
where $C_2$ is as in Proposition~\ref{prop::cover_small_set}.  Proposition~\ref{prop::cover_small_set} implies that
\[ \p_x[ |\CU(t+q_{r+j})| \leq s_{r+j+1}\ |\ |\CU(t)| \in (s_{r+j+1},s_{r+j}]] \geq 1- \exp(-C_3 s_{r+j})\]
for $j \in \{1,\cdots, \wt{r}\}$.  Consequently, there exists independent random variables $Z_{r+j} \sim {\rm GEO}(1- \exp(-C_3 s_{r+j}))$ such that $T_{r+j}-T_{r+j-1}$ is stochastically dominated by $q_{r+j} Z_{r+j}$.  We have that
 \begin{align}
 &\p_x[ |\CU(t)| > s_{r+i}] = \p_x[T_{r+i}> t] \notag\\
 \leq& \p_x\left[T_r > \frac{t}{2}\right] + \p_x\left[ \sum_{j=1}^i T_{r+j}-T_{r+j-1}> \frac{t}{2}\right]
  =: I_1 + I_2 \label{eqn::small_dec_bound}
 \end{align}
Using that $I_1 = \p[|\CU(t/2)| > s_r]$ gives the first term in \eqref{eqn::small_decimation}.  We now turn to bound $I_2$.  Fixing $\theta_{r+i} > 0$, we have
 \begin{align}
  I_2
  &\leq e^{-\theta_{r+i} t/2} \prod_{j=1}^{i} \E_x\left[ e^{\theta_{r+i} q_{r+j} Z_{r+j}}\right]. \label{eqn::i2_exp_mom}
\end{align}
With the particular choice
\[ \theta_{r+i} = \frac{C_3}{2 C_2} \frac{s_{r+i}}{|\CG|G^*(n^*)}\]
we have that
\[ \exp( \theta_{r+i} q_{r+j} - C_3 s_{r+j}) \leq \exp(-C_3/2)=: \beta < 1.\]
Here, we used that if $n \leq m$ then $G^*(n) \leq G^*(m)$.  Thus by \eqref{eqn::geo_exp_moment} there exists $\alpha = \alpha(\beta) > 0$ such that we can bound the exponential moments in \eqref{eqn::i2_exp_mom} by
\begin{align*}
 \log \prod_{j=1}^i \E_x\left[ e^{\theta_{r+i} q_{r+j} Z_{r+j}}\right]
\leq \alpha \theta_{r+i} \sum_{j=1}^i q_{r+j}
= \frac{\alpha C_3}{2}  i s_{r+i}
\end{align*}
Inserting this bound into \eqref{eqn::i2_exp_mom} gives the second term in \eqref{eqn::small_decimation}.
\end{proof}

\begin{lemma}
\label{lem::exp_moment}
There are constants $C_8,C_9,C_{10} >0$ such that for
\[ t= (1+a)C_8 |\CG|(\trel(\CG) +\log|\CG|)\]
and every $x \in V(\CG)$ we have
\begin{equation}
\label{eqn::exp_mom}
\E_x\left[2^{|\CU(t)|}\right]\le 1+ C_9 \exp\left( -a C_{10} \log(n^*) \right).
\end{equation}
\end{lemma}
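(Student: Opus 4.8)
The plan is to estimate $\E_x[2^{|\CU(t)|}]$ by splitting according to the size of $|\CU(t)|$ at the relevant intermediate scales $s_i$, and to use the tail bounds of Lemma~\ref{lem::large_decimation} (in the large regime) and Lemma~\ref{lem::small_decimation} (in the small regime) to control each contribution. Writing $t = (1+a) C_8 |\CG|(\trel(\CG) + \log|\CG|)$, I would first observe that
\[
\E_x\left[2^{|\CU(t)|}\right] = 1 + \sum_{i} \left(2^{s_{i-1}} - 2^{s_i}\right)\p_x\left[|\CU(t)| > s_i\right] + (\text{boundary terms}),
\]
or more crudely bound it by $1 + \sum_i 2^{s_{i-1}}\p_x[|\CU(t)| > s_i]$ after noting $|\CU(t)| \le |\CG|$ always and that on $\{|\CU(t)| \le s_{r+\wt r}\} = \{|\CU(t)| = 0\}$ the contribution is exactly $1$. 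So it suffices to show each term $2^{s_{i-1}}\p_x[|\CU(t)| > s_i]$ is at most (a constant times) $\exp(-aC_{10}\log n^*)$ divided by a summable factor in $i$.

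For the large regime $1 \le i \le r$, I would plug $t$ into \eqref{eqn::large_decimation}: the exponent there is $\tfrac{s_i}{\trel(\CG)}(C_4\log|\CG| - \tfrac{C_5}{|\CG|}t)$. Choosing $C_8$ large enough (depending on $C_4, C_5$) makes $\tfrac{C_5}{|\CG|}t \ge 2C_4\log|\CG| + \tfrac{\trel(\CG)}{s_i}\cdot(\text{enough to beat } s_i \log 2)$ — more precisely, since $t \asymp |\CG|(\trel(\CG)+\log|\CG|)(1+a)$, the term $\tfrac{C_5}{|\CG|}t \ge C_5 C_8(1+a)(\trel(\CG)+\log|\CG|)$, so after multiplying by $\tfrac{s_i}{\trel(\CG)}$ we get something of order $s_i(1+a)(1 + \log|\CG|/\trel(\CG))$ which dominates both $s_i \log 2$ (absorbing the $2^{s_{i-1}}$ factor, using $s_{i-1} \le s_i + \tu(\CG)$ and $\tu \le K_1|\CG|$... actually $s_{i-1} = s_i + \tu(\CG)$, so $2^{s_{i-1}} = 2^{\tu(\CG)}2^{s_i}$, and the $2^{\tu(\CG)}$ is handled by the same slack) and leaves an extra factor $a\cdot s_i \cdot (\text{stuff}) \ge a\log n^*$ since $s_i \ge n^*$ for $i \le r$. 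I would need to be a little careful that the $2^{\tu(\CG)}$ factor is genuinely dominated; this uses that $\tu(\CG)/\trel(\CG)$ is controlled — indeed by part~\eqref{assump::main::green_exponent} of Assumption~\ref{assump::main}, $\tu(\CG)/\trel(\CG) \ge \log\big(2K_2(5/2)^{K_2}(G(x,y))^{K_2}\big)$, but what I really want is an upper bound on how $\log|\CG|$ compares, which comes from $\trel(\CG)+\log|\CG|$ appearing in $t$ directly.

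For the small regime $r+1 \le i \le r+\wt r - 1$, I would iterate \eqref{eqn::small_decimation}: the first term is $\p_x[|\CU(t/2)| > s_r]$, which is exactly the $i=r$ case of the large-regime bound applied at time $t/2$ (still giving a bound of the desired form provided $C_8$ is large enough to survive halving $t$), and the second term is $\exp(s_{r+i-1}(C_6 i - \tfrac{C_7}{|\CG|G^*(n^*)}t))$. Here I use part~\eqref{assump::main::green_ratio} of Assumption~\ref{assump::main}: $G^*(n^*) \le K_3(\trel(\CG)+\log|\CG|)/\log n^*$, so $\tfrac{t}{|\CG|G^*(n^*)} \ge \tfrac{(1+a)C_8}{K_3}\log n^*$, which for $C_8$ large beats $C_6 i$ (note $i \le \wt r \le \log_2|\CG| = O(\log n^*$)... wait, $\wt r \approx \log_2 s_r$ and $s_r < n^* + \tu(\CG)$, so $\wt r = O(\log n^*)$, giving $C_6 i = O(\log n^*)$, comparable to the gain — so I need $C_8/K_3$ strictly bigger than the implied constant times $C_6$) and leaves an exponent $\le -s_{r+i-1}(\text{const} + a\cdot\text{const}\cdot\log n^*)$. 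Then $2^{s_{r+i-1}}$ is absorbed by the "const$\cdot s_{r+i-1}$" part, and summing $\exp(-s_{r+i-1} a C_{10}'\log n^*)$ over $i$ — where $s_{r+i-1} = \lfloor s_r/2^{i-1}\rfloor \ge 1$ — gives a geometric-type series bounded by $C_9\exp(-aC_{10}\log n^*)$. I'd also separately dispose of the last step $s_{r+\wt r} = 0$: covering the final $O(1)$... actually $s_{r+\wt r - 1} \in \{1\}$ is possible, so the last term just contributes one more summand of the same form. Combining the large-regime sum (geometric in $i$ since the $s_i$ decrease by $\tu(\CG)$ each step, and each term carries an $\exp(-c\, s_i(1+a)(\ldots))$ with $s_i \ge n^* \ge$ a large constant) and the small-regime sum yields \eqref{eqn::exp_mom}.

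The main obstacle I anticipate is bookkeeping the constants so that a single choice of $C_8$ simultaneously (i) beats the $C_4\log|\CG|$ in the large regime, (ii) survives the halving of $t$ in the $I_1$ term of the small regime, (iii) beats $C_6 i$ with $i$ as large as $\wt r = \Theta(\log n^*)$ in the small regime, and (iv) leaves enough residual exponent — linear in $a$ — to produce the factor $\exp(-aC_{10}\log n^*)$ with the $2^{s_{i-1}}$ prefactors absorbed. The delicate point in (iii) is that the gain $\tfrac{t}{|\CG|G^*(n^*)} \gtrsim C_8(1+a)\log n^*$ and the loss $C_6 i \lesssim C_6 \log n^*$ are the \emph{same order} in $\log n^*$, so this is not automatic from "$C_8$ large" applied naively — one genuinely needs $C_8 \gg C_6 K_3$ (with the implied constant from $\wt r \le c\log n^*$), and then the leftover is $\big(C_8/K_3 - cC_6\big)(1+a)\log n^* - C_6 cC_8 = \Omega\big((1+a)\log n^*\big)$, from which the $a$-dependence $C_{10} = \Omega(1)$ falls out. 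Everything else is a routine chaining of the two decimation lemmas plus Chernoff/geometric-series estimates.
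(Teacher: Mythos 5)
Your proposal follows essentially the same route as the paper: the crude bound $\E_x[2^{|\CU(t)|}]\le 1+\sum_i 2^{s_{i-1}}\p_x[|\CU(t)|>s_i]$, Lemma~\ref{lem::large_decimation} for $i\le r$ with the $2^{\tu(\CG)}$ prefactor absorbed by taking $C_8$ large, Lemma~\ref{lem::small_decimation} for the small regime with condition~\eqref{assump::main::green_ratio} converting $t/(|\CG|G^*(n^*))$ into a multiple of $\log n^*$ that beats $C_6 i$ for $i\le\wt r=O(\log n^*)$, and a geometric summation dominated by the $s_{r+i}=1$ term. Your explicit flagging of the point that the gain and the loss $C_6 i$ are of the same order in $\log n^*$ (so one needs $C_8\gg C_6K_3$, not just ``$C_8$ large'' naively) is a correct and slightly more careful rendering of what the paper dispatches with ``by increasing $C_8$ if necessary.''
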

\begin{proof}
We can write
\[ \E_x\left[ 2^{|\CU(t)|}\right]\le 1+ \sum_{i=1}^{r+\wt r}2^{s_{i-1}}\p\left[|\CU(t)|> s_i\right].  \]

For $i\leq r$, we have that $s_{i-1}= s_i +\tu(\CG)$.  By Lemma~\ref{lem::large_decimation}, we have that
\begin{align*}
2^{s_{i}+\tu(\CG)}\p[|\CU(t)| > t] &\leq \exp\left(  (s_i+\tu(\CG)) \log 2 + \frac{s_i}{\trel(\CG)} \left(C_4 \log|\CG| - \frac{C_5}{|\CG|} t \right) \right).
\end{align*}
By taking $C_8$ (in the statement) large enough, this is in turn bounded from above by
\begin{equation}
\exp \left(-a s_i \left( 1 + \frac{\log |\CG|}{\trel(\CG)} \right) \right). \label{eqn::exp_mom_large_decimation}
\end{equation}

For $r+i\in \{r+1,\dots, r+\wt r\}$ we have from \eqref{eqn::small_decimation} that
\begin{align*}
 &2^{s_{r+i-1}}\p_x[\mathcal |\CU(t)|> s_{r+i} ]  \\
 \leq& 2^{s_{r+i-1}} \p_x[ |\CU(t)| > \tfrac{t}{2}] +
  \exp\left( s_{r+i-1} \left((C_6 + \log 2) i  - \frac{C_7}{|\CG| G^*(n^*)} t \right) \right).
\end{align*}

The first term admits the same bound as \eqref{eqn::exp_mom_large_decimation} with $i=r$, possibly by increasing $C_8$ if necessary.  Using that $i \leq \log_2 |n^*|$, by increasing $C_8$ if necessary, from condition \eqref{assump::main::green_ratio} it is easy to see that the second term admits the bound
\begin{equation}
\label{eqn::exp_mom_small_decimation_2nd_term}
 \exp\left( -a s_{r+i} \frac{\log|\CG| + \trel(\CG)}{G^*(n^*)}  \right).
\end{equation}
Applying condition \eqref{assump::main::green_ratio} again, we see that \eqref{eqn::exp_mom_small_decimation_2nd_term} is bounded from above by
\[ \exp(-a s_{r+i} \log (n^*)).\]

Putting together the estimates we get that for $i \in \{ 1\dots \wt r\}$
\begin{align}
  & 2^{s_{r+i-1}}\p_x[\mathcal |\CU(t)|> s_{r+i} ] \notag\\
 \leq& \exp \left(  - a s_r \left( 1 + \frac{\log |\CG|}{\trel(\CG)} \right) \right) +  \exp\left( -a s_{r+i} \log(n^*) \right) \label{eqn::exp_mom_small_decimation}
\end{align}
Summing \eqref{eqn::exp_mom_large_decimation} and \eqref{eqn::exp_mom_small_decimation} gives \eqref{eqn::exp_mom} (the dominant term in the summation comes from when $s_{r+i} =1$) which proves the lemma.
\end{proof}

\begin{proof}[Proof of Theorem~\ref{thm::main}]
This is a consequence of Lemma~\ref{lem::exp_moment} and the relationship between $\tu(\CG^\diamond)$ and $\E[2^{|\CU(t)|}]$ given in \eqref{eqn::expmom}.
\end{proof}

\bibliographystyle{acmtrans-ims}
\bibliography{lamplighter}

\end{document}